\documentclass[12pt]{article}
\usepackage[left=1in,top=1in,right=1in,bottom=1in,nohead]{geometry}

\usepackage{url,subfig,latexsym,amsmath,amssymb, amsfonts,enumerate,
  epsfig, graphics,times, amsthm,mathtools, bbm}

\usepackage{tikz, pgfplots}
\usetikzlibrary{automata,positioning,arrows, intersections, calc}
\pgfplotsset{compat=1.8}

\usepackage{commath}
\usepackage{wrapfig}
\usepackage{xcolor}
\usepackage{nameref}
\usepackage{listings}
\usepackage[colorlinks]{hyperref}
\usepackage{makeidx}
\usepackage{tabularx}
\usepackage{array}
    \newcolumntype{P}[1]{>{\centering\arraybackslash}p{#1}}
    \newcolumntype{M}[1]{>{\centering\arraybackslash}m{#1}}

     \providecommand{\classification}[1]{\textbf{AMS subject classifications:} #1}
\usetikzlibrary {graphs}
\usepackage{chemfig}
\newtheorem{thm}{Theorem}[section]
\newtheorem{cor}[thm]{Corollary}
\newtheorem{lem}[thm]{Lemma}
\newtheorem{prop}[thm]{Proposition}

\newtheorem*{thm*}{Theorem}

\theoremstyle{definition}
\newtheorem{defn}{Definition}[section]
\newtheorem{rem}{Remark}[section]

\newtheorem{example}{Example}[section]



\def\S{\mathbb{S}}

\newcommand{\dlim}{\displaystyle \lim\limits}
\newcommand{\dsum}{\displaystyle \sum\limits}

\def\T1{T^1_{\{x_n\}}}
\def\Sp{\mathcal S}
\def\C{\mathcal C}
\def\Re{\mathcal R}
\def\R{\mathbb R}

\def\Z{\mathbb Z}

\def\T1{T^1_{\{x_n\}}}
\def\Td1{T^{D,1}_{\{x_n\}}}
\def\Ts1{T^{S,1}_{\{x_n\}}}

\def\TV{\text{TV}}

\def\Tid1{T^{D,1}_{\{x_n+\zeta_i\}}}
\def\Tis1{T^{S,1}_{\{x_n+\zeta_i\}}}
\def\Tjs1{T^{S,1}_{\{x_n+\zeta_j\}}}
\def\Tjd1{T^{D,1}_{\{x_n+\zeta_j\}}}

\newcount\colveccount
\newcommand*\colvec[1]{
        \global\colveccount#1
        \begin{pmatrix}
        \colvecnext
}
\def\colvecnext#1{
        #1
        \global\advance\colveccount-1
        \ifnum\colveccount>0
                \\
                \expandafter\colvecnext
        \else
                \end{pmatrix}
        \fi
}

\makeatletter
\newcommand{\labitem}[2]{%
\def\@itemlabel{\textbf{#1}}
\item
\def\@currentlabel{#1}\label{#2}}
\makeatother

\title{
 A path method for non-exponential ergodicity of Markov chains and its application for chemical reaction systems}

\author{
Minjoon Kim \and
Jinsu Kim  }

\begin{document}

\tikzset{every node/.style={font=\tiny,sloped,above}}
\tikzset{every state/.style={rectangle, minimum size=0pt, draw=none, font=\normalsize}}
 \tikzset{bend angle=15}
 
\maketitle

\begin{abstract}

In this paper, we present criteria for non-exponential ergodicity of continuous-time Markov chains on a countable state space. These criteria can be verified by examining the ratio of transition rates over certain paths.
We applied this path method to explore the non-exponential convergence of microscopic biochemical interacting systems. Using reaction network descriptions, we identified special architectures of biochemical systems for non-exponential ergodicity. In essence, we found that reactions forming a cycle in the reaction network can induce non-exponential ergodicity when they significantly dominate other reactions across infinitely many regions of the state space.
Interestingly, special architectures allowed us to construct many detailed balanced and complex balanced biochemical systems that are non-exponentially ergodic. Some of these models are low-dimensional bimolecular systems with few reactions. Thus this work suggests the possibility of discovering or synthesizing stochastic systems arising in biochemistry that possess either detailed balancing or complex balancing and slowly converges to their stationary distribution.
\end{abstract}

\classification{60J27, 60J28}

\section{Introduction}
 Continuous-time Markov chains on a discrete state space are commonly used stochastic processes that describe various interacting particle systems. When the state space of a non-explosive continuous-time Markov chain $X$ is finite and irreducible, the time evolution of the probability distribution of the process converges to a limiting distribution $\pi$, a \emph{stationary distribution}, and in this case, $X$ is \emph{ergodic}. Furthermore $X$ on a finite irreducible state space admits \emph{exponential ergodicity}, which means that the convergence of the probability distribution to $\pi$ is exponentially fast in time. 

While finite state spaces are commonly used, especially for interacting particle systems such as the standard Ising models,
countably infinite state spaces often arise in stochastically modeled biochemical systems. For example, some biochemical species in the system can be constantly produced in many existing biochemical systems. In this case, the existence of such a stationary distribution is not always guaranteed, and once it exists, the convergence rate is not necessarily exponential in time. There have been many studies on analytic criteria for ergodicity and exponential ergodicity of continuous-time Markov chains on a countable state space \cite{diaconis2011mathematics, MT-LyaFosterIII,  spectral_ChenMufa1991, tweedie1981criteria}. 
There are few analytic conditions for non-exponential ergodicity such as those proposed in \cite{kingman1964, tweedie1981criteria}.
This is a version of the theorem shown in \cite{kingman1964}. 
\begin{thm} \label{thm: kingman}
    Let $X$ be an ergodic continuous-time Markov chain with state space $\mathbb S$. Suppose that there exists $x\in \mathbb S$ such that for each positive constant $\rho$, 
\begin{align}\label{eq:kingman key equality}
     \mathbb E _x [e^{\rho \tau _ x}] = \infty,   
    \end{align}
     where $\tau_x=\inf\{t> J_x: X(t)=x\}$ is the first return time of $x$, where $J_x=\inf\{t>0: X(t)\neq x\}$. Then $X$ is non-exponentially ergodic.
\end{thm}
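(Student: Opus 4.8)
The plan is to argue by contradiction: I will suppose that \eqref{eq:kingman key equality} holds at the given $x$ but that $X$ is nevertheless exponentially ergodic, and derive from this a finite exponential moment of $\tau_x$. Write $q_x\in(0,\infty)$ for the exit rate from $x$, put $p(t):=\mathbb P_x(X(t)=x)$, and (ergodicity makes $x$ recurrent, so $\tau_x<\infty$ almost surely) let $F$ be the distribution function of $\tau_x$, with Laplace transform $\widehat F(s)=\mathbb E_x[e^{-s\tau_x}]=\int_0^\infty e^{-st}\,dF(t)$. The first step is a \emph{renewal equation} for $p$: conditioning on the first jump time $J_x\sim\mathrm{Exp}(q_x)$, noting that on $\{J_x\le t\}$ the events $\{X(t)=x\}$ and $\{\tau_x\le t\}$ coincide, and applying the strong Markov property at $\tau_x$, one obtains
\[
 p(t)=e^{-q_x t}+\int_0^t p(t-u)\,dF(u),\qquad t\ge 0 .
\]
On $\mathrm{Re}(s)>0$ all three Laplace transforms converge ($p$ is bounded and $|\widehat F(s)|<1$), so transforming gives $\widehat p(s)\bigl(1-\widehat F(s)\bigr)=1/(s+q_x)$; since the right-hand side is nonzero, $\widehat p(s)\neq 0$ there, and hence
\[
 \widehat F(s)=1-\frac{1}{(s+q_x)\,\widehat p(s)},\qquad \mathrm{Re}(s)>0 .
\]

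Next I use exponential ergodicity, which supplies constants $C<\infty$ and $\beta>0$ with $|p(t)-\pi(x)|\le\|P^t(x,\cdot)-\pi\|_{\TV}\le Ce^{-\beta t}$, while $\pi(x)>0$ by irreducibility. Then $G(s):=\int_0^\infty e^{-st}\bigl(p(t)-\pi(x)\bigr)\,dt$ converges absolutely and is analytic on $\mathrm{Re}(s)>-\beta$, and $\widehat p(s)=\pi(x)/s+G(s)$ for $\mathrm{Re}(s)>0$, so $s\,\widehat p(s)=\pi(x)+sG(s)$ is analytic on $\mathrm{Re}(s)>-\beta$ with the nonzero value $\pi(x)$ at $s=0$. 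Substituting into the formula just obtained, $\widehat F$ agrees on $\mathrm{Re}(s)>0$ with the function $1-\dfrac{s}{(s+q_x)\bigl(\pi(x)+sG(s)\bigr)}$, whose denominator is analytic and nonvanishing throughout a neighbourhood of $s=0$; hence $\widehat F$ extends analytically to an open neighbourhood of $s=0$.

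On the other hand, \eqref{eq:kingman key equality} says that $\int_0^\infty e^{-st}\,dF(t)=\infty$ for every real $s<0$, whereas this integral equals $1$ at $s=0$; thus the \emph{abscissa of convergence} of the Laplace transform of the nonnegative measure $dF$ is exactly $0$. By the Landau--Pringsheim theorem, $s=0$ is then a \emph{singular point} of $\widehat F$ --- that is, $\widehat F$ admits no analytic continuation to any neighbourhood of $s=0$ --- which contradicts the conclusion of the previous paragraph. Therefore $X$ cannot be exponentially ergodic, and, being ergodic by hypothesis, it is non-exponentially ergodic.

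I expect the last step to be the real engine of the proof: the Landau--Pringsheim theorem (the abscissa of convergence of the Laplace transform of a nonnegative measure is a singularity of the transform) is precisely what turns ``$\widehat F$ continues analytically across $s=0$'' into ``$\tau_x$ has a finite exponential moment''. Everything else is routine --- deriving and transforming the renewal equation, where the one thing to notice is the identity $\{X(t)=x,\ J_x\le t\}=\{X(t)=x,\ \tau_x\le t\}$, and keeping track of domains of analyticity together with the three nonvanishing facts used when dividing: $1-\widehat F(s)\neq 0$ and $\widehat p(s)\neq 0$ on $\mathrm{Re}(s)>0$, and $\pi(x)+sG(s)\neq 0$ near $s=0$ (the last because $\pi(x)>0$).
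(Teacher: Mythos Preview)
Your proof is correct and follows essentially the same approach as the paper's: relate the Laplace transform of the return time to that of $p(t)=\mathbb P_x(X(t)=x)$, use exponential ergodicity to continue this relation analytically across $s=0$, and invoke Landau--Pringsheim (the paper's Lemma~\ref{lem: laplace2}) to obtain a contradiction. The only cosmetic difference is that you derive the key identity via the renewal equation $p(t)=e^{-q_x t}+\int_0^t p(t-u)\,dF(u)$ for $\tau_x$ directly, whereas the paper decomposes $\tau_x=U_1+U_2$ and obtains the equivalent formula for $U_2$ through an excursion sum; after multiplying by $\mathbb E[e^{-sU_1}]=q_x/(q_x+s)$ the two formulas coincide.
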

\noindent One may expect that showing \eqref{eq:kingman key equality} is challenging due to complicated calculations for exponential moments of the first return time $\tau_x$. In \cite{tweedie1981criteria}, it was shown that solvability of some algebraic equations associated with the transition rates of a given Markov chain is sufficient and necessary for \eqref{eq:kingman key equality}.

In this paper, we provided new sufficient conditions for showing non-exponential ergodicity of continuous-time Markov chains on a countable state space. The conditions, loosely speaking, are met if the state space has a series of paths on which the Markov chain gets trapped. That is, the Markov chain stays long time on the paths with high probability once it enters. 
We believe that the proposed conditions clearly illustrate the behavior of Markov chains that induce non-exponential ergodicity. Also since the path conditions are solely determined by the transition rates of the Markov chains, one can more easily verify the non-exponential ergodicity of a given Markov chain. 

As the main application of this work, we used the path conditions to identify classes of biochemical systems that admit non-exponential ergodicity.
Searching for such classes of biochemical systems endowed with certain dynamical behaviors falls into the study of stochastic chemical reaction network theory, a subfield of probability and mathematical biology.  
 A chemical reaction network is a graphical configuration of the interactions of biochemical species. For instance
 \begin{align*}
A+B\xrightarrow{\kappa_1} C \xrightarrow{\kappa_2} \emptyset.
 \end{align*} The reaction $A+B\to C$ indicates that one copy of species $A$ and one copy of species $B$ dimerize together to produce one copy of species $C$, and the other reaction $C\to \emptyset$ means that one copy of species $C$ is removed from the system. The time evolution of the copy numbers of such chemical species can be stochastically modeled by a continuous-time Markov chain on a countable state space. The constants $\kappa_1$ and $\kappa_2$ are system parameters that will be incorporated in the transition rates for the associated Markov chain.

 When such a reaction network is read as a graph, it is possible that the geometry and topology of the graph directly imply special dynamical behaviors of the associated stochastic dynamical system. The special dynamical behaviors of interest include non-explosivity, irreducibility, ergodicity, and exponential ergodcity. Hence one can classify biochemical  systems using structural properties of the underlying reaction networks so that the classes admit certain dynamical features regardless of system parameters. 
 Ergodicity (positive recurrence) \cite{anderson2020stochastically, anderson2020tier, anderson2018some}, the existence of a unique quasi-stationary distribution \cite{hansen2020existence}, explicit forms of the stationary distributions \cite{AndProdForm, hirono2023squeezing, hong2021derivation}, non-explosivity \cite{anderson2018non} and the large deviation principle \cite{ agazzi2018geometry, agazzi2018large} of the stochastically modeled biochemical systems are studied based on the structural conditions of the reaction networks. Few studies have been done for exponential ergodicity of chemical reaction networks including the case of reaction networks with a single species \cite{XuHansenWiuf2022}, reaction networks having a single connected component \cite{anderson2023mixing}, reaction networks having all homo-dimerizations \cite{anderson2023mixing} and reaction networks having special balancing conditions \cite{anderson2023new}. However, there are no well-known structural conditions guaranteeing non-exponential ergodicity, to the best knowledge.
 
In this paper, we provided classes of reaction networks admitting non-exponential ergodicity. Those classes are characterized by structural conditions identified with our path conditions. Hence the structural conditions also induce the trapping of the associated Markov chains on some paths. More precisely, we showed that if a certain sequence of reactions forms a cycle (so-called a \emph{strong tier-1 cycle}) in the reaction network, then the state space contains infinitely many paths where the associated Markov chain stays for a long time with high probability, and in turn, non-exponential ergodicity can follow. 

Remarkably, we also showed that the proposed structural conditions can be used to find many different non-exponentially ergodic stochastic biochemical systems that are either detailed balanced or complex balanced (a special balancing condition yielding the existence of a Poissonian stationary distribution). 
We also discussed a connection of our work to spectral gap and the congestion ratio that have been typically employed for showing exponential ergodicity. 

This paper is outlined below. In Section \ref{sec:main analytic results}, we introduce the main Markov models and the main analytic conditions for non-exponential ergodicity with motivating examples. In Section \ref{sec:crn}, we introduce reaction networks and the associated Markov chains. Then we define the strong tier-1 cycles and provide the main results about structural conditions for non-exponential ergodicity. Lastly, we talk about the non-exponential ergdocity of detailed balanced and complex balanced reaction systems. All the proofs about the main results in Section \ref{sec:crn} are provided in Section \ref{sec:proofs}. In Section \ref{sec:spectral}, we discuss how the spectral gap and congestion ratios are related to strong tier-1 cycles. In Appendix \ref{app:prove kingman}, we show the proof of Theorem \ref{thm: kingman} for the sake of self-containedness. In Appendix \ref{appdx:prove lemma}, some important lemmas and their proofs are provided.

\section{Main analytic results}\label{sec:main analytic results}
In this section, we introduce the basic Markov models used throughout this paper. We also provide the path conditions that can guarantee non-exponential ergodicity. Before introducing the main models, we introduce some notations that we will use in this paper. 

\subsection{Notations}
For $x\in \mathbb Z^d$, we denote the $i$ th component of $x$ by $(x)_i$. The set of vectors in $\mathbb Z^d$ each of whose component is non-negative is denoted by $\mathbb Z^d_{\ge 0}=\{x \in \mathbb Z^d : (x)_i \ge 0 \text{ for each $i$}\}$. Similarly we define $\mathbb R^d_{\ge 0}=\{x\in \mathbb R^d: (x_i)\ge 0 \text{ for each $i$}\}$. We use $\vec{0}$ to denote the zero vector in $\mathbb Z^d$. As commonly used, $e_i \in \mathbb Z^d$ is the elementary vector whose $i$ th component is $1$ and the other components are $0$.  

For two vectors $u$ and $v$, $u\le v$ and $u<v$ mean that the inequalities hold component-wisely. In other words, $u\le v$ and $u<v$ if and only if for each $i$, $(u)_i\le (v)_i$ and $(u)_i <(v)_i$, respectively. These inequalities may provide a partial order for
multiple vectors. For a sequence $\{a_n\}\subseteq \mathbb Z$, $a_n\equiv c$ means that $a_n=c$ for each $n$.

Let $X=\{X(t)\}_{t\ge 0}$ be a continuous-time Markov chain defined on $\mathbb Z^d_{\ge 0}$.
Throughout this paper, we use $d$ to denote the dimension of a given Markov chain and the number of species for stochastic modeling of reaction networks.
We denote the discrete state space of a continuous-time Markov chain by $\mathbb S$. Especially, if $X(0)=x$ almost surely for some state $x$, then the state space of $X$ can be confined on a subset (not necessarily a proper subset) of $\mathbb Z^d_{\ge 0}$. In this case, we denote the state space by  $\mathbb S_x \subseteq \mathbb Z^d_{\ge 0}$. We denote the rate of the transition from state $z$ to $w$ by $q_{z,w}$ such that
\begin{align*}
   q_{z,w} = \lim_{h\to 0}\frac{P(X(h)=w|X(0)=z)}{h}\ge 0,
\end{align*}
and the total transition rate at $z$ is denoted by $q_z:=\sum_{w\in \mathbb S}q_{z,w}$. We use $P^t(x,\cdot)$ to denote the probability distribution of $X(t)$ initiated at state $x$. That is, for any subset $A$ of $\mathbb S$, $P^t(x,A)=P(X(t)\in A |X(0)=x)$.

\subsection{The basic model}
Let $X=\{X(t)\}_{t\ge 0}$ be a continuous-time Markov chain defined on a discrete state space $\S$. Let $\mathcal A$ be the infinitesimal generator of $X$ that is defined as
\begin{align*}
    \mathcal AV(z)=\sum_{w}q_{z,w}(V(w)-V(z)) \quad \text{for a sutiable function $V:\mathbb S\to \R$.}
\end{align*}
Suppose that $X$ admits a stationary distribution $\pi$ on $\mathbb S$ such that \begin{equation}\label{eq:equilibrium}
\pi(z)\sum_{w\in \mathbb{S}\setminus\{z\}}q_{z,w}=\sum_{w\in \mathbb{S}\setminus\{z\}}\pi(w)q_{w,z}, \quad \text{for each $w\in \mathbb S$.}
\end{equation}
Throughout this paper, assuming irreducibility of $\mathbb S$ and non-explosivity of $X$, we guarantee that for any initial distribution $\mu$ on $\mathbb S$, we have $\displaystyle \lim_{t\to \infty}P_\mu(X(t)\in A)=\pi(A)$ for any subset $A \subseteq \mathbb S$ \cite{NorrisMC97}. %

The convergence rate of $P^t(x,\cdot)$ to $\pi$ is often measured with the total variation norm $\Vert P^t(x,\cdot) - \pi(\cdot) \Vert_{\TV}$, where $\|\mu-\nu\|_{\TV} = \sup_A |\mu(A)-\mu(A)|=\frac12 \sum_{x\in \mathbb{S}} |\mu(x) - \nu(x)|$  for two probability measures on the same discrete measurable space \cite{YuvalLevinMixing}. 
We now define ergodicity and exponential-ergodicity of $
X$.
\begin{defn}
     A continuous-time Markov chain $X$ on a discrete countable state space $\mathbb{S}$ is \emph{ergodic} if $\mathbb S$ is irreducible and there exists a probability measure $\pi$ such that for all $x\in \mathbb S$,
    \begin{align*}
    \norm{P^t(x,\cdot)-\pi(\cdot)}_{TV} \to 0 \quad \text{as} \quad t\to \infty.
    \end{align*}
    $X$ is \emph{exponentially ergodic} if $X$ is ergodic and there exists $\eta>0$ such that for all $x\in \mathbb{S}$, 
    \begin{align}\label{eq:expo ergo}
        \norm{P^t(x,\cdot)-\pi(\cdot)}_{TV} \le B(x)e^{-\eta t}, \quad \text{for all} \ t\ge 0,
    \end{align}
    for some function $B(x):\mathbb{S} \longrightarrow \mathbb{R}_{\ge 0}$. When $X$ is ergodic with a unique stationary distribution $\pi$, if there exists $x\in \mathbb S$ such that  \eqref{eq:expo ergo} does not hold for any constant $B(x)>0$ and $\eta>0$, then $X$ is \emph{non-exponentially ergodic}. 
    \end{defn}

For a continuous-time Markov chain $X$ on the state space $\mathbb S$, `paths' on the state space have been typically used in showing exponential ergodicity (see for example \cite[Chapter 3]{saloff1997lectures} or \cite[Section 3.3]{berestycki2009eight}). We also use paths to describe our sufficient conditions of non-exponential ergodicity. The paths are defined as ordered lists of states. We call that a path is \emph{active} if the transition probabilities between consecutive states are strictly positive. The more precise definitions are as follows.
\begin{defn}
    An ordered list of states (allowing repetition) $\gamma=(x(1),x(2),\dots,x(M)) \subseteq \mathbb S$ is \emph{active} if $q_{x(m),x(m+1)}>0$ for each $m\in \{1,2,\dots,M-1\}$. For a path $\gamma=(x(1),x(2),\dots,x(M))$, the number of states in the path is $M$, which is denoted by $|\gamma|$. For two active paths $\gamma^1=(x(1),\dots,x(|\gamma^1|))$ and $\gamma^2=(z(1),\dots,z(|\gamma^2|))$ with $x(|\gamma^1|)=z(1)$, the composition of the two active paths is  
    $$\gamma^1 \circ \gamma^2:=(x(1),\dots,x(|\gamma^1|),z(2),\dots,z(|\gamma^2|)).$$
\end{defn}
\begin{rem}
     Generally, $\gamma^1 \circ \gamma^2$ and $\gamma^2 \circ \gamma^1$ are different. That is, the order of the composition matters. Also the composition of two active paths is active.
\end{rem}

\begin{defn}
  Let $X$ be a continuous-time Markov chain. A state $z$ is \emph{reachable} from another state $x$ if there exists an active path $\gamma=(x=x(1),x(2),\dots,x(M)=z)$ for some states $x(i)$'s.
\end{defn}

We now define the path probabilities. Let $T_m=\inf\{ t > T_{m-1}: X(t) \neq X(T_{m-1})\}$ be the time for the $m$ th transition of $X$ for $m\in \{1,2,\dots,\}$, where $T_0=0$. 
\begin{defn}
Let $X$ be a continuous-time Markov chain. 
    For an active path $\gamma = (x(1),x(2),\dots,x(M))$, by abusing the notation, we let $\gamma$ also indicate the event such that
    \begin{align*}
        \{X(0) = x(1), X(T_1) = x(2), \dots, X(T_{M-1}) = x(M)\}
    \end{align*}
    Then we define the path probability as
    \begin{align*}
        P(\gamma) = P(X(0) = x(1), X(T_1) = x(2), \dots, X(T_{M-1}) = x(M)).
    \end{align*}
\end{defn}

Let $\gamma=(x(1),x(2),\dots,x(|\gamma|))$ be an active path such that $x(1)=x(|\gamma|)=x$. Conditioned on  $\gamma$, the moment generating function of the returning time $\tau_x$ as in Theorem \ref{thm: kingman} can be explicitly written. For $\rho\in (0,\displaystyle\inf_i q_{x(i)})$,
 by using the strong Markov property, we define the conditioned moment generating function $F(\gamma, \rho)$ as 
\begin{align}
    F(\gamma,\rho)& = \mathbb E_{x}\left [e^{\rho \tau_x} \big | \gamma \right] P(\gamma) =\mathbb E_{x}\left [e^{\rho \tau_x} \big | X(T_m)=x(m), m\in\{1,2,\dots,|\gamma|\} \right] P(\gamma) \label{eq:generate function on paths1} \\
    &=  \prod_{m=1}^{|\gamma|-1} \mathbb E_{x(m)}[e^{\rho T_{x(m)}}] 
 \frac{q_{x(m),x(m+1)}}{q(x_m)}= \prod_{m=1}^{|\gamma|-1} \frac{q_{x(m)}}{q_{x(m
)}-\rho}\frac{q_{x(m),x(m+1)}}{q_{x(m)}} \notag \\
&=\prod_{m=1}^{|\gamma|-1}\frac{q_{x(m),x(m+1)}}{q_{x(m)}-\rho}, \label{eq:generate function on paths2} 
\end{align}
where $T_x$ is the `holding' time at $x$ defined as $T_x=\inf \{t>0 : X(t)\neq x\}$ given $X(0)=x$, which is exponentially distributed with the rate $q_x$.

\subsection{The main theorem}
In this section, the main path condition for non-exponential ergodicity is provided. We begin with a simple motivating example.

\begin{example}\label{ex:the main example 1}
    Let $X$ be a continuous-time Markov chain on $\mathbb Z^2_{\ge 0}$ such that for each $z\in \mathbb Z^2_{\ge 0}$
    \begin{align}\label{eq:main example of ctmc}
        q_{z,z+(1,1)}=1, \ q_{z,z-(1,1)}=(z)_1(z)_2, \ q_{z,z+(0,1)}=(z)_2\text{, and} \ q_{z,z-(0,1)}=(z)_2((z)_2-1).
    \end{align}
    This Markov chain models the copy numbers of species in a biochemical reaction system \eqref{eq:main reaction network example}, which we will precisely define in Section \ref{sec:crn}. Ergodicity of $X$ is held, and its unique stationary distribution $\pi$ is known to be a product form of Possions  as $\pi(x)=\dfrac{e^{-2}}{x_1!x_2!}$ \cite{AndProdForm}. Remarkably, under this stationary distribution, the Markov chain is detailed balance (or reversible) in the sense that $\pi(z)q_{z,w}=\pi(w)q_{w,z}$ for any pair of states $z$ and $w$.

  We found that when $X$ hits state $x_n=(n,0)$ with large enough $n$, $X$ stays in the active path $\gamma_n=(x_n,x_n+(1,1),x_n)$ for a long time due to the following facts:
    \begin{enumerate}
        \item The only possible transition from $x_n$ is the transition to $x'_n:=x_n+(1,1)$ because all the other transition rates are zero at $x_n$. 
         \item While there are three possible transitions at $x'_n$, the dominant transition is the transition to $x_n$ since $q_{x'_n,x_n}=n+1, q_{x'_n,x'_n + (1,1)}=1$, $q_{x'_n,x'_n + (0,1)}=1$, and $q_{x'_n,x'_n - (0,1)}=0$
    \end{enumerate}
This observation can be quantified with $ F(\gamma_n,\rho)$  as
\begin{align}\label{eq:key factor in ex}
  F(\gamma_n,\rho)=\frac{q_{x_n,x'_n}}{q_{x_n}-\rho} \frac{q_{x'_n,x_n}}{q_{x'_n}-\rho}=\frac{1}{1-\rho}\frac{n+1}{n+3-\rho} \quad \text{for small enough $\rho$}.
\end{align}
Then for some fixed large $n$, we have that $F(\gamma_n,\rho)>1$. Now we think of an active path $\Gamma^j_n$ starting at $x=(0,0)$ and returning to $x$ that contains $j$ times of the path $\gamma_n$. Then we will have that for some constant $c>0$
\begin{align*}
    \mathbb E_{x_n}[e^{\rho \tau_{x_n}}] \ge \mathbb E_x \left [e^{\rho \tau_{x_n}}|\Gamma^j_n \right ] P_x\left (\Gamma^j_n \right )\ge c F(\gamma_n,\rho)^j \to \infty, \quad \text{as $j \to \infty$},
\end{align*}
which derives non-exponential ergodicity of $X$ by Theorem \ref{thm: kingman}.
 In other words, the existence of such `trapping' paths on which $X$ spends a long time serves as a sufficient condition for non-exponential ergodicity. We illustrate non-exponential ergodicity of this model in Figure \ref{fig:1}.
  \hfill $\triangle$

 \begin{figure}[!h]
     \centering  \includegraphics[width=0.8\textwidth]{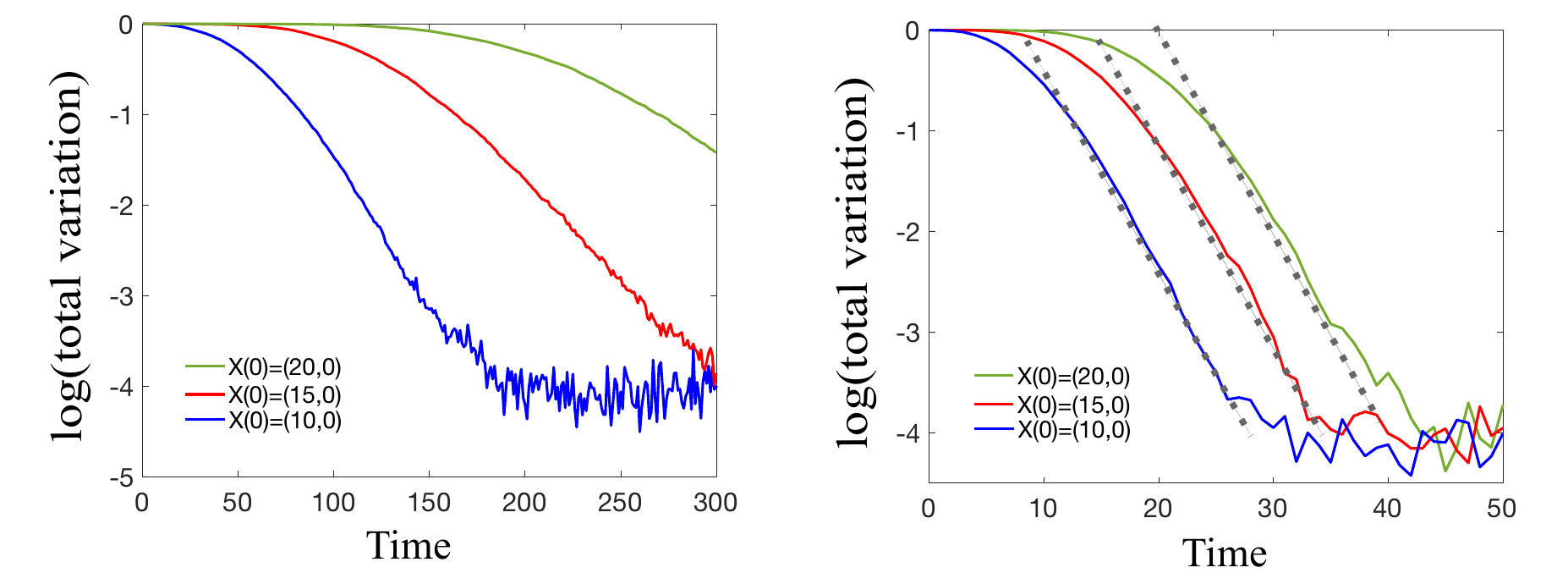}
     \caption{
     Let $X$ be the Markov chains defined in Example \ref{ex:the main example 1}. Let $\bar X$ be the Markov chain with the transition rates $q_{z,z+(1,1)}=1, q_{z,z-(1,1)}=(z)_1(z)_2, q_{z,z+(0,1)}=1$ and $q_{z,z-(0,1)}=(z)_2$ for each state $z$. The graphs of log-scaled total variation norms of $X$ (left) and $\bar X$ (right) are given. Exponentially ergodic of $\bar X$ holds \cite{anderson2023new}. Thus by \eqref{eq:expo ergo} the convergence for the initial states $(10,0), (15,0)$ and $(20,0)$ can be approximated with almost the same straight lines as indicated by the dotted lines. However $X$ is non-exponentially ergodic so that the convergence cannot be approximated with the same straight line for the initial conditions.}
     \label{fig:1}
 \end{figure}

\end{example}
We generalize the ideas shown in Example \ref{ex:the main example 1} with the following theorem.
To avoid trivial cases, we assume that $\inf_{z\in \mathbb S}q_z>0$.
\begin{thm}\label{thm:maybe main}
    Let $X$ be an ergodic continuous-time Markov chain on a countable state space $\mathbb S$. Suppose that there exists $x \in \mathbb S$ satisfying the following condition: for each $\rho\in (0, \displaystyle \inf_{z\in \mathbb S} q_z)$ there exists an active path $\gamma_\rho=(x(1), x(2),\dots,x(M), x(M+1)=x(1)) \subseteq \mathbb S$ with $M\ge 2$ such that
    \begin{enumerate}    
        \item   $x(m) \in \mathbb S$, and $x(m) \neq x$ for all $1 \le m \le M$, and
        \item  $F(\gamma_\rho, \rho)>1 \left (\text{i.e.} \displaystyle \prod_{m=1}^{|\gamma_\rho|-1}\frac{q_{x(m),x(m+1)}}{q_{x(m)}-\rho}>1 \text{ by \eqref{eq:generate function on paths2}}   \right ) $.   
    \end{enumerate}
    Then $X$ is non-exponentially ergodic.
\end{thm}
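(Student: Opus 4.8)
The plan is to deduce the theorem from Theorem~\ref{thm: kingman}: it suffices to verify \eqref{eq:kingman key equality} at the distinguished state $x$ for every $\rho>0$. Since $\tau_x\ge 0$, the map $\rho\mapsto e^{\rho\tau_x}$ is nondecreasing, so once $\mathbb E_x[e^{\rho\tau_x}]=\infty$ holds for a single value of $\rho$ it holds for all larger values; combined with the standing assumption $\inf_{z\in\mathbb S}q_z>0$, this means it is enough to prove $\mathbb E_x[e^{\rho\tau_x}]=\infty$ for every $\rho\in(0,\inf_{z\in\mathbb S}q_z)$, which is precisely the range over which the hypothesis supplies a trapping cycle $\gamma_\rho$.

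Fix such a $\rho$ and let $\gamma_\rho=(x(1),\dots,x(M),x(M+1)=x(1))$ be the active cycle furnished by the hypothesis, so that $x(m)\ne x$ for all $m$ and $F(\gamma_\rho,\rho)>1$. First I would build two connecting paths. Ergodicity of $X$ makes $\mathbb S$ irreducible, so there exist active paths $\alpha$ from $x$ to $x(1)$ and $\beta$ from $x(1)$ to $x$; after discarding the part of $\alpha$ before its last visit to $x$ and the part of $\beta$ after its first visit to $x$, I may assume that $x$ occurs in $\alpha$ only as its first state and in $\beta$ only as its last state. For each $j\ge 1$ set
\begin{equation*}
  \Gamma^j:=\alpha\circ\underbrace{\gamma_\rho\circ\cdots\circ\gamma_\rho}_{j}\circ\beta ,
\end{equation*}
which is well defined and active (all intermediate endpoints equal $x(1)$, and a composition of active paths is active) and is an active path from $x$ to $x$ none of whose states equals $x$ except the first and the last.

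Next I would extend the computation in \eqref{eq:generate function on paths1}--\eqref{eq:generate function on paths2} from a cycle through $x$ to an arbitrary active path: for $\eta=(w(1),\dots,w(N))$, letting $T_{N-1}$ be the time of the last transition along $\eta$ (with the chain started at $w(1)$), the strong Markov property and the independence of the exponential holding times give
\begin{equation*}
  \widetilde F(\eta,\rho):=\mathbb E_{w(1)}\big[e^{\rho T_{N-1}}\,\big|\,\eta\big]\,P(\eta)=\prod_{m=1}^{N-1}\frac{q_{w(m),w(m+1)}}{q_{w(m)}-\rho},
\end{equation*}
which is finite because $\rho<\inf_z q_z\le q_{w(m)}$ for each $m$, equals $F(\eta,\rho)$ when $\eta$ is a cycle, and is multiplicative under composition since the ordered list of transitions of $\eta^1\circ\eta^2$ is the concatenation of those of $\eta^1$ and $\eta^2$. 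Here the trimming of $\alpha$ and $\beta$ pays off: on the event $\Gamma^j$ the chain leaves $x$ at time $J_x$ and does not return to $x$ before the last transition of $\Gamma^j$, so $\tau_x$ equals that transition time on $\Gamma^j$; consequently, for every $j$,
\begin{equation*}
  \mathbb E_x[e^{\rho\tau_x}]\ \ge\ \mathbb E_x[e^{\rho\tau_x}\mid \Gamma^j]\,P(\Gamma^j)\ =\ \widetilde F(\Gamma^j,\rho)\ =\ \widetilde F(\alpha,\rho)\,F(\gamma_\rho,\rho)^{j}\,\widetilde F(\beta,\rho).
\end{equation*}
Since $\alpha$ and $\beta$ are active and $\rho<\inf_z q_z$, the factors $\widetilde F(\alpha,\rho)$ and $\widetilde F(\beta,\rho)$ are positive and finite, while $F(\gamma_\rho,\rho)^{j}\to\infty$ as $j\to\infty$ because $F(\gamma_\rho,\rho)>1$. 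Letting $j\to\infty$ gives $\mathbb E_x[e^{\rho\tau_x}]=\infty$, which establishes \eqref{eq:kingman key equality} and, by Theorem~\ref{thm: kingman}, non-exponential ergodicity.

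I do not expect a genuine obstacle; the effort is in the bookkeeping. The essential point — and the reason Theorem~\ref{thm: kingman} is applied at the fixed off-cycle state $x$ rather than at some state $x(m)$ of the cycle — is that trimming $\alpha$ and $\beta$ so that $\Gamma^j$ meets $x$ only at its endpoints forces $\tau_x$ on $\Gamma^j$ to be the traversal time of the whole concatenation, so that each of the $j$ copies of $\gamma_\rho$ contributes a factor $F(\gamma_\rho,\rho)>1$ and the lower bound diverges in $j$. Verifying that $\widetilde F$ is multiplicative under composition and coincides with $F$ on cycles is routine, as is the monotonicity argument that promotes the conclusion from $\rho\in(0,\inf_z q_z)$ to all $\rho>0$.
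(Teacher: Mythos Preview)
Your proof is correct and follows essentially the same approach as the paper: connect $x$ to the cycle $\gamma_\rho$ by two active paths, concatenate with $j$ copies of $\gamma_\rho$, and use the multiplicativity of the conditional moment generating function along the composed path to get a lower bound $\widetilde F(\alpha,\rho)F(\gamma_\rho,\rho)^j\widetilde F(\beta,\rho)\to\infty$, then invoke Theorem~\ref{thm: kingman}. You are somewhat more explicit than the paper about trimming $\alpha$ and $\beta$ so that $x$ appears only at the endpoints and about extending $F$ to non-closed paths via $\widetilde F$, but the argument is the same.
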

\begin{proof}
  Let $\rho\in (0, \displaystyle \inf_{z\in \mathbb S} q_z)$ be fixed. Since $X$ is ergodic, $\mathbb S$ is irreducible. Therefore, there exist active paths $\gamma^1_\rho=(x=z(1),z(2),\dots,z(K)=x(1))$  and $\gamma^2_\rho=(x(M+1)=w(1),w(2),\dots,w(L)=x)$ with  some states $z(k)$'s and $w(\ell)$'s that are not equal to $x$. 
    Let $\Gamma^j_\rho=\gamma^1_{\rho} \circ \gamma_\rho \circ \cdots \circ \gamma_\rho \circ \gamma^2_\rho$ be the compsition of the paths where the path $\gamma_{\rho}$ is included $j$ times in the middle. Conditioning on the path $\Gamma^j_\rho$, the returning time $\tau_x$ is equal to $\sum_{k=1}^{K-1} T_{z(k)} + j\sum_{m=1}^{M} T_{x(m)} + \sum_{\ell=1}^{L-1} T_{w(\ell)}$ in distribution, where $T_x$ is the exponential holding time at state $x$.  Then using the strong Markov property we have that 
    \begin{align}
        &\mathbb E _x [ e^{\rho \tau _x} ] \ge \mathbb E _x \left [e^{\rho \tau _x}|\Gamma^j_\rho \right ]P_x\left (\Gamma^j_\rho \right ) \notag \\
        &= \mathbb E _x \left [e^{\rho \sum_{k=1}^{K-1} T_{z(k)}}|\gamma^1_\rho \right ]
         P_x(\gamma^1_\rho) 
        \left ( \mathbb E_{x(1)} \left [e^{\rho \sum_{m=1}^{M} T_{x(m)}}|\gamma_\rho \right ]
         P_{x(1)}(\gamma_\rho) \right )^j  \mathbb E _{x(1)} \left [e^{\rho \sum_{\ell=1}^{L-1} T_{w(\ell)}}|\gamma^2_\rho \right ]
         P_{x(1)}(\gamma^2_\rho). \notag      
    \end{align}
    Then by \eqref{eq:generate function on paths1}
     \begin{align*}
        \mathbb E _x (e^{\rho \tau _x}) &\ge F(\gamma^1_\rho, \rho)F(\gamma_\rho, \rho)^j F(\gamma^2_\rho, \rho),
     \end{align*}
     which approaches to infinity as $j\to \infty$ due to the hypothesis $F(\gamma_\rho,\rho)>1$.
        Therefore, $\mathbb E _x (e^{\rho \tau _x})=\infty$.
        For any $\rho' \ge  \displaystyle \inf_{y\in \mathbb S}q_y$, it is obvious that $\mathbb E_x[e^{\rho' \tau_x}] \ge \mathbb E_x[e^{\rho \tau_x}]$ for any $0<\rho<\displaystyle \inf_{y\in \mathbb S}q_y$. Therefore,
     by Theorem \ref{thm: kingman}, $X$ is non-exponentially ergodic.

\end{proof}

In example \ref{ex:the main example 1}, if we let $x=(0,0)$ and for each $\rho \in (0,\inf_{z\in \mathbb S} q_z)$ if we let $\gamma_\rho=(x_n,x'_n,x_n)$ with large enough $n$, then the conditions in Theorem \ref{thm:maybe main} hold.

\section{Main application: non-exponential ergodicity of stochastic reaction networks}\label{sec:crn}

    As the main application of Theorem \ref{thm:maybe main}, we find classes of stochastic biochemical systems for which the associated Markov chain admit non-exponential ergodicity. The goal of this application is to characterize the classes by structural conditions of the biochemical systems. To do that we first graphically represent the biochemical systems with reaction networks. 
A reaction network is a graph that consists of the set of possible interactions among constituent chemical species. The nodes of the graphs are complexes composed with combinations of chemical species, and the edges of the graphs are reactions between complexes. We can define reaction networks as follows. 
\begin{defn}\label{def:21}
\emph{A  reaction network} is given by a triple of finite sets $(\Sp,\C,\Re)$ such that
\begin{enumerate}
\item  $\Sp=\{S_1,S_2,\cdots,S_d\}$ is a set of $d$ symbols, called the \emph{species} of the network,
\item $\C$ is a set of $y=\sum_{i=1}^d (y)_iS_i$, a linear combinations of $S_i$'s with $(y)_i\in \mathbb Z_{\ge 0}$ for each $i$, called \emph{complexes},
\item $\Re$ is a subset of $\C\times\C$, whose elements $(y,y')$ are typically denoted by $y\to y'$.
\end{enumerate}
\end{defn}
For each $y\to y'\in \Re$, $y$ and $y'$ are called the \emph{source} complex and the \emph{product} complex, respectively. We denote by $\C_s=\{y \in \C: y\to y'\in \Re\}$ the set of the source complexes. For each complex $y\in \C$, we denote by $\Re_y=\{y\to y'\in \Re \text{ for some $y'\in \C$}\}$ the subset of reactions where $y$ serves as the source complex. 

For each $y=\sum_{i=1}^d (y)_iS_i \in \C$, using an abuse of notation, we also denote $y$ by a vector $y=((y)_1,(y)_2,\dots,(y)_d) \in \mathbb Z^d_{\ge 0}$. For example, when $\Sp=\{S_1,S_2,S_3,S_4\}$, then for a complex $y=2S_1+S_2+3S_4$, it can be represented by a vector $y=(2,1,0,3) \in \Z^4_{\ge 0}$. We denote by $y=\emptyset$ for the complex $y$ with $y_i=0$ for each $i$. Commonly we assume that $y\to y\notin\Re$ for all $y\in\C$, and we will do so in the present paper. 

A reaction network $(\Sp,\C,\Re)$ is often illustrated with a directed graph whose nodes are $\C$ and whose edges are $\Re$. For example, 
\begin{equation}\label{ex:crn1}
   \begin{tikzpicture}[baseline={(current bounding box.center)}, scale=0.8]
   \node[state] (1) at (0,2)  {$\emptyset$};
   \node[state] (2) at (2,2)  {$A$};
   \node[state] (3) at (5,2)  {$B+C$};
   \node[state] (4) at (7,2)  {$2A$};
   \node[state] (5) at (9,2)  {$C$};
   \node[state] (6) at (11,2)  {$B$};
   \node[state] (7) at (14,2)  {$A+B+C$};
   \node[state] (8) at (12.5,0)  {$D$};
   \path[->]
    (2) edge node {} (3)
    (1) edge node {} (2)
    (1) edge[bend right=25] node {} (3)
    (4) edge[bend left] node {} (5)
    (5) edge[bend left] node {} (4)
    (6) edge node {} (7)
    (7) edge node {} (8)
    (8) edge node {} (6);
  \end{tikzpicture}
\end{equation}

We introduce some terminologies that are generally used to describe some structural features of reaction networks.
\begin{defn}\label{def:WR}
Let $(\Sp,\C,\Re)$ be a reaction network.  We say that a connected component in the reaction network is \emph{weakly reversible} if it is strongly connected, i.e., if for any $y\to y' \in \Re$ there exist complexes $y_i$ such that $( y_1\to  y_2, y_2\to  y_3,\dots,  y_{M-1}\to  y_{M},  y_{M}\to  y_1) \subseteq \Re$, where $ y_1=y$ and $ y_2=y'$.
If all connected components in a reaction network are weakly reversible, then the reaction network is said to be weakly reversible. 
\end{defn}

\begin{example}\label{ex11}
  The reaction network in \eqref{ex:crn1} consists of three connected components. While the right-most and middle connected components are weakly reversible, the first one is not. Due to the presence of non-weakly reversible components, the entire network is not weakly reversible.
\hfill $\triangle$
\end{example}

\subsection{Stochastic models for reaction networks}
\label{sec:dynamics}

In this section we introduce a stochastic process associated with reaction networks, which is often utilized in biochemistry when the abundances of species are low. Assuming the typical spatially well-mixed status of the chemical systems, the stochastic dynamics will be defined as a continuous-time Markov chain on $\mathbb Z^d_{\ge 0}$ to model the copy numbers of species in the biochemical systems. 

First, each reaction $y\to y'$ is associated with an \emph{intensity function} $\lambda_{y\to y'}\colon \Z^d\to \R_{\geq0}$, which quantifies the expected time for the reaction to occur. It is natural to assume that $\lambda_{y\to y'}(x)>0$ only if $x\geq y$. This means that $y\to y'$ can take place only if enough reacting species exist.  \emph{Mass-action kinetics} is most common kinetics for defining $\lambda_{y\to y'}$, where the intensity of a reaction is proportional to the number of combinations of the present species engaged to the reaction. Specifically, $\lambda_{y\to y'}$'s are defined under mass-action kinetic as
\begin{align}\label{mass}
\lambda_{y\rightarrow y'}(x)= \kappa_{y\rightarrow y'} \lambda_y(x), \quad\text{where}\quad \lambda_y(x)=\prod_{i=1}^d \frac{x_i !}{(x_i-y_{i})!}\mathbf{1}_{\{x_i \ge y_{i}\}},
\end{align}
for some positive parameters $\kappa_{y\to y'}$.   A reaction network together with a choice of a set of parameters $\kappa=\{\kappa_{y\to y'}:y\to y'\in \Re\}$ is denoted by $(\Sp, \C, \Re, \kappa)$ and is called a \emph{reaction system}.  Once $\kappa$ is incorporated in a reaction network, we denote each reaction $y\to y'\in \Re$ by $y\xrightarrow{\kappa_{y\to y'}} y'$ when the reaction network is graphically represented.
Throughout this paper, we assume mass-action for the stochastic models of reaction networks.

For a reaction system $(\Sp, \C, \Re, \kappa)$, let $X=\{X(t)\}_{t\ge 0}$ be the associated continuous-time Markov chain. Then the transition rate from state $z$ to state $w$ are given by
\begin{equation}\label{eq:q}
    q_{z,w}=\sum_{\substack{y\to y' \in \Re:\\ y'-y=w-z}}\lambda_{y\to y'}(z).
\end{equation}
Then for every $t\ge 0$, $(X(t))_i$ represents the copy number of species $S_i$ at time $t$. That is, the transition of $X$  generated by a reaction $y\to y'$ yields the change of the copy numbers of species.  
The generator $\mathcal{A}$ of $X$ is given by
\begin{align}\label{gen5}
 \mathcal{A}V(z) = \sum_{z}q_{z,w}(V(w)-V(z) )=\sum_{y \rightarrow y' \in \Re} \lambda_{y\rightarrow y'}(z)(V(z+y'-y)-V(z)), 
\end{align}
where $V:\Z^d_{\ge 0} \to \R$ \cite{Kurtz86}.
The state space of $X$ with $X(0)=x$ is denoted by $\mathbb S_x$, as mentioned before. Note that  $q_{z,w}>0$ for $z,w \in \mathbb Z^d_{\ge 0}$ if and only if there exists $y\to y'\in \Re$ such that $w-z=y\to y'$ and $\lambda_{y\to y'}(z)=\kappa_{y\to y'}\lambda_y(z)>0$ (equivalently $z>y$). Thus the state space $\mathbb S_x$ is determined by  $\Re$ and the initial stat $x$ independently from the choice of the parameters $\kappa$.

\subsection{Strong tier-1 cycles}\label{sec:strong}

In this section, we provide the key structural features for showing non-exponential ergodicity when the Markov chain associated with a reaction network is ergodic under a choice of the parameters $\kappa$. The main idea  is to find a set of reactions that generate paths satisfying the conditions of Theorem \ref{thm:maybe main}. The following example will provide an insight about the connection between the structural conditions of reaction networks and the path conditions of Theorem \ref{thm:maybe main}. For the sake of readability, we provide the proofs of Theorem \ref{thm:maybe main} and Proposition \ref{prop:strong tier 1} of this section later in Section \ref{sec:proofs}.  

\begin{example}\label{ex:strong tier 1}
    Consider the following reaction system $(\Sp, \C, \Re, \kappa)$.
\begin{align}\label{eq:main reaction network example}
    \emptyset \xrightleftharpoons[\kappa_2]{\kappa_1} A+B, \quad B \xrightleftharpoons[\kappa_4]{\kappa_3} 2B.
\end{align}
The associated Markov chain $X$ for the reaction network \eqref{eq:main reaction network example} under mass-action kinetics \eqref{mass} is the same as the continuous-time Markov chain introduced in Example \ref{ex:the main example 1} if we set $\kappa_i = 1$ for each $i$. Similarly to Example \ref{ex:the main example 1},
for any choice of $\kappa_i$'s, we can show that for large enough $n$, $X(t)$ stays a long time in the path $\gamma_n=((n,0), (n+1,1), (n,0))$ with high probability as described in Figure \ref{fig:2}. Interestingly by reactions $\emptyset \rightleftharpoons A+B$ forming a cycle in the reaction network \eqref{eq:main reaction network example}, $X$ transitions in between the states over $\gamma_n$. To find general structural conditions from this observation, we interpret the main condition $F(\gamma_n,\rho)>1$ for  each $0<\rho<\displaystyle\min_i \kappa_i = \displaystyle\inf_{z\in \mathbb Z^2_{\ge 0}} q_z$ in Theorem \ref{thm:maybe main} as an outcome of the relationship between the intensities of the reactions. To do so we first note that by \eqref{eq:generate function on paths2} 
\begin{align}
    F(\gamma_n,\rho) &= \frac{\lambda_{\emptyset \to A+B}(n,0)}{\lambda_{\emptyset \to A+B}(n,0)-\rho} \frac{\lambda_{A+B \to \emptyset} (n+1,1)}{\lambda_{A+B \to \emptyset} (n+1,1) + \lambda_{\emptyset \to A+B}(n+1,1) + \lambda_{B \to 2B}(n+1,1)-\rho} \notag\\
    \begin{split}\label{eq:how to bound F}
    &= \left (1 + \frac{\rho}{\lambda_{\emptyset \to A+B}(n,0)-\rho} \right )  \\ 
    & \times 
 \left( 1 - \frac{\lambda_{\emptyset \to A+B}(n+1,1)+\lambda_{B \to 2B}(n+1,1)-\rho}{\lambda_{A+B \to \emptyset} (n+1,1) + \lambda_{\emptyset \to A+B}(n+1,1) + \lambda_{B \to 2B}(n+1,1)-\rho}\right)
 \end{split}
\end{align}
Since $\lambda_{A+B \to \emptyset} (n+1,1) + \lambda_{\emptyset \to A+B}(n+1,1) + \lambda_{B \to 2B}(n+1,1)-\rho \ge  \lambda_{A+B \to \emptyset} (n+1,1)$ for each $n$,
  \eqref{eq:how to bound F} implies that
\begin{align}
    F(\gamma_n,\rho) &\ge \left( 1+\frac{\rho}{\lambda_{\emptyset\to A+B}(n,0)} \right )\left( 1 - \frac{\lambda_{\emptyset \to A+B}(n+1,1)+\lambda_{B \to 2B}(n+1,1)}{\lambda_{A+B \to \emptyset} (n+1,1) }\right) \label{eq:how to bound F2}.
\end{align}
Consequently the inequality $F(\gamma_n,\rho)>1$ is up to how quickly the terms $\dfrac{\lambda_{\emptyset\to A+B}(n+1,1)}{\lambda_{A+B\to \emptyset} (n+1,1) }$ and $\dfrac{\lambda_{B\to 2B}(n+1,1)}{\lambda_{A+B\to \emptyset} (n+1,1) }$ go to $0$ in comparison to $\dfrac{1}{\lambda_{\emptyset\to A+B}(n,0)}$, as $n\to \infty$. This comparison will be the key to generalize the observation of this example. 

In fact, both $\left(\dfrac{\lambda_{\emptyset\to A+B}(n+1,1)}{\lambda_{A+B\to \emptyset}(n+1,1)}\right ) / \left (\dfrac{1}{\lambda_{\emptyset\to A+B}(n,0)} \right )$ and $\left(\dfrac{\lambda_{B\to 2B}(n+1,1)}{\lambda_{A+B\to \emptyset}(n+1,1)}\right ) / \left (\dfrac{1}{\lambda_{\emptyset\to A+B}(n,0)} \right )$  approach to $0$, as $n\to \infty$. Using this we have  that for a sufficiently large $n$,
\begin{align*}
    F(\gamma_n,\rho) \ge 1+\frac{\rho}{\lambda_{\emptyset\to A+B}(n,0)}(1-\epsilon) > 1 \quad \text{for some sufficiently small $\epsilon>0$}
\end{align*}
because by \eqref{eq:how to bound F2}
\begin{align*}
F(\gamma_n,\rho)&\ge1+\frac{\rho}{\lambda_{\emptyset\to A+B}(n,0)}\left(1-\frac{\lambda_{\emptyset\to A+B}(n,0)\lambda_{\emptyset \to A+B}(n+1,1)}{\rho \lambda_{A+B \to \emptyset}(n+1,1)}-\frac{\lambda_{\emptyset\to A+B}(n,0)\lambda_{B \to 2B}(n+1,1)}{\rho \lambda_{A+B \to \emptyset}(n+1,1)}\right. \\
    & \hspace{4cm} \left. -\frac{\lambda_{\emptyset \to A+B}(n+1,1)+\lambda_{B \to 2B}(n+1,1)}{\lambda_{A+B \to \emptyset} (n+1,1) }\right).
\end{align*} 
 \hfill $\triangle$
\begin{figure}[!h]
    \centering
\includegraphics[width=0.8\textwidth]{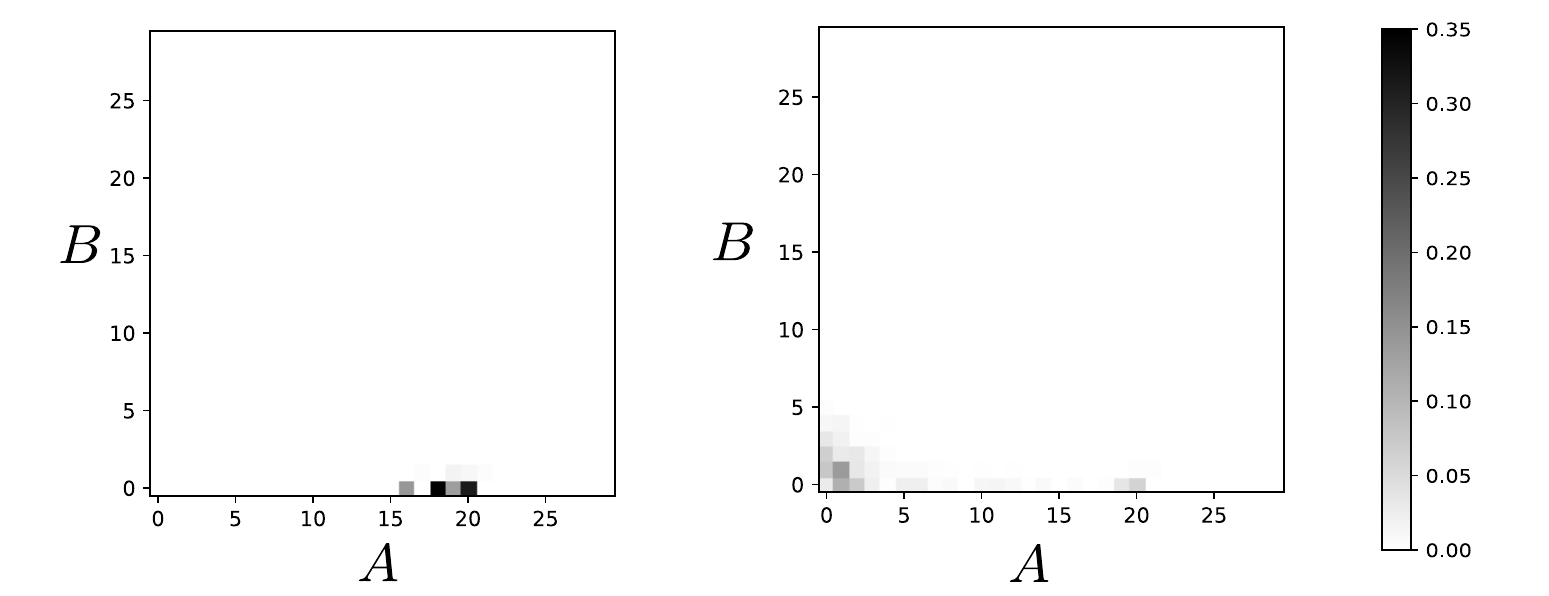}
    \caption{Heat maps of trajectories of the associated Markov chains for \eqref{eq:main reaction network example} (left) and another reaction network $B\rightleftharpoons \emptyset \rightleftharpoons A+B$ (right), respectively within a time interval $[0,100]$. The exponential ergodicity of the second model can be shown in the way provided in \cite{anderson2023new}. The trajectory of the non-exponentially ergodic model \eqref{eq:main reaction network example} is trapped on the path $((n,0),(n+1,1),(n,0))$ while the second model moves towards the origin quickly.}
    \label{fig:2}
\end{figure}  
 \end{example}

 In Example \ref{ex:strong tier 1}, it was shown that 
the ratios of the reaction intensities are the key conditions. To generalize this we define a special subset of reactions that can induce non-exponential ergodicity. 
\begin{defn}\label{def:strong tier1}
   Let $(\Sp,\C,\Re)$ be a reaction network. Let $R=(y^*_1\rightarrow y^*_2, y^*_2\rightarrow y^*_3, \cdots, y^*_M \rightarrow y^*_1 ) \subseteq \Re$ be an ordered list of reactions in $\Re$. We call $R$ a \text{\emph{strong tier-1 cycle}} along a sequence $\{x_n\} \subseteq \mathbb Z^d_{\ge 0}$ if
   \begin{enumerate}
      \item $\lambda_{y^*_1}(x_n)>0$ for all $n$, and 
       \item there exists $m_0$ such that for each $m\in \{1,\dots,M\}$ we have
        \begin{align}\label{eq:key for strong tier 1}
            \lim _{n\rightarrow \infty} \left ( \dfrac{\lambda_{y^*_{m_0}}(x^{m_0}_n)\lambda_{y}(x^m_n)}{\lambda_{y^*_m}(x^m_n)} \right ) = 0,
        \end{align}
        whenever there exists $y\to y'\in \Re\setminus \{y^*_m\to y^*_{m+1}\}$ (regarding $y^*_{M+1}=y^*_1$), where $x^m_n:=x_n+\sum_{j=1}^{m-1} y^*_{j+1}-y^*_j$ for each $m=2,\dots,M$ and $x^1_n=x_n$.
   \end{enumerate}
\end{defn}

\begin{rem} 
In \eqref{eq:key for strong tier 1}, the term $\dfrac{\lambda_{y}(x^m_n)}{\lambda_{y^*_m}(x^m_n)}$ quantifies how dominant $y^*_m\to y^*_{m+1}$ is at $x^m_n$. The reciprocal of the term $\lambda_{y^*_{m_0}}(x^{m_0}_n)$ is the expected holding time at $x^{m_0}_n$. Hence \eqref{eq:key for strong tier 1} relates the probability and the time for the associated Markov chain to stay in the paths generated by the reactions in $R$. This is the key condition for non-exponential ergodicity as shown in Example \ref{ex:strong tier 1}.
\end{rem}

\begin{rem} \label{rem:why cycle}
We constructed a strong tier-1 cycle $R$ with reactions forming a cycle in the reaction network in order to generate an active path by using the reactions in $R$. More precisely, we let $R=(y_1\to y'_1,y_2\to y'_2,\dots,y_M\to y'_M)$ be an ordered list of reactions, and for some $x\in \mathbb Z^d_{\ge 0}$ we define $x(m)=x+\sum_{j=1}^{m-1}(y'_j-y_j)$ for each $m$. Then a path $\gamma=(x(1),\dots,x(M))$ generated by $R$ is active if $R$ is a cycle (i.e. $y_{m+1}=y'_m$) and $x>y_1$.  This is because if a reaction $y\to y'$ is available at state $x$ (i.e. $x>y$), then $y'\to y''$ is available at $x+y'-y$ since $x+y'-y>y'$.
\end{rem}

The term `tier' is derived from the concept of \emph{tiers} in the study of chemical reaction systems. It provides a special language used to compare the size of the reactions intensities. The tiers were originally invented in \cite{AndGAC_ONE2011} in order to study the long-term behaviors of deterministic reaction systems and have been recently used to show ergodicity and exponential ergodicity of some classes of stochastic reaction systems \cite{AndGAC_ONE2011, anderson2020stochastically,  anderson2020tier, anderson2018some, anderson2023mixing}. Although the properties of strong tier-1 cycles are closely related to tiers, we do not explicitly use the tiers for the main methods of this paper.

As one may expect, the reaction $y^*_m\to y^*_{m+1}$ in a strong tier-1 cycle is a dominant reaction at $x^m_n$. However, a strong tier-1 cycle has additional features. For a given strong tier-1 cycle $R=(y^*_1\rightarrow y^*_2, y^*_2\rightarrow y^*_3, \cdots, y^*_M \rightarrow y^*_1 )$, we keep using the notation $x^m_n:=x_n+\sum_{j=1}^{m-1} y^*_{j+1}-y^*_j$ for each $m=2,\dots,M$ and $x^1_n=x_n$ throughout this paper.
\begin{prop}\label{prop:strong tier 1}
    For a reaction network $(\Sp,\C,\Re)$, suppose that $R=(y^*_1\rightarrow y^*_2, y^*_2\rightarrow y^*_3, \cdots, y^*_M \rightarrow y^*_1 )$ is a strong tier-1 cycle along $\{x_n\}$. 
    Then, considering a subsequence of $\{x_n\}$ if needed, we have that
    \begin{enumerate}
        \item for each $m$, $\lambda_{y^*_m}(x^m_n)>0$ for all $n$ and $\dlim_{n\to \infty} \frac{\lambda_{y}(x^m_n)}{\lambda_{y^*_m}(x^m_n)}=0$ for any $y\in \C_s\setminus \{y^*_m\}$,  
        \item $\Re_{y^*_m}:=\{y^*_m \to y' \in \Re\}=\{y^*_m\to y^*_{m+1}\}$ for each $m\in \{1,\dots,M\}$ (regarding $y^*_{M+1}=y^*_1$),
       \item for each $n$, $\lambda_y(x^{m_0}_n)=0$ if $y\in \C_s \setminus \{y^*_{m_0}\}$, and
       \item $\{i: \displaystyle\sup_{n}(x_n)_i< \infty \}\neq \emptyset$.
    \end{enumerate}
\end{prop}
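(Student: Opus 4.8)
The plan is to extract the four assertions of Proposition~\ref{prop:strong tier 1} directly from the defining condition \eqref{eq:key for strong tier 1}, passing to a subsequence of $\{x_n\}$ whenever a statement requires it. Fix $m_0$ as in Definition~\ref{def:strong tier1}. The first step is to observe that $\lambda_{y^*_m}(x^m_n)>0$ for all large $n$: if it were $0$ infinitely often, then along that subsequence the ratio $\lambda_{y^*_{m_0}}(x^{m_0}_n)\lambda_{y}(x^m_n)/\lambda_{y^*_m}(x^m_n)$ with $y=y^*_m$ (note $y^*_m\to y^*_{m+1}$ is excluded in \eqref{eq:key for strong tier 1}, but any other reaction $y^*_m\to y'$ would do, and if $\Re_{y^*_m}=\{y^*_m\to y^*_{m+1}\}$ one argues directly using $\lambda_{y^*_{m_0}}(x^{m_0}_n)>0$ from assumption~(1) for $m_0$ and the mass-action form) would blow up or be undefined — I would spell this out carefully, first handling $m=m_0$ using hypothesis~(1), then propagating to general $m$. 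Once positivity of $\lambda_{y^*_m}(x^m_n)$ is established, condition \eqref{eq:key for strong tier 1} immediately gives $\lambda_y(x^m_n)/\lambda_{y^*_m}(x^m_n)\to 0$ for every $y\to y'\in\Re$ with $y\neq y^*_m$; taking one representative reaction for each source complex $y\in\C_s\setminus\{y^*_m\}$ and using finiteness of $\C_s$ yields assertion~(1). (Note that $\lambda_y$ depends only on the source complex $y$, so it suffices to check one reaction per source complex.)

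Assertion~(2) follows from (1) by contradiction: suppose $y^*_m\to y'\in\Re$ with $y'\neq y^*_{m+1}$. Then $y=y^*_m$ is a source complex, and applying \eqref{eq:key for strong tier 1} with this reaction $y^*_m\to y'$ (which lies in $\Re\setminus\{y^*_m\to y^*_{m+1}\}$) gives $\lambda_{y^*_{m_0}}(x^{m_0}_n)\lambda_{y^*_m}(x^m_n)/\lambda_{y^*_m}(x^m_n)=\lambda_{y^*_{m_0}}(x^{m_0}_n)\to 0$. But for $m$ close to $m_0$ in the cyclic order one can relate $\lambda_{y^*_{m_0}}(x^{m_0}_n)$ to the positivity coming from hypothesis~(1) — more directly, taking $m=m_0$ in this argument forces $\lambda_{y^*_{m_0}}(x^{m_0}_n)\to 0$, and then for general $m$ the limit in \eqref{eq:key for strong tier 1} with any fixed auxiliary reaction would be $0/(\text{positive})\cdot\lambda_{y^*_{m_0}}(x^{m_0}_n)$, all consistent, so I need the sharper route: apply the $m=m_0$ instance to get $\lambda_{y^*_{m_0}}(x^{m_0}_n)\to 0$, contradicting hypothesis~(1) which says $\lambda_{y^*_1}(x_n)>0$ for all $n$ only when $m_0=1$; in general I would instead argue that $x^{m_0}_n$ has $\lambda_{y^*_{m_0}}(x^{m_0}_n)$ bounded below by the fact that $y^*_{m_0}\to y^*_{m_0+1}$ is active along the path generated by $R$, using Remark~\ref{rem:why cycle} and $\lambda_{y^*_1}(x_n)>0$. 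This is the step I expect to be the main obstacle: threading the positivity of $\lambda_{y^*_{m_0}}(x^{m_0}_n)$ through the cyclic structure, since the definition only asserts $\lambda_{y^*_1}(x_n)>0$ directly and the other $\lambda_{y^*_m}(x^m_n)>0$ must be derived. The resolution is that by Remark~\ref{rem:why cycle}, since $x_n>y^*_1$ (which follows from $\lambda_{y^*_1}(x_n)>0$ under mass-action) and $R$ is a cycle, the path $(x^1_n,\dots,x^M_n,x^1_n)$ is active, hence $\lambda_{y^*_m}(x^m_n)>0$ for every $m$ and every $n$ — so in fact assertion~(1)'s positivity claim needs no subsequence at all.

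For assertion~(3), fix $y\in\C_s\setminus\{y^*_{m_0}\}$; pick a reaction $y\to y'\in\Re$. Then \eqref{eq:key for strong tier 1} with $m=m_0$ reads $\lambda_{y^*_{m_0}}(x^{m_0}_n)\lambda_y(x^{m_0}_n)/\lambda_{y^*_{m_0}}(x^{m_0}_n)=\lambda_y(x^{m_0}_n)\to 0$. Since $\lambda_y(x^{m_0}_n)$ is a nonnegative integer (a product of falling factorials of the integer coordinates of $x^{m_0}_n$), convergence to $0$ forces $\lambda_y(x^{m_0}_n)=0$ for all large $n$; passing to a subsequence makes this hold for all $n$, and ranging over the finitely many source complexes $y$ gives~(3). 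Finally, assertion~(4): by~(3), for every $y\in\C_s\setminus\{y^*_{m_0}\}$ we have $\lambda_y(x^{m_0}_n)=0$, meaning $x^{m_0}_n\not\ge y$, i.e. some coordinate $(x^{m_0}_n)_i<(y)_i$. I would argue that if $\{i:\sup_n(x_n)_i<\infty\}=\emptyset$, then every coordinate of $x_n$ — hence of $x^{m_0}_n=x_n+\sum_{j<m_0}(y^*_{j+1}-y^*_j)$, which differs from $x_n$ by a fixed vector — is unbounded, so along a suitable subsequence $x^{m_0}_n\ge y$ for any fixed complex $y$ simultaneously (using finiteness of $\C_s$ and a diagonal argument), contradicting~(3) unless $\C_s=\{y^*_{m_0}\}$; but if $\C_s=\{y^*_{m_0}\}$ then $R$ being a cycle of length $M\ge 1$ with all source complexes equal forces $M=1$ and $y^*_1\to y^*_1$, which is excluded since $y\to y\notin\Re$. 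Hence $\{i:\sup_n(x_n)_i<\infty\}\neq\emptyset$. I would present each part as a short numbered paragraph, being explicit about exactly when a subsequence is invoked and reconciling it with the fact that the positivity statements in~(1) and~(2) require none.
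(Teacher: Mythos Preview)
Your proposal is correct and follows essentially the same route as the paper: positivity of $\lambda_{y^*_m}(x^m_n)$ via the cycle structure (the inductive observation $x^m_n\ge y^*_m$), the ratio limit and assertion~(2) by dividing through by $\lambda_{y^*_{m_0}}(x^{m_0}_n)$ in \eqref{eq:key for strong tier 1}, assertion~(3) by specialising $m=m_0$, and assertion~(4) by contradiction with~(3). Two remarks. First, your exposition meanders---you circle through a wrong attempt at positivity before landing on the cycle argument, and your paragraph on~(2) is self-doubting; the clean statement is simply that $\lambda_{y^*_{m_0}}(x^{m_0}_n)\to 0$ contradicts $\lambda_{y^*_{m_0}}(x^{m_0}_n)\ge 1$, which holds because $\lambda_{y}$ is a positive integer once positive. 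You invoke this integer-valuedness in~(3) but never in~(1) or~(2), where you write ``immediately gives $\lambda_y/\lambda_{y^*_m}\to 0$'' and ``contradicting hypothesis~(1)''---both steps need $\lambda_{y^*_{m_0}}(x^{m_0}_n)\ge 1$, not merely $>0$, so say so. Second, your argument for~(3) (nonnegative integer sequence converging to $0$ is eventually $0$) is actually cleaner than the paper's, which passes to a coordinatewise monotone subsequence and argues from the mass-action form; your version is preferable.
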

\begin{example}
For \eqref{eq:main reaction network example}, let $y^*_1=\emptyset$ and $y^*_2=A+B$ . Then $R=(y^*_1 \to y^*_2, y^*_2\to y^*_1)$ is a strong tier-1 cycle along $\{(n,0)\}$ with $m_0=1$. 
As stated in Proposition \ref{prop:strong tier 1}, we can also see that 
\begin{align*}
    &\lim_{n\to \infty}\frac{\lambda_y(n,0)}{\lambda_{\emptyset}(n,0)}=0 \quad \text{for any $y\in \C_s\setminus \{\emptyset\}$ and } \\
    &\lim_{n\to \infty}\frac{\lambda_y(n+1,1)}{\lambda_{A+B}(n+1,1)}=0 \quad \text{for any $y\in \C_s\setminus \{A+B\}$}.
\end{align*}
Also $\lambda_{y}(n,0)\equiv 0$ for any $y\in \C_s\setminus\{\emptyset\}$. \hfill $\triangle$
\end{example}

For a strong tier-1 cycle along $\{x_n\}$, the  subsequence considered in Proposition \ref{prop:strong tier 1} will simplify the arguments for our main results in the following sections. Since the condition \eqref{eq:key for strong tier 1} still holds for any subsequence of $\{x_n\}$, we will consider only a special type of sequences for strong tier-1 cycles from now on. 
\begin{defn}\label{def_tier-seqence}
Let $(\Sp,\C,\Re)$ be a reaction network. An infinite sequence $\{x_n\} \subseteq \mathbb Z^d_{\ge 0}$ is called a \emph{tier sequence} of $(\Sp,\C,\Re)$ if
\begin{enumerate}
\item $\dlim_{n\rightarrow\infty}(x_n)_i \in [0,\infty]$ for each $i$, and $I_{\{x_n\}}:=\{i : \dlim_{n\to \infty}(x_n)_i=\infty\}$ is not empty,
\item for any $n$, $(x_n)_i \le (x_{n+1})_i$ if $i\in I_{\{x_n\}}$ and  $(x_n)_i \equiv c_i$ for some $c_i\in \mathbb Z_{\ge 0}$ otherwise, and
\item for any two complexes $y,y' \in \C_s$, the limit $\displaystyle \lim_{n\to \infty}\frac{\lambda_y(x_n)}{\lambda_{y'}(x_n)}$ exists in $[0,\infty]$ (i.e. the reaction intensities are mutually comparable along $\{x_n\}$).
\end{enumerate}
\end{defn}
Tier-sequences are used in studying qualitative behaviors of stochastic reaction systems with tiers \cite{anderson2020stochastically, anderson2020tier, anderson2018some, anderson2023mixing} although the definition in this paper is slightly different from that of the previously works. 
\begin{rem}
    If $R$ is a strong tier-1 cycle along a tier sequence $\{x_n\}$, then the results in Proposition \ref{prop:strong tier 1} follow without considering a further subsequence.
\end{rem}

Now we show that the existence of a strong tier-1 cycle $R=\{y^*_1\to y^*_2,\dots, y^*_M\to y^*_1\}$ along $\{x_n\}$ implies non-exponential ergodicity of the associated Markov chain $X$ on the state space $\mathbb S$ if $X$ is ergodic and $\{x_n\}$ lying in the state space. Similarly to the derivation shown in Example \ref{ex:strong tier 1}, the main idea underlying the theorem below is to show the conditions in Theorem \ref{thm:maybe main}.

\begin{thm}\label{thm: stong tier1 and irr is enough}
    Let $(\Sp,\C,\Re)$ be a reaction network.
     We assume that 
    $(\Sp,\C,\Re)$ has a \text{\emph{strong tier-1 cycle}} $R=\{y^*_1\rightarrow y^*_2, y^*_2\rightarrow y^*_3, \cdots, y^*_M \rightarrow y^*_{M+1} \} \subseteq \Re$ along a tier sequence $\{x_n\}$ such that $\{x_n\} \subseteq \mathbb S_x$.
    If under a choice of the parameters $\kappa$, the associated Markov chain $X$ for $(\Sp,\C,\Re,\kappa)$ 
 with $X(0)=x$ is ergodic on the state space $\mathbb S_x$, then $X$ is non-exponentially ergodic.  
\end{thm}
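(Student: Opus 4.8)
The plan is to deduce the theorem from Theorem~\ref{thm:maybe main}, taking $x$ as the distinguished state there. Fix $\rho\in(0,\inf_{z\in\mathbb S_x}q_z)$ (nonempty by the standing assumption $\inf_z q_z>0$). For each $n$ I would consider the path $\gamma_n=(x^1_n,x^2_n,\dots,x^M_n,x^{M+1}_n)$ produced by firing $y^*_1\to y^*_2,\dots,y^*_M\to y^*_1$ in order; telescoping gives $x^{M+1}_n=x_n=x^1_n$, so $\gamma_n$ is a cycle. First I would check that $\gamma_n$ is admissible for Theorem~\ref{thm:maybe main}. It is \emph{active}: by Definition~\ref{def:strong tier1}(1), $\lambda_{y^*_1}(x_n)>0$, i.e. $x_n\ge y^*_1$, so by Remark~\ref{rem:why cycle} each transition $x^m_n\to x^{m+1}_n$ is generated by $y^*_m\to y^*_{m+1}$ and hence has positive rate. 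Its states lie in $\mathbb S_x$, since $\{x_n\}\subseteq\mathbb S_x$, $\mathbb S_x$ is irreducible, and each $x^m_n$ is reachable from $x_n$. It avoids $x$ for all large $n$, since $I_{\{x_n\}}\neq\emptyset$ forces some coordinate of $x^m_n$ (a fixed shift of $x_n$) to diverge. And $M\ge 2$ because $y\to y\notin\Re$. So it remains to show $F(\gamma_n,\rho)>1$ for $n$ large.

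For this I would use \eqref{eq:generate function on paths2}, $F(\gamma_n,\rho)=\prod_{m=1}^{M}\frac{q_{x^m_n,x^{m+1}_n}}{q_{x^m_n}-\rho}$, and isolate the factor $m=m_0$, which supplies the ``gain''. By Proposition~\ref{prop:strong tier 1}(3) no reaction with source other than $y^*_{m_0}$ fires at $x^{m_0}_n$, and by Proposition~\ref{prop:strong tier 1}(2) the only reaction with source $y^*_{m_0}$ is $y^*_{m_0}\to y^*_{m_0+1}$; hence $x^{m_0}_n$ has a single outgoing transition, so this factor equals $\frac{q_{x^{m_0}_n}}{q_{x^{m_0}_n}-\rho}=1+\frac{\rho}{q_{x^{m_0}_n}-\rho}\ge 1+a_n$, where $a_n:=\rho/(K\lambda_{y^*_{m_0}}(x^{m_0}_n))$ and $K:=\max_{y\to y'\in\Re}\kappa_{y\to y'}$ (using $q_{x^{m_0}_n}=\kappa_{y^*_{m_0}\to y^*_{m_0+1}}\lambda_{y^*_{m_0}}(x^{m_0}_n)\le K\lambda_{y^*_{m_0}}(x^{m_0}_n)$). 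For $m\neq m_0$ I would write $q_{x^m_n}=q_{x^m_n,x^{m+1}_n}+s^m_n$ and bound the factor below by $\frac{q_{x^m_n,x^{m+1}_n}}{q_{x^m_n}}=\frac{1}{1+s^m_n/q_{x^m_n,x^{m+1}_n}}\ge 1-\epsilon^m_n$, where $\epsilon^m_n:=s^m_n/(\kappa_{y^*_m\to y^*_{m+1}}\lambda_{y^*_m}(x^m_n))\ge s^m_n/q_{x^m_n,x^{m+1}_n}$. The crucial point, using Proposition~\ref{prop:strong tier 1}(2), is that every reaction contributing to $s^m_n$ has source $\neq y^*_m$, hence lies in $\Re\setminus\{y^*_m\to y^*_{m+1}\}$, so the defining limit \eqref{eq:key for strong tier 1} applies termwise and gives $\lambda_{y^*_{m_0}}(x^{m_0}_n)\,\epsilon^m_n\to 0$. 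Multiplying the factors, $F(\gamma_n,\rho)\ge(1+a_n)(1-b_n)$ with $b_n:=\sum_{m\neq m_0}\epsilon^m_n$.

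To finish I would observe that $\lambda_{y^*_{m_0}}(x^{m_0}_n)$ is a positive integer (positive by Proposition~\ref{prop:strong tier 1}(1), and a product of falling factorials), so $\lambda_{y^*_{m_0}}(x^{m_0}_n)\ge 1$; hence $b_n\to 0$ and $b_n/a_n=(K/\rho)\lambda_{y^*_{m_0}}(x^{m_0}_n)\,b_n\to 0$. Thus for $n$ large, $b_n<1/2$ and $b_n<a_n/2$, so $(1+a_n)(1-b_n)-1=a_n(1-b_n)-b_n\ge a_n/2-b_n>0$, i.e. $F(\gamma_n,\rho)>1$. Taking $\gamma_\rho:=\gamma_n$ for such an $n$ verifies both hypotheses of Theorem~\ref{thm:maybe main} for the state $x$; since $\rho$ was arbitrary, $X$ is non-exponentially ergodic.

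I expect the main obstacle to be the bookkeeping in the middle step --- pinning down exactly which reactions contribute to the ``extra'' outgoing mass $s^m_n$ at each intermediate state $x^m_n$ and confirming that each of them is covered by \eqref{eq:key for strong tier 1} and by the structural conclusions of Proposition~\ref{prop:strong tier 1}. A lesser subtlety is that $\lambda_{y^*_{m_0}}(x^{m_0}_n)$ need not tend to infinity, so the ``gain'' factor $1+a_n$ need not shrink to $1$; but this is harmless, since the estimate only uses $\lambda_{y^*_{m_0}}(x^{m_0}_n)\ge 1$ and $b_n/a_n\to 0$, which hold whether or not that quantity is bounded.
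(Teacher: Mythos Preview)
Your proof is correct and follows essentially the same route as the paper: reduce to Theorem~\ref{thm:maybe main} via the cyclic path $\gamma_n=(x^1_n,\dots,x^M_n,x^1_n)$, isolate the $m_0$ factor as the ``gain'' using Proposition~\ref{prop:strong tier 1}(2),(3), and control the remaining factors with \eqref{eq:key for strong tier 1}. Two minor differences are worth noting. First, the paper invokes property~3 of the tier sequence to pick a dominant complex $\bar y_m$ and bound the leftover intensity $\sum_{y\to y'\neq y^*_m\to y^*_{m+1}}\lambda_{y\to y'}(x^m_n)\le C_m\lambda_{\bar y_m}(x^m_n)$ before applying \eqref{eq:key for strong tier 1}; you bypass this by summing \eqref{eq:key for strong tier 1} directly over the finitely many reactions, which is cleaner and shows the tier-sequence comparability hypothesis is not actually needed at this step. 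Second, your use of the fact that $\lambda_{y^*_{m_0}}(x^{m_0}_n)$ is a \emph{positive integer} (hence $\ge 1$) to obtain $b_n\to 0$ and $b_n/a_n\to 0$ simultaneously is more transparent than the paper's $\varepsilon$-choosing endgame with the crude inequality $(1-x)^k\ge 1-2kx$.
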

The assumption $\{x_n\} \subseteq \mathbb S_x$ in Theorem \ref{thm: stong tier1 and irr is enough} will be essential for non-exponential ergodicity (in Appendix \ref{appdx:prove lemma} we provide simple examples to highlight the importance of the condition $\{x_n\} \subseteq \mathbb S_x$). When a strong tier-1 sequence exists along $\{x_n\}$ such that $\{x_n\}$ is not lying on $\mathbb S_x$, however, if weak reversibility (Definition \ref{def:WR}) and an additional reachability condition holds, we can use $\{x_n\}$ to construct a new tier-sequence $\{\bar x_n\} \subseteq \mathbb S_x$ along which the same $R$ is a strong tier-1 cycle. Therefore non-exponential ergodicity still follows. We provide more details about this in Appendix \ref{appdx:prove lemma}.

\subsection{Structural conditions for non-exponential ergodicity}\label{subsec:structural}
Now we introduce our main results about classes of reaction networks that contain a strong tier-1 cycle and eventually admit non-exponential ergodicity. The key idea for following results is that totally ordered complexes by $<$ can form a strong tier-1 cycle. For the sake of readability, we also provide the proofs of Corollary \ref{cor:structure1}--\ref{cor:structure4} of this section later in Section \ref{sec:proofs}.  
 \begin{cor}\label{cor:structure1}
   Let $(\Sp,\C,\Re)$ be a weakly reversible reaction network such that there exists a subset of reactions of the form $\{y^*_1\rightarrow y^*_2, y^*_2\rightarrow y^*_3, \cdots, y^*_{M-1}\to y^*_{M}, y^*_M \rightarrow y^*_1 \} \subseteq \Re$.  Suppose that 
    \begin{enumerate}
        \item $\C$ is totally ordered by $<$. That is, for any pair of $y, y' \in \C$, either $y < y'$ or $y' < y$,
        \item $\Re_{y^*_m} = \{y^*_m \to y^*_{m+1}\}$ for each $m \in \{1,2,\dots, M\}$, 
        \item $y^*_{m_0}=\emptyset$ for some $m_0$, and
        \item for the elementary vectors $e_i$'s, we have $U:=\{i: e_i \in \text{span}\{y'-y : y\to y'\in \Re\}\} \neq \emptyset$.
    \end{enumerate}

    Let $x=y^*_{m'}$ for some $m'$. If under a choice of the parameters $\kappa$, the associated Markov chain $X$ for $(\Sp,\C,\Re,\kappa)$ 
 with $X(0)=x$ is ergodic on $\mathbb S_x$, then $X$ is non-exponentially ergodic.
\end{cor}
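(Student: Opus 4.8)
The plan is to reduce Corollary \ref{cor:structure1} to Theorem \ref{thm: stong tier1 and irr is enough} by exhibiting an appropriate tier sequence $\{x_n\}$ along which $R = \{y^*_1\to y^*_2,\dots,y^*_M\to y^*_1\}$ is a strong tier-1 cycle with the initial state $x$ lying in the relevant region. First I would use hypothesis (4): since $U\neq\emptyset$, pick $i\in U$, so $e_i$ is a nonnegative integer combination (after clearing denominators and using weak reversibility to reverse signs within each connected component) of the reaction vectors $y'-y$. The idea is that by going around cycles in the weakly reversible network one can increase coordinate $i$ without bound while keeping the other coordinates within a bounded set; this produces a candidate sequence of states whose $i$-th coordinate tends to $\infty$.

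Next, I would construct the tier sequence explicitly. Set $x = y^*_{m'}$, and define $x_n = x + n v$ where $v$ is a suitable vector supported on coordinates in $U$ witnessing that $e_i$-type growth (more carefully, one takes $x_n$ to be $x$ plus a large multiple of a loop-sum of reaction vectors so that $x_n\in\mathbb S_x$ automatically, since $\mathbb S_x$ is closed under adding reaction-vector loops that stay nonnegative). One must check the three conditions of Definition \ref{def_tier-seqence}: condition (1) holds because the coordinates in $U$ grow and the rest are eventually constant; condition (2) is arranged by monotonicity of the construction; condition (3), mutual comparability of the $\lambda_y(x_n)$, follows because along $x_n = x+nv$ each $\lambda_y(x_n)$ is (eventually) a polynomial in $n$ of degree $\sum_{i\in U}(y)_i$ weighted by the fixed small coordinates, so ratios converge in $[0,\infty]$. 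Then I would verify the strong tier-1 cycle conditions of Definition \ref{def:strong tier1}: condition (1), $\lambda_{y^*_1}(x_n)>0$, holds for $n$ large because $x_n\geq y^*_1$ eventually (here using $x = y^*_{m'}$ to guarantee we start at a complex so the source complexes are "present" along the path — in particular $x^m_n = x_n + \sum_{j<m}(y^*_{j+1}-y^*_j)$ equals $y^*_m$ shifted by $nv$, hence $\geq y^*_m$). For condition (2), take $m_0$ with $y^*_{m_0}=\emptyset$ from hypothesis (3): then $\lambda_{y^*_{m_0}}(x^{m_0}_n)=\kappa\cdot 1$ is bounded, while for any competing reaction $y\to y'\in\Re\setminus\{y^*_m\to y^*_{m+1}\}$ we need $\lambda_y(x^m_n)/\lambda_{y^*_m}(x^m_n)\to 0$. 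Hypothesis (2), $\Re_{y^*_m}=\{y^*_m\to y^*_{m+1}\}$, means such a $y$ must satisfy $y\neq y^*_m$; then total ordering of $\C$ by $<$ (hypothesis (1)) is exactly what forces the ratio to vanish: either $y < y^*_m$, in which case $\lambda_{y^*_m}$ has strictly higher polynomial degree in $n$ (in the growing coordinates) and dominates, or $y > y^*_m$, in which case $\lambda_y(x^m_n)=0$ for all $n$ because $x^m_n = y^*_m + nv$ and $y\not\leq y^*_m$ while $v$ is supported on $U$ — one needs to choose $v$ small enough / in the right cone so that $y > y^*_m$ still implies $y\not\leq x^m_n$, which is the delicate point.

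The main obstacle I anticipate is precisely this last dichotomy: making the total order on $\C$ do the work of killing the competing reactions at \emph{every} state $x^m_n$ along the cycle, uniformly in $n$. When $y < y^*_m$ the degree comparison is clean, but when $y > y^*_m$ one must ensure the growth direction $v$ (which lives in $\mathrm{span}\{y'-y\}$ and is supported on $U$) does not accidentally make $x^m_n \geq y$ for large $n$; this requires choosing $v$ so that its support is disjoint from, or lies below, the coordinates where $y$ exceeds $y^*_m$. I would handle this by taking $v$ to be a positive combination of reaction vectors of cycles whose net effect is $e_i$ for a \emph{single} $i\in U$ (using hypothesis (4) to get one such $i$), and noting that since $\C$ is totally ordered and finite, only finitely many complexes exceed $y^*_m$; for each, being totally ordered above $y^*_m$ means it differs in some coordinate, and one argues either that coordinate is not $i$ (so the reaction stays unavailable) or handles it via the degree argument. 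Once all four conditions of Definition \ref{def:strong tier1} are checked and $\{x_n\}\subseteq\mathbb S_x$ is confirmed, Theorem \ref{thm: stong tier1 and irr is enough} applies directly and gives non-exponential ergodicity of $X$, completing the proof.
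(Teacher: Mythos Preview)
Your overall strategy—build a tier sequence along which $R$ is a strong tier-1 cycle and then invoke Theorem~\ref{thm: stong tier1 and irr is enough}—matches the paper. But the paper executes it through two modular pieces you are trying to do simultaneously, and your attempt to merge them leaves two soft spots.

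First, the ``main obstacle'' you flag (the case $y>y^*_m$) is not actually delicate once you commit to growing a \emph{single} coordinate $i_1\in U$ and freezing the rest at $(y^*_1)_j$. The paper does exactly this via Theorem~\ref{thm: structural condition}: take $u=e_{i_1}$ and $x_n$ with $(x_n)_{i_1}=n+(y^*_1)_{i_1}$, $(x_n)_j=(y^*_1)_j$ for $j\neq i_1$. Then $x^m_n$ has $(x^m_n)_j=(y^*_m)_j$ for every $j\neq i_1$. Now total order (hypothesis~1) says that $y>y^*_m$ means $(y)_j>(y^*_m)_j$ for \emph{all} $j$, so in particular for any $j\neq i_1$ we get $(y)_j>(x^m_n)_j$ and $\lambda_y(x^m_n)=0$ identically. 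There is no need to ``choose $v$ in the right cone'' or argue case-by-case over finitely many complexes; the total order plus the single-coordinate growth kills this case outright. The paper packages this as the inner-product check $\langle u,y\rangle<\langle u,y^*_m-y^*_{m_0}\rangle$, which only needs to be verified for $y$ with $(y)_j\le(y^*_m)_j$ on $j\neq i_1$, and then total order forces $y<y^*_m$ so the inequality is immediate (using $y^*_{m_0}=\emptyset$).

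Second, and more seriously, your claim that $x_n\in\mathbb S_x$ ``automatically, since $\mathbb S_x$ is closed under adding reaction-vector loops that stay nonnegative'' is a genuine gap. Writing $x_n-x$ as an integer combination of reaction vectors does not by itself give an \emph{active} path from $x$ to $x_n$: you need every intermediate state to lie in $\mathbb Z^d_{\ge 0}$ and every transition rate along the way to be positive. The paper does \emph{not} build $x_n$ inside $\mathbb S_x$ directly; instead it first constructs the simple sequence above (which may lie outside $\mathbb S_x$) and then invokes Lemma~\ref{lem:new desired tier sequence} to perturb to a nearby tier sequence $\{\bar x_n\}\subseteq\mathbb S_x$ along which $R$ remains a strong tier-1 cycle. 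That lemma needs two inputs, both immediate here: $x_n-x_0\in\mathrm{span}\{y'-y\}$ (clear, since $x_n-x_0$ is a multiple of $e_{i_1}$ and $i_1\in U$) and the existence of $y^*_{m_1}<y^*_{m_2}$ in the cycle (immediate from total order). The reachability argument inside that lemma is nontrivial—it uses weak reversibility and an accessibility lemma to route through states where enough copies are present—so you should not wave it away.
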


\begin{example}\label{ex:structural1}
    Consider the following reaction network $(\Sp, \C, \Re)$.
\begin{align}
\begin{split}
    &\emptyset \ \ \longrightarrow \ \ 4A+4B+3C \\[-0.5ex]
    &  \quad \displaystyle \nwarrow \hspace{1.5cm}  \swarrow  \qquad \qquad 2A+2B+2C \rightleftharpoons 6A+5B+4C. \\[-0.5ex]
& \hspace{0.6cm} A+B+C \hspace{1.2cm} 
\end{split}
\end{align}  
It is known that  the associated Markov chain $X$ chains is ergodic under any choice of parameters by \cite[Theorem 4.2]{AndProdForm} (more details about this are provided in Section \ref{sec:detail complex}).  
Now we show that assumptions 1--4 of Corollary \ref{cor:structure1} hold for $(\Sp, \C, \Re)$. First of all, it is clear that $\C$ is totally ordered by $<$. Denote $y^*_1 = \emptyset$, $y^*_2 = 4A +4B +3C$, and $y^*_3 = A + B +C$.
For $R = (y^*_1 \to y^*_2, y^*_2 \to y^*_3, y^*_3 \to y^*_1)$,
assumptions 2 and 3 hold. Furthermore, $\text{span}\{y\to y': y\to y'\in \Re\}=\mathbb R^3$, which guarantees assumption 4. Hence, $X$ with $X(0)=\vec{0}$ is non-exponentially ergodic by Corollary \ref{cor:structure1}.
\hfill $\triangle$
\end{example}

For reaction networks consisting of only two species, we can alternate the conditions in Corollary \ref{cor:structure1}.
\begin{cor}\label{cor:structure2}
    Let $(\Sp,\C,\Re)$ be a reaction network such that $\Sp = \{S_1, S_2\}$, 
   $\C = \{y^*_1, y^*_2, \dots, y^*_M\}$, and $\Re = \{y^*_1\rightarrow y^*_2, y^*_2\rightarrow y^*_3, \cdots, y^*_{M-1}\to y^*_{M}, y^*_M \rightarrow y^*_1 \}$. Suppose that
    \begin{enumerate}
        \item $\C$ can be written as $\C=\{\bar y_1,\bar y_2,\dots,\bar y_M\}$ with $\bar y_m < \bar y_{m+1}$ for each $m=1,2,\dots,M-1$ (i.e. $\C$ is totally ordered by $<$),  
        \item there exists $i_1\in \{1,2\}$ such that $(\bar y_k)_{i_1}<(\bar y_{k'})_{i_1}-(\bar y_1)_{i_1}$ for any $1\le m<m'\le M$, and  
        \item $\text{dim}(\text{span}\{y'-y : y\to y'\in \Re\}) =2$.
    \end{enumerate}
    Let $x=y^*_{m'}$ for some $m'$. If under a choice of the parameters $\kappa$, the associated Markov chain $X$ for $(\Sp,\C,\Re,\kappa)$ 
 with $X(0)=x$ is ergodic on $\mathbb S_x$, then $X$ is non-exponentially ergodic.
\end{cor}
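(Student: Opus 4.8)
The plan is to deduce the statement from Theorem~\ref{thm: stong tier1 and irr is enough}: it suffices to produce a strong tier-1 cycle along a tier sequence $\{x_n\}\subseteq\mathbb S_x$, since the assumed ergodicity of $X$ then forces non-exponential ergodicity. Because $\Re$ is the single directed cycle $y^*_1\to y^*_2\to\cdots\to y^*_M\to y^*_1$ on all of $\C$, the only candidate is $R$ itself. After cyclically relabelling the reactions I may assume $y^*_1=\bar y_M$ (the $<$-maximal complex); let $m_0$ be the index with $y^*_{m_0}=\bar y_1$ (the $<$-minimal complex), let $i_1$ be the coordinate furnished by assumption~2, and $i_2$ the other coordinate. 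The sequence I would use is $x_n:=\bar y_M+n\,e_{i_1}$. Since $\C$ is totally ordered and $<$ is strict in both coordinates, the telescoping identity gives $x^m_n=x_n+y^*_m-y^*_1=y^*_m+n\,e_{i_1}$, so for large $n$ the active source complexes at $x^m_n$ are exactly $\{\bar y_\ell:\bar y_\ell\le y^*_m\}$, with $y^*_m$ the dominant one; in particular every source complex is active at $x^1_n=x_n$, while only $\bar y_1$ is active at $x^{m_0}_n=\bar y_1+n\,e_{i_1}$. A direct check against Definition~\ref{def_tier-seqence} shows $\{x_n\}$ is a tier sequence: $I_{\{x_n\}}=\{i_1\}$, the $i_2$-coordinate is constant, and $\lambda_y(x_n)\sim D_y\,n^{(y)_{i_1}}$ with $D_y>0$ for every $y\in\C_s=\C$ (here $(\bar y_M)_{i_2}\ge(y)_{i_2}$), so all ratios $\lambda_y(x_n)/\lambda_{y'}(x_n)$ converge in $[0,\infty]$.

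Next I would verify that $R$ is a strong tier-1 cycle along $\{x_n\}$ with this $m_0$. Condition~1 of Definition~\ref{def:strong tier1} is clear since $x_n\ge\bar y_M=y^*_1$. For condition~2, fix $m$ and a reaction with source $y^*_{m'}$, $m'\ne m$. If $y^*_{m'}\not\le y^*_m$ then $\lambda_{y^*_{m'}}(x^m_n)=0$ and \eqref{eq:key for strong tier 1} is trivial; if $y^*_{m'}<y^*_m$, the asymptotics above give
\[
  \frac{\lambda_{y^*_{m_0}}(x^{m_0}_n)\,\lambda_{y^*_{m'}}(x^m_n)}{\lambda_{y^*_m}(x^m_n)}
  \;=\;\Theta\!\left(n^{\,(\bar y_1)_{i_1}+(y^*_{m'})_{i_1}-(y^*_m)_{i_1}}\right),
\]
and the exponent is negative precisely because assumption~2, applied to the pair $y^*_{m'}=\bar y_a<\bar y_b=y^*_m$, gives $(y^*_{m'})_{i_1}<(y^*_m)_{i_1}-(\bar y_1)_{i_1}$. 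Hence \eqref{eq:key for strong tier 1} holds for every $m$. This is the direct analogue of the computation in Example~\ref{ex:strong tier 1}, with the $<$-minimal complex $\bar y_1$ in the role played by $\emptyset$ in Corollary~\ref{cor:structure1}; assumption~2 is exactly what absorbs the polynomial growth $\lambda_{\bar y_1}(x^{m_0}_n)\sim n^{(\bar y_1)_{i_1}}$ of the holding-time factor, which is why the hypothesis ``$\emptyset\in\C$'' of Corollary~\ref{cor:structure1} can be dropped here.

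The remaining — and, I expect, hardest — point is $\{x_n\}\subseteq\mathbb S_x$. The network is weakly reversible (a single directed cycle is strongly connected), and every complex, in particular $\bar y_M$, lies in $\mathbb S_x$ (traverse the cycle from $x=y^*_{m'}$). By assumption~3 the reaction vectors $\{y^*_{m+1}-y^*_m\}$ span $\mathbb R^2$, hence generate a finite-index sublattice $L\subseteq\mathbb Z^2$; the intersection of a full-rank sublattice with the $i_1$-axis is a nonzero rank-one lattice, so $\delta e_{i_1}\in L$ for some integer $\delta\ge1$, and passing to the subsequence $(x_{\delta k})_{k\ge1}$ keeps every term in the single coset $\bar y_M+L$ (to which $x$ also belongs, so that $\mathbb S_x\subseteq(\bar y_M+L)\cap\mathbb Z^2$). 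One then invokes the weak-reversibility/reachability machinery indicated after Theorem~\ref{thm: stong tier1 and irr is enough} and developed in Appendix~\ref{appdx:prove lemma}, whose reachability hypothesis is ensured here by assumption~3, to obtain a tier sequence $\{\bar x_n\}\subseteq\mathbb S_x$ with bounded $i_2$-coordinate and $i_1$-coordinate tending to infinity along which $R$ is still a strong tier-1 cycle (the verification above used only that the $i_1$-coordinate is unbounded); Theorem~\ref{thm: stong tier1 and irr is enough} together with the ergodicity hypothesis then yields that $X$ is non-exponentially ergodic. The genuinely delicate part is this reachability claim — that $\mathbb S_x$ meets the line $\{z:(z)_{i_2}=(\bar y_M)_{i_2}\}$ in points with arbitrarily large $i_1$-coordinate — and it is where assumption~3 (full-dimensional stoichiometry) is really used; should the appendix lemma not apply verbatim, one argues directly by rewriting $\delta e_{i_1}$ as a \emph{non-negative} integer combination of the reaction vectors (using $\sum_m(y^*_{m+1}-y^*_m)=\vec{0}$) and realizing this displacement from $\bar y_M$ by firing the cycle reactions in an order that keeps the state above the relevant source complexes, then iterating to climb along $e_{i_1}$.
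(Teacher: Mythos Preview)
Your argument is correct and follows essentially the same route as the paper: pick a tier sequence moving to infinity along the $i_1$-axis with constant $i_2$-coordinate, verify that $R$ is a strong tier-1 cycle along it (your direct estimate $\Theta(n^{(\bar y_1)_{i_1}+(y^*_{m'})_{i_1}-(y^*_m)_{i_1}})$ is exactly what condition~2 is designed to make negative), and then use weak reversibility plus full-dimensional stoichiometry to pull the sequence into $\mathbb S_x$. The paper does the same, but packages the first step as an application of Theorem~\ref{thm: structural condition} with $u=e_{i_1}$ and $m_0$ chosen so that $y^*_{m_0}=\bar y_1$, and handles the reachability step by a direct appeal to Lemma~\ref{lem:new desired tier sequence} rather than your lattice-and-coset sketch.

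Two minor points where you diverge unnecessarily. First, the cyclic relabelling $y^*_1=\bar y_M$ is not needed: the paper simply takes $x_n=y^*_1+n\,e_{i_1}$ with the original $y^*_1$, and Theorem~\ref{thm: structural condition} (equivalently your direct computation) goes through because one only needs $\lambda_{y^*_1}(x_n)>0$, not that every complex be active at $x_n$. Second, your reachability paragraph is more elaborate than necessary: the paper first reduces WLOG to $x=y^*_1$ (since all $y^*_{m'}$ lie in one closed communicating class by Lemma~\ref{lem:cycle makes active paths}), and then Lemma~\ref{lem:new desired tier sequence} applies verbatim---its hypothesis~1 is assumption~3 here, and its hypothesis~2 is the total ordering---so your hand-rolled ``rewrite $\delta e_{i_1}$ as a non-negative combination and fire reactions'' argument, while correct in spirit, is exactly what that lemma already encapsulates.
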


\begin{example}\label{ex: structure 2}
    Consider the following reaction network $(\Sp, \C, \Re)$.
  \begin{align}
\begin{split}
    &A+B \ \ \longrightarrow \ \ 6A+3B. \\[-0.5ex]
    &  \quad \displaystyle \nwarrow \hspace{1.5cm}  \swarrow  \\[-0.5ex]
& \hspace{1cm} 3A+2B \hspace{1.2cm} 
\end{split}
\end{align}

By the same argument in Example \ref{ex:structural1}, the associated Markov chain $X$ is ergodic under any choice of parameters. Now we show that assumptions 1--3 of Corollary \ref{cor:structure2} hold for $(\Sp, \C, \Re)$.  
First of all, denote $S_1 = A$, $S_2 = B$, $y^*_1 = A+B$, $y^*_2 = 6A + 3B$, and $y^*_3 = 3A+2B$. Then those complexes are ordered as $\bar y_1 = y^*_1$, $\bar y_2 = y^*_3$, and $\bar y_3 = y^*_2$ so that condition 1 holds.
Secondly, with $i_1 = 1$ and $i_2=2$, condition 2  follows. Obviously condition 3 holds. Therefore, if $X(0) = (1,1)$, then $X$ is non-exponentially ergodic by Corollary \ref{cor:structure2}.     
\hfill $\triangle$
\end{example}

We slightly generalize the conditions of Corollary \ref{cor:structure2} allowing additional reactions whose intensities are weak enough (even equal to $0$) along $\{x_n\}$.
The function $f$, defined in the following corollary, offers a criterion for determining if a reaction is sufficiently weak.

\begin{cor}\label{cor:structure3}
Let $(\Sp,\C,\Re)$ be a weakly reversible reaction network such that $\Sp = \{S_1, S_2\}$, 
    and $\{y^*_1\rightarrow y^*_2, y^*_2\rightarrow y^*_3, \cdots, y^*_{M-1}\to y^*_{M}, y^*_M \rightarrow y^*_1 \} \subseteq \Re$. Suppose that
    \begin{enumerate}
        \item $\{y^*_1,y^*_2,\dots,y^*_M\}$ can be written as $\{\bar y_1,\bar y_2,\dots,\bar y_M\}$ with $\bar y_m < \bar y_{m+1}$ for each $m=1,2,\dots,M-1$,  
        \item there exists $i_1\in \{1,2\}$ such that $(\bar y_k)_{i_1}<(\bar y_{k'})_{i_1}-(\bar y_1)_{i_1}$ for any $1\le m<m'\le M$,   
        \item $\text{dim}(\text{span}\{y'-y : y\to y'\in \Re\}) =2$,
        \item for each $m \in \{1,2,\dots,M\}$, $\Re_{y^*_m} = \{y^*_m \to y^*_{m+1}\}$, and
        \item letting $i_2\in \{1,2\}$ such that $i_2\neq i_1$,  we further assume that $ f((y)_{i_1} + (\bar y_1)_{i_1})<(y)_{i_2}$ for each $y \in \C \setminus \{\bar y_1\dots,\bar y_M\}$, where
    $f: \Z_{\ge 0} \to \Z_{\ge 0}$ is a function such that 
 \begin{align}
     f(\ell) =\begin{cases}\label{eq:function f}
         0 \quad &\text{if $\ell \in [0,(\bar y_1)_{i_1})$}\\
         (\bar y_m)_{i_2} \quad &\text{if $\ell\in [(\bar y_m)_{i_1}, (\bar y_{m+1})_{i_1}) $ for some $m\in \{1,2,\dots,M-1\}$}\\
         (\bar y_M)_{i_2} &\text{if $\ell\in [(\bar y_M)_{i_1},\infty)$}.
     \end{cases}
 \end{align} 
    \end{enumerate}
    Let $x=y^*_{m'}$ for some $m'$. If under a choice of the parameters $\kappa$, the associated Markov chain $X$ for $(\Sp,\C,\Re,\kappa)$ 
 with $X(0)=x$ is ergodic on $\mathbb S_x$, then $X$ is non-exponentially ergodic.
\end{cor}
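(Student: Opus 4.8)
The plan is to reduce Corollary \ref{cor:structure3} to Theorem \ref{thm: stong tier1 and irr is enough} by exhibiting a tier sequence $\{x_n\}$ along which $R=(y^*_1\to y^*_2,\dots,y^*_M\to y^*_1)$ is a strong tier-1 cycle, and then checking that $\{x_n\}$ (or a translate of it) lies in $\mathbb S_x$. This mirrors the proof of Corollary \ref{cor:structure2}, the only genuinely new ingredient being the handling of the extra reactions $\Re\setminus\{y^*_m\to y^*_{m+1}\}_m$ that are permitted here; condition 5 and the function $f$ are precisely what guarantees those extra reactions are negligible along the sequence we build.

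First I would construct the sequence. As in Corollary \ref{cor:structure2}, the idea is to take $x_n$ to grow in the coordinate $i_2$ (the one ``not controlled'' by $i_1$) while keeping the $i_1$-coordinate fixed at $(\bar y_1)_{i_1}$: concretely, set $(x_n)_{i_1}=(\bar y_1)_{i_1}$ and $(x_n)_{i_2}=n$. By condition 2 the complexes $\bar y_1,\dots,\bar y_M$ are totally ordered and strictly increasing in the $i_1$-coordinate after subtracting $(\bar y_1)_{i_1}$, which forces $\lambda_{\bar y_1}(x_n)>0$ for all $n$ (since $x_n\ge \bar y_1$ componentwise, using $(x_n)_{i_2}=n$ large and $(\bar y_1)_{i_1}$ matched exactly) while $\lambda_{\bar y_k}(x_n)=0$ for $k\ge 2$ because the $i_1$-coordinate of $x_n$ is too small to support $\bar y_k$. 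This gives $m_0$: take $y^*_{m_0}=\bar y_1$. Next I would verify that $\{x_n\}$ is a tier sequence in the sense of Definition \ref{def_tier-seqence}: passing to a subsequence makes all ratios $\lambda_y(x_n)/\lambda_{y'}(x_n)$ converge in $[0,\infty]$ (finitely many source complexes), and $I_{\{x_n\}}=\{i_2\}\ne\emptyset$. Then I would check the defining limit \eqref{eq:key for strong tier 1} of a strong tier-1 cycle at each $x^m_n=x_n+\sum_{j<m}(y^*_{j+1}-y^*_j)$: for $m=m_0$ the numerator vanishes identically by the argument just given (so the limit is trivially $0$); for $m\ne m_0$ one compares the expected holding time $1/\lambda_{\bar y_1}(x_n)\asymp n^{-(\bar y_1)_{i_2}}$ against $\lambda_y(x^m_n)/\lambda_{y^*_m}(x^m_n)$, and the point is that $\lambda_{y^*_m}(x^m_n)$ is the dominant intensity at $x^m_n$ so this ratio is either $0$ (for competing source complexes that aren't supported at $x^m_n$) or decays like a negative power of $n$ — here condition 5 via $f$ is exactly the bookkeeping that the ``off-cycle'' source complexes $y\notin\{\bar y_1,\dots,\bar y_M\}$ fail to be supported at $x^m_n$, or are dominated, because their $i_2$-coordinate requirement $(y)_{i_2}>f((y)_{i_1}+(\bar y_1)_{i_1})$ clashes with how the sequence is laid out. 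The product of these with $\lambda_{\bar y_1}(x^{m_0}_n)\to 0$ yields the required $0$ limit.

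Once $R$ is a strong tier-1 cycle along the tier sequence $\{x_n\}$, it remains to place the sequence in the state space. Take $x=y^*_{m'}$; the translation-invariance of the reachability class (generated by $\{y'-y:y\to y'\in\Re\}$, of dimension $2$ by condition 3) together with weak reversibility lets us invoke the same reasoning as in Corollary \ref{cor:structure2}: since the stoichiometric span is all of $\mathbb Z^2$ up to a sublattice, and using weak reversibility to move freely within a connected component, one can find a translate $\{\bar x_n\}$ of $\{x_n\}$ (differing by a fixed vector in the stoichiometric subspace) with $\{\bar x_n\}\subseteq\mathbb S_x$, and $R$ is still a strong tier-1 cycle along $\{\bar x_n\}$ because conditions \eqref{eq:key for strong tier 1} are insensitive to a bounded shift. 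Then Theorem \ref{thm: stong tier1 and irr is enough} (or, if needed, the version in Appendix \ref{appdx:prove lemma} handling $\{x_n\}\not\subseteq\mathbb S_x$) applies and gives non-exponential ergodicity of $X$, provided $X$ is ergodic — which is the standing hypothesis.

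The main obstacle I anticipate is the verification of \eqref{eq:key for strong tier 1} for the extra reactions in the $M\ge 3$ case: one must show that at \emph{every} intermediate state $x^m_n$ of the cycle, no off-cycle reaction $y\to y'$ with $y\in\C$ becomes competitive, and this is where the somewhat opaque staircase function $f$ does its work. I would want to argue carefully that $x^m_n$ has $i_1$-coordinate equal to $(\bar y_m)_{i_1}$ (this follows from $x^1_n=x_n$ having $i_1$-coordinate $(\bar y_1)_{i_1}$ and the cycle reactions shifting it by $(\bar y_{m+1})_{i_1}-(\bar y_m)_{i_1}$ — here I'd need condition 1's ordering to identify which $\bar y$ each $y^*$ is), so that $f((y)_{i_1}+(\bar y_1)_{i_1})$ is exactly $(\bar y_{m})_{i_2}$ or thereabouts, and condition 5 then forces $(y)_{i_2}>(\bar y_m)_{i_2}\ge (x^m_n)_{i_2}$ whenever $(y)_{i_1}\le (\bar y_m)_{i_1}$, killing $\lambda_y(x^m_n)$; the remaining case $(y)_{i_1}>(\bar y_m)_{i_1}$ is handled because then $\lambda_{y^*_m}$ itself is of strictly higher order in the growing coordinate. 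Getting all the index-chasing between $\{y^*_m\}$, $\{\bar y_m\}$, and the intermediate states $x^m_n$ to line up cleanly is the delicate part; the rest is bookkeeping already rehearsed in the proofs of Corollaries \ref{cor:structure1} and \ref{cor:structure2}.
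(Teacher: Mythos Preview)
Your overall architecture---build a tier sequence along which $R$ is a strong tier-1 cycle, then push it into $\mathbb S_x$ via weak reversibility and Theorem~\ref{thm: stong tier1 and irr is enough}---matches the paper's. But you have the roles of $i_1$ and $i_2$ reversed, and this is not cosmetic: it breaks the argument.

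The paper grows the $i_1$-coordinate and freezes the $i_2$-coordinate, taking $x_n=(n+(y^*_1)_{i_1},(y^*_1)_{i_2})$ (after relabeling so $i_1=1$). It then applies Theorem~\ref{thm: structural condition} with $u=e_{i_1}$: for a competing source complex $y$ with $(y)_{i_2}\le (y^*_m)_{i_2}$, condition~5 gives $f((y)_{i_1}+(\bar y_1)_{i_1})<(y)_{i_2}\le (y^*_m)_{i_2}$, and the staircase shape of $f$ then forces $(y)_{i_1}+(\bar y_1)_{i_1}<(y^*_m)_{i_1}$, i.e.\ $\langle u,y\rangle<\langle u,y^*_m-\bar y_1\rangle$. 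So condition~5 converts an $i_2$-bound into the needed $i_1$-bound; this only works because the sequence grows in $i_1$.

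You instead freeze $(x_n)_{i_1}=(\bar y_1)_{i_1}$ and send $(x_n)_{i_2}=n\to\infty$. Then the limit in \eqref{eq:key for strong tier 1} compares powers of $n$ in the $i_2$-direction, so what you would need is $(y)_{i_2}<(y^*_m)_{i_2}-(\bar y_1)_{i_2}$. Nothing in the hypotheses gives this: condition~2 is stated for $i_1$ only, and condition~5 is a \emph{lower} bound on $(y)_{i_2}$, which is useless when the $i_2$-coordinate of the sequence is itself unbounded. Concretely, your claim ``$(\bar y_m)_{i_2}\ge (x^m_n)_{i_2}$'' is false for large $n$. The main running example \eqref{eq:main reaction network example} already exhibits the failure: with $\bar y_1=\emptyset$, $\bar y_2=A+B$, $i_1=1$, $i_2=2$, your sequence is $x_n=(0,n)$, and for the off-cycle source $y=B$ one has $\lambda_y(x_n^{m_0})=n\not\equiv 0$, contradicting the necessary property in Proposition~\ref{prop:strong tier 1}(3). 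Swap the coordinates back and your outline becomes essentially the paper's proof (which routes through Theorem~\ref{thm: structural condition} and Lemma~\ref{lem:new desired tier sequence} rather than verifying Definition~\ref{def:strong tier1} by hand, but that is a packaging difference, not a mathematical one).
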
 

We visualize the location of the complexes $y\in \C\setminus\{\bar y_1,\dots,\bar y_M\}$ in Figure \ref{fig:3} by viewing $y$ as a vector in $\mathbb Z^d_{\ge 0}$
\begin{figure}[!h]
    \centering
    \includegraphics[width=0.8\textwidth]{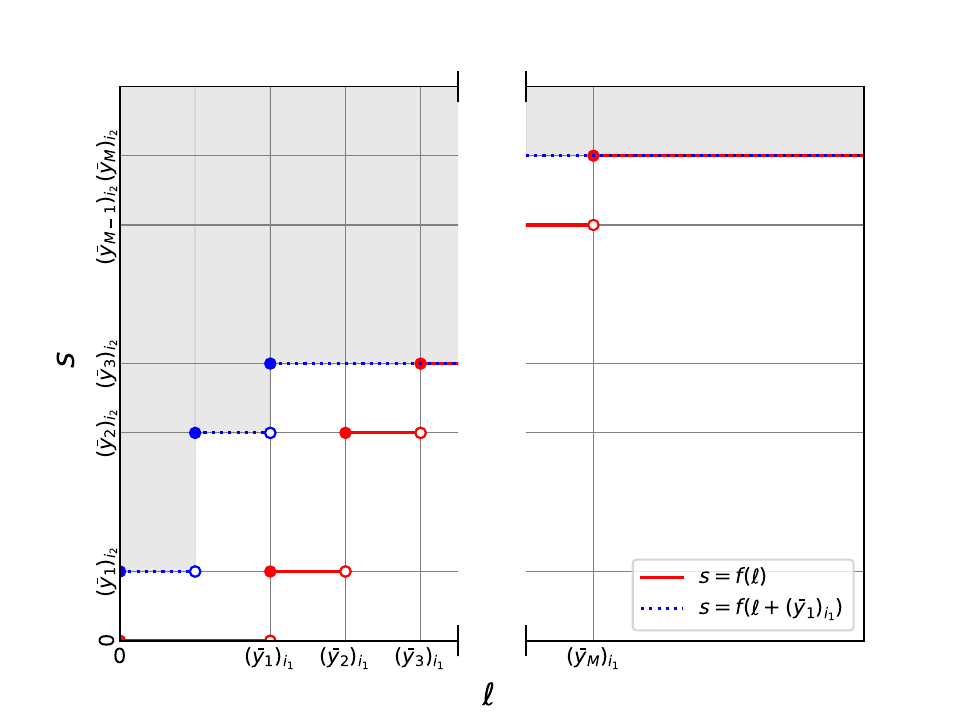}
    \caption{The red sold lines and the blue dotted lines are the graphs of $s=f(\ell)$ and $s=f(\ell+(\bar y)_{i_1})$.  The complexes $y\in \C\setminus\{\bar y_1, \dots, \bar y_M\}$ are in the shaded area.}
    \label{fig:3}
\end{figure}

 \begin{example}\label{ex: structure3}
     The reaction network in \eqref{eq:main reaction network example} satisfies the assumptions of Corollary \ref{cor:structure3} with $y^*_1=\bar y_1=\emptyset, y^*_2=\bar y_2=A+B, i_1=1$ and $i_2=2$.
 \end{example}

Now using $f$ in \eqref{eq:function f} and the concept of a \emph{subnetwork}, we can further extend the classes of reaction networks admitting non-exponential ergodicity.

\begin{cor}\label{cor:structure4}
Let $(\Sp,\C,\Re)$ and $(\bar \Sp, \bar \C, \bar \Re)$ be weakly reversible reaction networks such that $\bar \Sp=\{S_1,S_2\}$, $\Sp=\{S_1,S_2,\dots,S_d\}, \bar \C \subseteq \C$, and $\bar \Re \subseteq \Re$. Suppose that $(\bar \Sp, \bar \C, \bar \Re)$  satisfies the conditions of Corollary \ref{cor:structure3} with $\bar y^*_m$'s, 
$\bar y_m$'s, $i_1, i_2$ and function $f$ defined as in Corollary \ref{cor:structure3}. For each $y\in \C\setminus \bar \C$, we assume that either $(y)_{i_2} > f((y)_{i_1} + (\bar y_1)_{i_1})$ or $(y)_k > 0$ for some $k \in \{3,4, \dots , d\}$.  Let $x=y^*_{m'}$ for some $m'$. If under a choice of the parameters $\kappa$, the associated Markov chain $X$ for $(\Sp,\C,\Re,\kappa)$ with $X(0)=x$ is ergodic on $\mathbb S_x$, then $X$ is non-exponentially ergodic.
\end{cor}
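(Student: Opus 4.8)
\textbf{Proof plan for Corollary \ref{cor:structure4}.}
The strategy is to reduce Corollary \ref{cor:structure4} to Corollary \ref{cor:structure3} (equivalently, to Theorem \ref{thm: stong tier1 and irr is enough}) by exhibiting the very same strong tier-1 cycle $\bar R = (\bar y^*_1\to \bar y^*_2,\dots,\bar y^*_M\to\bar y^*_1)$ coming from the subnetwork $(\bar\Sp,\bar\C,\bar\Re)$, but now viewed as a cycle of reactions inside the larger network $(\Sp,\C,\Re)$, along a suitable tier sequence $\{x_n\}\subseteq\Z^d_{\ge0}$. First I would take the tier sequence $\{x_n\}$ produced for the subnetwork by Corollary \ref{cor:structure3}: it lies in the coordinate plane spanned by $S_1,S_2$ (the last $d-2$ coordinates are $0$), has $(x_n)_{i_1}\to\infty$ and the $i_2$-coordinate staying along the staircase $f$, and the reactions of $\bar\Re$ are mutually comparable along it with $\bar y^*_m\to\bar y^*_{m+1}$ dominant at $x^m_n$. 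I would then embed $\{x_n\}$ into $\Z^d_{\ge0}$ by padding with zeros, and check that it is still a tier sequence for the \emph{whole} network $(\Sp,\C,\Re)$ --- passing to a subsequence if necessary so that all intensities $\lambda_y(x_n)$, $y\in\C_s$, become mutually comparable (condition 3 of Definition \ref{def_tier-seqence}); conditions 1 and 2 are immediate from the construction since the nonzero coordinates are exactly $i_1$ (tending to $\infty$) and $i_2$ (eventually constant on each staircase block).

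The heart of the argument is verifying that $\bar R$ is a strong tier-1 cycle along this embedded $\{x_n\}$ \emph{with respect to $\Re$, not just $\bar\Re$}. The defining limit \eqref{eq:key for strong tier 1} must now be checked for every reaction $y\to y'\in\Re\setminus\{\bar y^*_m\to\bar y^*_{m+1}\}$, which splits into two cases. For $y\in\bar\C$ the limit was already established in Corollary \ref{cor:structure3}, since the extra species do not affect $\lambda_y$ for such $y$ and the points $x^m_n$ are unchanged (the cycle vectors $\bar y^*_{j+1}-\bar y^*_j$ live in the first two coordinates). The new case is $y\in\C\setminus\bar\C$, and here I would invoke the hypothesis: either $(y)_{i_2}>f((y)_{i_1}+(\bar y_1)_{i_1})$ or $(y)_k>0$ for some $k\ge 3$. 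In the first subcase, $y$ sits strictly above the shifted staircase, so at the points $x^m_n$ --- whose $i_2$-coordinate is governed by $f((x_n)_{i_1}+(\bar y_1)_{i_1})$ --- we will have $(x^m_n)_{i_2}<(y)_{i_2}$ for all large $n$, forcing $\lambda_y(x^m_n)=0$, and then \eqref{eq:key for strong tier 1} holds trivially. In the second subcase, since $(x^m_n)_k=0$ for all $k\ge3$ while $(y)_k>0$, again $\lambda_y(x^m_n)=0$. Thus in all new cases the numerator of \eqref{eq:key for strong tier 1} vanishes identically, so the only reactions contributing are those already handled, and the strong tier-1 cycle property transfers verbatim.

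Once $\bar R$ is shown to be a strong tier-1 cycle along the embedded tier sequence $\{x_n\}$ for $(\Sp,\C,\Re)$, I would finish as in Corollary \ref{cor:structure3}: with $x=y^*_{m'}$ some complex on the cycle, weak reversibility of $(\Sp,\C,\Re)$ together with the reachability afforded by the cycle structure (cf.\ Remark \ref{rem:why cycle} and the discussion following Theorem \ref{thm: stong tier1 and irr is enough}) lets us arrange $\{x_n\}\subseteq\mathbb S_x$, possibly after translating the sequence by a bounded amount; then Theorem \ref{thm: stong tier1 and irr is enough} applies and yields non-exponential ergodicity of the ergodic Markov chain $X$. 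The main obstacle I anticipate is precisely the bookkeeping in this last reduction to $\mathbb S_x$: one must confirm that the translation used to push $\{x_n\}$ into the state space does not disturb the staircase alignment in the $i_2$-coordinate nor the vanishing of $\lambda_y(x^m_n)$ for $y\in\C\setminus\bar\C$ --- this is where the ``bounded coordinates'' clause (Proposition \ref{prop:strong tier 1}(4)) and the precise form of $f$ have to be used carefully, and it is the only place where genuinely new estimates beyond those of Corollary \ref{cor:structure3} are needed.
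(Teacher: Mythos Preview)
Your overall strategy matches the paper's: embed the tier sequence from Corollary \ref{cor:structure3} into $\Z^d_{\ge0}$ by padding with zeros, verify that $\bar R$ remains a strong tier-1 cycle for the full network, and then arrange the sequence to lie in $\mathbb S_x$. The paper carries out the first step via Theorem \ref{thm: structural condition} (with $u=e_{i_1}$) and handles the reachability step by applying Lemma \ref{lem:new desired tier sequence} to the \emph{subnetwork} first and then lifting; your direct route through Lemma \ref{lem:new desired tier sequence} for the big network would also work, so this is only a cosmetic difference.

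There is, however, a genuine gap in your treatment of the ``first subcase'' $y\in\C\setminus\bar\C$ with $(y)_{i_2}>f((y)_{i_1}+(\bar y_1)_{i_1})$ and $(y)_k=0$ for $k\ge 3$. Your claim that $\lambda_y(x^m_n)=0$ is false in general. Recall that the tier sequence has $(x_n)_{i_2}\equiv (y^*_1)_{i_2}$ (constant, not ``staying along the staircase''), so $(x^m_n)_{i_2}=(y^*_m)_{i_2}$. The hypothesis $(y)_{i_2}>f((y)_{i_1}+(\bar y_1)_{i_1})$ does \emph{not} force $(y)_{i_2}>(y^*_m)_{i_2}$ for every $m$: if $(y)_{i_1}$ is small, $f((y)_{i_1}+(\bar y_1)_{i_1})$ can equal $(\bar y_1)_{i_2}$ while $(y)_{i_2}=(\bar y_2)_{i_2}\le(\bar y_M)_{i_2}=(y^*_m)_{i_2}$ for $y^*_m=\bar y_M$, and then $\lambda_y(x^m_n)>0$. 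What actually saves you is a degree comparison: when $\lambda_y(x^m_n)>0$ one has $(y)_{i_2}\le(y^*_m)_{i_2}$, and combining this with $(y)_{i_2}>f((y)_{i_1}+(\bar y_1)_{i_1})$ and the definition of $f$ forces $(y)_{i_1}+(\bar y_1)_{i_1}<(y^*_m)_{i_1}$, i.e.\ $\langle e_{i_1},y\rangle<\langle e_{i_1},y^*_m-\bar y_1\rangle$. This is precisely condition 2 of Theorem \ref{thm: structural condition}, and it is the inequality (not vanishing of $\lambda_y$) that makes the ratio in \eqref{eq:key for strong tier 1} tend to zero. The paper invokes Theorem \ref{thm: structural condition} for exactly this reason; your argument should be patched accordingly.
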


\begin{example}\label{ex:extention by subnetworks}
     Consider the reaction network $(\Sp, \C, \Re)$.
\begin{align}
    \emptyset \rightleftharpoons A+B, \quad C \rightleftharpoons B \rightleftharpoons 2B.
\end{align}
By the same argument as in Example \ref{ex:structural1}, the associated Markov chain $X$ is ergodic under any choice of parameters. 

Let $(\bar \Sp,\bar \C, \bar \Re)$ be the reaction network in \eqref{eq:main reaction network example} that is a subnetwork of $(\Sp,\C,\Re)$ and satisfies the assumptions of Corollary \ref{cor:structure3} as shown in Example \ref{ex: structure3}.
If we let $S_1 = A$, $S_2 = B$, $S_3 = C$, $y^*_1 = \emptyset$, and $y^*_2 = A+B$, then $\bar y_1 = y^*_1$, and $\bar y_2 = y^*_2$. In this case, $f(\ell) = 1$ for $\ell \in [1,\infty)$ and $f(\ell)=0$ otherwise. Then for $ y\in \C \setminus \bar \C = \{C\}$, it holds that $(y)_3 > 0$. Therefore, $X$ with $X(0) = \vec{0}$ is non-exponentially ergodic.
\hfill $\triangle$
\end{example}

\begin{rem}\label{rem:re enumerating}
    The results in Corollaries \ref{cor:structure1}--\ref{cor:structure4} are more general than they appear.
    
    First, if $X$ is ergodic for an initial condition $\bar x$, then the condition for the initial state can be replaced by $X(0)=\bar x\in \mathbb{S}_x$ if $x=y^*_{m'}$ for some $m'$ in Corollaries \ref{cor:structure1} -- \ref{cor:structure4}. If $\mathbb S_x=\mathbb Z^d_{\ge 0}$ for any $x$, then the initial state can be arbitrary in Corollaries \ref{cor:structure1} -- \ref{cor:structure4}.

    Furthermore if a given reaction network is weakly reversible (Definition \ref{def:WR}), then there exists a choice of parameters $\kappa$ under which the associated Markov chain is ergodic. As reaction networks satifying the conditions in Corollaries \ref{cor:structure1} and \ref{cor:structure2} are necessarily weakly reversible, ergodicity of $X$ can always hold under a certain choice of parameters.
\end{rem}

There are more general structural criteria for non-exponential ergodicity, but due to its technicality, we provide it later as Theorem \ref{thm: structural condition} in Section \ref{subsec:technical}. In fact, the main ideas of the proofs of Corollary \ref{cor:structure1}--\ref{cor:structure3} are to use Theorem \ref{thm: structural condition}.

\subsection{Extension via coupling}\label{subsec:coupling}
We now extend the classes of non-exponentially ergodic stochastic reaction networks using coupling of two Markov chains.

For instance, consider two ergodic Markov chains $X$ and $\bar X$ associated with two reaction systems $(\Sp,\C,\Re,\kappa)$ and $(\bar \Sp,\bar \C,\bar \Re,\bar \kappa)$, respectively. Suppose that $\bar \Re \subseteq \Re$ (hence $\bar \Sp \subseteq \Sp$ and $\bar \C\subseteq \C$) and $\bar \kappa_{y\to y'}=\kappa_{y\to y'}$ for each $y\to y'\in \bar \Re$. Then, by the random time representation \cite{AndKurtz2011, Kurtz80}, 
\begin{align}
\begin{split}\label{eq:coupling}
    &X(t)=X(0)+\sum_{y\to y' \in \Re} Y_{y\to y'}\left ( \int_0^t \kappa_{y\to y'}\lambda_y(X(s))ds\right ) (y'-y)\\
    & \bar X(t)= \bar X(0)+\sum_{y\to y' \in \bar \Re} Y_{y\to y'}\left ( \int_0^t \kappa_{y\to y'} \lambda_y(\bar X(s))ds\right ) (y'-y)
\end{split}
\end{align}
in distribution, where $Y_{y\to y'}$'s are independent unit Poisson processes.
In \eqref{eq:coupling} we may abuse the notation in $y'-y \in \bar \Re$ as the dimensions of $X$ and $\bar X$ might be different. 
Note that as the unit Poisson processes $Y_{y\to y'}$ for $y\to y'\in \bar \Re$ are shared, $X$ and $\bar X$ are coupled. If each reactions in $\Re \setminus \bar \Re$ cannot change the copy numbers of $\bar \Sp$ and $(X(0))_i=(\bar X(0))_i$ for $S_i\in \bar S_i$, then under the coupling \eqref{eq:coupling}, the returning time to $\bar X(0)$ for $\bar X$ is shorter than or equal to the returning time to $X(0)$ for $X$ almost surely, because additional reactions in $\Re$ cannot create and remove the species $S_i$'s in $\bar \Sp \subseteq \Sp$. Hence if $\bar X$ is non-exponentially ergodic, then so $X$ is by Theorem \ref{thm: kingman}. 

This approach can be applied to general classes of Markov chains on countable state spaces. In the following proposition, we will consider two Markov chains $X$ and $\bar X$ defined on $\mathbb Z^d$ and $\mathbb Z^{\bar d}$, respectively, with $d > \bar d$. We enumerate the index of the coordinates as $\{1,2,\dots,\bar d,\bar d+1,\dots,d\}$.
 For any pairs of states $z,w \in \mathbb S_x$ and $\bar z, \bar w \in \bar{\mathbb S}_{\bar x}$, we denote by $q_{z,w}$ and $\bar q_{\bar z, \bar w}$ the transition rates of $X$ and $\bar X$, respectively. We will denote the state space of $X$ and $\bar X$ by $\mathbb S$ and $\bar{\mathbb S}$, respectively.
\begin{prop}\label{prop:coupling}
     Suppose that for any pair of states $z$ and $w$ in $\mathbb S_x$, the transition rate $q_{z,w}$ satisfies that 
\begin{align}\label{eq:transition of coupling}
        q_{z, w}= \begin{cases}
            \bar q_{\bar z, \bar w} \quad &\text{if $w_i-z_i=0$ for each $i\ge \bar d+1$,}\\
            0 &\text{if $w_i-z_i\neq 0$ for some $i\le \bar d$ and $w_j-z_j\neq 0$ for some $j\ge  \bar d+1$,}
        \end{cases}
    \end{align}
    where states $\bar z$ and $\bar w$ in $\bar{\mathbb S}_{\bar x}$ are such that $(\bar z)_i=z_i$ and $(\bar w)_i=w_i$ for each $i\le \bar d$.    For any initial conditions $X(0)=x$ and $\bar X(0)=\bar x$ such that $(x)_i=(\bar x)_i$ for $i\le \bar d$, if $\bar X$ is non-exponentially ergodic so $X$ is. 
\end{prop}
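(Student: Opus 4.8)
The strategy is to transfer non-exponential ergodicity from $\bar X$ to $X$ by showing that, under the structural hypothesis \eqref{eq:transition of coupling}, the projection of $X$ onto its first $\bar d$ coordinates behaves like $\bar X$ as far as returns to the base state are concerned, and then to invoke Theorem \ref{thm: kingman}. Concretely, I would fix a state $x\in\mathbb S$ and the corresponding $\bar x\in\bar{\mathbb S}$ with $(x)_i=(\bar x)_i$ for $i\le\bar d$, and aim to prove that for every $\rho>0$ one has $\mathbb E_x[e^{\rho\tau_x}]\ge \mathbb E_{\bar x}[e^{\rho\tau_{\bar x}}]$, where $\tau_x$ and $\tau_{\bar x}$ are the first return times of $X$ and $\bar X$. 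Since $\bar X$ is non-exponentially ergodic and ergodic with a unique stationary distribution, Theorem \ref{thm: kingman} (in the direction used implicitly in its proof, i.e. that \eqref{eq:kingman key equality} holds for some state of $\bar X$ — more carefully, I would pick $\bar x$ to be a state witnessing \eqref{eq:kingman key equality}, which exists because exponential ergodicity fails) gives $\mathbb E_{\bar x}[e^{\rho\tau_{\bar x}}]=\infty$ for all $\rho>0$, hence $\mathbb E_x[e^{\rho\tau_x}]=\infty$, and Theorem \ref{thm: kingman} applied to $X$ finishes the argument.

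The core is therefore the comparison of return-time moments, which I would do at the level of paths, mirroring the computation in \eqref{eq:generate function on paths1}--\eqref{eq:generate function on paths2}. Given an active path $\bar\gamma=(\bar x=\bar z(1),\bar z(2),\dots,\bar z(N)=\bar x)$ for $\bar X$, the first clause of \eqref{eq:transition of coupling} lets me lift it to an active path $\gamma=(x=z(1),\dots,z(N)=x)$ for $X$ in which the last $d-\bar d$ coordinates never change: inductively, if $z(m)$ has been defined with $(z(m))_i=(\bar z(m))_i$ for $i\le\bar d$ and $(z(m))_j=(x)_j$ for $j>\bar d$, then because $\bar q_{\bar z(m),\bar z(m+1)}>0$ and the transition $\bar z(m)\to\bar z(m+1)$ of $\bar X$ corresponds (by \eqref{eq:transition of coupling}) to the transition $z(m)\to z(m)+(\bar z(m+1)-\bar z(m),\vec 0)=:z(m+1)$ of $X$ with exactly the same rate, the lifted step is available. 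Along such a lifted path, $q_{z(m)}\ge\bar q_{\bar z(m)}$ need not hold in general; however, what I actually need is that the ratios $q_{z(m),z(m+1)}/(q_{z(m)}-\rho)$ compare well with $\bar q_{\bar z(m),\bar z(m+1)}/(\bar q_{\bar z(m)}-\rho)$. Here the numerators agree, so I need $q_{z(m)}\le\bar q_{\bar z(m)}$ — i.e. the total rate in the lifted chain at a state of the form $(\bar z,\vec 0\text{ in extra coords})$ equals the total rate of $\bar X$ at $\bar z$. This is exactly what the two clauses of \eqref{eq:transition of coupling} force: from such a state, every transition either keeps the extra coordinates fixed (rate $\bar q_{\bar z,\bar w}$, and these sum to $\bar q_{\bar z}$) or changes only extra coordinates (rate possibly nonzero), while mixed transitions are forbidden by the second clause; so $q_{z(m)}=\bar q_{\bar z(m)}+(\text{rate of pure-extra-coordinate transitions})\ge\bar q_{\bar z(m)}$. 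That inequality goes the \emph{wrong} way for a termwise bound of $F(\gamma,\rho)$, so a crude path-by-path comparison fails and a probabilistic coupling argument is cleaner.

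Thus the cleanest route — and the one I would actually write — uses the random time change representation \eqref{eq:coupling}-style coupling, or more elementarily a direct coupling of the jump chains. Run $X$ from $x$ and $\bar X$ from $\bar x$ on a common probability space so that whenever $X$ makes a transition that alters one of its first $\bar d$ coordinates, $\bar X$ makes the corresponding transition using the same exponential clock, and whenever $X$ makes a pure-extra-coordinate transition, $\bar X$ stays put (it has no such transitions, by the second clause of \eqref{eq:transition of coupling}, the extra-coordinate moves of $X$ are invisible to the $\bar d$-dimensional projection). By \eqref{eq:transition of coupling} the projection $\Pi X:=((X)_1,\dots,(X)_{\bar d})$ is itself a Markov chain with the transition rates of $\bar X$, so $\Pi X$ is a version of $\bar X$. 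The return time $\tau_x$ of $X$ to $x$ requires in particular that $\Pi X$ return to $\bar x$ and that all extra coordinates return to their initial values; hence $\tau_x\ge\tau^{\Pi}_{\bar x}$, the first return time of $\Pi X$ to $\bar x$, which has the law of $\tau_{\bar x}$. Therefore $\mathbb E_x[e^{\rho\tau_x}]\ge\mathbb E[e^{\rho\tau^{\Pi}_{\bar x}}]=\mathbb E_{\bar x}[e^{\rho\tau_{\bar x}}]$ for every $\rho>0$.

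\textbf{Main obstacle.} The delicate point is the non-explosivity/well-definedness bookkeeping needed to justify that $\Pi X$ is genuinely a copy of $\bar X$ (not merely equal in rates but possibly explosive or losing mass) and that the inequality $\tau_x\ge\tau^{\Pi}_{\bar x}$ survives the passage to $e^{\rho\cdot}$ moments — in particular handling the event $\{\tau_x=\infty\}$, which contributes $+\infty$ and is harmless, and confirming that on $\{\tau_x<\infty\}$ the projection has indeed returned to $\bar x$. One must also argue that $X$ inherits ergodicity with a unique stationary distribution so that "non-exponentially ergodic" is even defined for $X$; this can be taken as a standing assumption consistent with how the proposition is invoked (the target reaction-network $X$ is assumed ergodic), or derived from irreducibility of $\mathbb S_x$ plus the given structure. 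Modulo these standard measure-theoretic checks, the argument is the short coupling above; I expect the write-up to be a half-page.
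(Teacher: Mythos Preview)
Your proposal is correct and follows essentially the same route as the paper: define the projection $\hat X = \Pi X$ onto the first $\bar d$ coordinates, argue it has the law of $\bar X$, use the almost-sure inequality $\tau_x \ge \tau^{\Pi}_{\bar x}$ between return times, and conclude via Theorem~\ref{thm: kingman}. The only notable difference is in how the key step ``$\Pi X$ has the law of $\bar X$'' is justified: you argue probabilistically that the first $\bar d$ coordinates evolve autonomously (rates for transitions changing those coordinates depend only on them, by the first clause of \eqref{eq:transition of coupling}; mixed transitions are forbidden by the second; pure extra-coordinate transitions leave $\Pi X$ untouched), whereas the paper verifies directly that $\hat p(\bar z,t):=P(\Pi X(t)=\bar z)$ solves the Kolmogorov forward equation of $\bar X$ and invokes uniqueness via non-explosivity of $\bar X$. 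Your probabilistic argument is valid and arguably more transparent; the paper's analytic verification sidesteps any worry about whether the projection of a Markov chain is Markov by working at the level of one-dimensional marginals. The obstacle you flag --- non-explosivity bookkeeping for $\Pi X$ --- is exactly what the paper's Kolmogorov-equation uniqueness handles, and your observation that ergodicity of $X$ must be taken as a standing hypothesis matches how the proposition is actually invoked (e.g.\ in Corollary~\ref{cor:subnetwork}).
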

\begin{rem}
    As described for $X$ and $\bar X$ in \eqref{eq:coupling}, the condition \eqref{eq:transition of coupling} implies that $X$ and $\bar X$ have the same transition rates if the transition only alter both the first $\bar d$ coordinates of $X$, and each transition of $X$ cannot alter the first $\bar d$ coordinates and the remaining coordinates at the same time.
\end{rem}

\begin{proof}
    Define $\hat X \in \Z^{\bar d}_{\ge 0}$ such that $(\hat X(t))_i = (X(t))_i$ for each $i\le \bar d$ and $t \ge 0$. 
     For any states $x_0\in \mathbb S_x$ and $\bar x_0\in \bar{\mathbb S}_{\bar x}$ such that $(x_0)_i=(\bar x_0)_i$ for each $i\le \bar d$, let the returning time for $X$ and $\hat X$ to $x_0$ and $\bar x_0$ be $\tau_{x_0}$ and $\bar \tau_{\bar x_0}$, respectively. By definition of $\hat X$, we have $\tau_{x_0}\ge \bar \tau_{\bar x_0}$ almost surely. Therefore  if we show that $\bar X$ and $\hat X$ are equal in distribution, then non-exponential ergodicity of $\bar X$ implies non-exponential ergodicity of $X$ by Theorem \ref{thm: kingman}.  To do that we show that the distributions of $\bar X(t)$ and $\hat X(t)$ solve the same Kolmogorov forward equation at each time $t>0$.

For the fixed initial state $x$, we define $p(z,t)=P(X(t)=z)$. Similarly, for the fixed initial state $\bar x$, we define $\bar p(\bar z,t)=P(\bar X(t)=\bar z)$ and $\hat p(\bar z,t)=P(\hat X(t)=\bar z)$. 
    Since $\bar X$ is non-explosive, the probability distribution $\bar p(\bar z,t)$ is the unique solution of the following Kolmogorov forward equation \cite[Chapter 2.8]{NorrisMC97}: for each $\bar z\in \bar{\mathbb S}_{\bar x}$
    \begin{align}\label{eq:cme of bar x}
        \frac{d}{dt }\bar p(\bar z, t)= \dsum_{\bar z' \in \bar{\mathbb S}_{\bar x}\setminus \{\bar z\}} \bar p(\bar z',t)\bar q_{\bar z',\bar z}- \sum_{\bar z' \in \bar{\mathbb S}_{\bar x}\setminus \{\bar z\}} \bar p(\bar z,t)\bar q_{\bar z, \bar z'}.
    \end{align}
    Hence, it suffices to show that $\hat p(\bar z,t)$ is a solution to \eqref{eq:cme of bar x} for each $\bar z\in \bar{\mathbb S}_{\bar x}$.

    We first fix $\bar z \in \bar{\mathbb S}_{\bar x}$. Let $U(\bar z):=\{ z \in \mathbb S: (z)_i=(\bar z)_i \text{ for each $i\le \bar d$}\}$. Then as $\hat p(\bar z,t)  = \dsum_{z\in U(\bar z)}p(z,t)$.
    we have that for each $t > 0$,  
\begin{align}\label{eq:interchange}
     \frac{d}{dt }\hat p(\bar z,t) = \dsum_{z\in U(\bar z)}\dfrac{d}{dt}p(z,t),
    \end{align}
    where the interchangeability of the summation and the differentiation holds due to the uniform convergence of $\dsum_{z\in U(\bar z)}\dfrac{d}{dt}p(x,t)$ \cite[Section $\uppercase\expandafter{\romannumeral2\relax}.3$]{chung1967markov}.
    By summing up the Kolmogorov forward equation of $X$ over $z\in U(\bar z)$, we have that by \eqref{eq:interchange} 
    \begin{align}
        \frac{d}{dt }\hat p(\bar z,t)  &= \dsum_{z\in U(\bar z)} \left ( \dsum_{z' \in \mathbb S_{x}\setminus \{ z\}}  p(z',t) q_{ z', z}- \sum_{ z' \in \mathbb S_{x}\setminus \{ z\} } p( z,t) q_{z,z'}\right )       
        \notag \\ 
        &=  \dsum_{z' \in \mathbb S_{x}\setminus \{ z\} }   \dsum_{z\in U(\bar z)} p(z',t) q_{ z', z}- \dsum_{z\in U(\bar z)}  \dsum_{z' \in \mathbb S_{x}\setminus \{ z\}} p( z,t) q_{z,z'} 
        \notag \\
        &= \dsum_{z' \not \in U(\bar z) }  \dsum_{z\in U(\bar z)} p(z',t) q_{ z', z} + \dsum_{\substack{z',z \in U(\bar z) \\ z' \neq z}}  p(z',t) q_{ z', z} - \dsum_{z\in U(\bar z)}  \dsum_{z' \in \mathbb S_{x}\setminus \{ z\}} p( z,t) q_{z,z'}\label{cme}
    \end{align} 
For $z' \not \in U(\bar z)$ and $z \in U(\bar z)$, by the definition of the transition rates, $q_{z',z} > 0$ if and only if $(z')_i = (z)_i$ for each $i \ge \bar d+1$. In this case, $q_{z',z} = \bar q_{\bar z', \bar z}$, where $\bar z'\in \bar{\mathbb S}_{\bar x}$ such that $(z')_i=(\bar z')_i$ for $i\le \bar d$.
    Hence, from the first summation of \eqref{cme}, we have 
    \begin{align}\label{1st}
        \dsum_{z' \not \in U(\bar z) }   \dsum_{z\in U(\bar z)} p(z',t) q_{ z', z} = \dsum_{z' \not \in U(\bar z) }    p(z',t) \bar q_{\bar z', \bar z}=\dsum_{\bar z' \in \bar{\mathbb S}_{\bar x}\setminus \{\bar z\} }    \hat p(\bar z',t) \bar q_{\bar z', \bar z}
    \end{align}
    For the second summation of \eqref{cme}, by simply switching the indices, we have
    \begin{align}\label{2nd}
          \dsum_{\substack{z',z \in U(\bar z) \\ z' \neq z}}  p(z',t) q_{ z', z} = \dsum_{\substack{z,z' \in U(\bar z) \\ z \neq z'}}  p(z,t) q_{ z, z'} = \dsum_{z \in U(\bar z)} \dsum_{z' \in U(\bar z) \setminus \{z\}} p(z,t) q_{ z, z'}
    \end{align}
    By plugging \eqref{1st} and \eqref{2nd} into \eqref{cme}, we have 
    \begin{align*}
        \frac{d}{dt }\hat p(\bar z,t) &= \dsum_{\bar z' \in \bar{\mathbb S}_{\bar x}\setminus \{\bar z\} }    \hat p(\bar z',t) \bar q_{\bar z', \bar z} -\dsum_{z \in U(\bar z)} p(z,t) \left(  \dsum_{z' \in \mathbb S_{x}\setminus \{ z\}} q_{z,z'} - \dsum_{z' \in U(\bar z) \setminus \{z\}} q_{ z, z'} \right) 
        \\ 
        &=\dsum_{\bar z' \in \bar{\mathbb S}_{\bar x}\setminus \{\bar z\} }    \hat p(\bar z',t) \bar q_{\bar z', \bar z} -\dsum_{z \in U(\bar z)} p(z,t)   \dsum_{z' \not \in U(\bar z)} q_{z,z'} 
        \\
        &= \dsum_{\bar z' \in \bar{\mathbb S}_{\bar x}\setminus \{\bar z\} } \hat p(\bar z',t) \bar q_{\bar z', \bar z} - \dsum_{z \in U(\bar z)} p(z,t)\dsum_{\bar z' \in  \bar{\mathbb S}_{\bar x} \setminus \{ \bar z\}} 
         \bar {q}_{\bar z, \bar z'} \\ &= \dsum_{\bar z' \in \bar{\mathbb S}_{\bar x}\setminus \{\bar z\} } \hat p(\bar z',t) \bar q_{\bar z', \bar z} -  \hat p(\bar z,t)\dsum_{\bar z' \in  \bar{\mathbb S}_{\bar x} \setminus \{ \bar z\}} 
         \bar {q}_{\bar z, \bar z'},
    \end{align*}
    where the third equality holds because for $z'\neq z$, \eqref{eq:transition of coupling} implies that $q_{z,z'}=q_{\bar z, \bar z'}$ only if $(\bar z')_i = (\bar z')_i$ for $i\ge \bar d+1$  and $q_{\bar z, \bar z'}=0$ otherwise.
\end{proof}

This coupling approach further extends the classes of reaction networks that admit non-exponential ergodicity. 
To do this, we use a subnetworks whose species `catalyze' the other reactions.  
\begin{cor}\label{cor:subnetwork}
     Let $(\Sp,\C,\Re)$ and $(\bar \Sp, \bar \C, \bar \Re)$ be reaction networks such that  $\bar \Sp \subseteq \Sp, \bar \C \subseteq \C$, and $\bar \Re \subseteq \Re$. Enumerating $\bar \Sp=\{S_1,\dots,S_{\bar d}\}$ and $\Sp=\{S_1,\dots,S_{\bar d},S_{\bar d+1},\dots,S_d\}$, we suppose that
     \begin{enumerate}
         \item each species $S_i \in \bar \Sp$ satisfies that $y_i=y'_i$ for any $y\to y'\in \Re\setminus \bar \Re$ (i.e. species in $\bar \Sp$ only catalyze the reactions outside $\bar \Re$).
         \item under a choice of the parameters $\bar \kappa$ the associated Markov chain $\bar X$ for $(\bar \Sp, \bar \C, \bar \Re, \bar \kappa)$ with $\bar X(0)=\bar x$ is ergodic on $\bar{\mathbb S}_{\bar x}$.
     \end{enumerate}
Let $x$ be such that $(x)_i=(\bar x)_i$ for each $i\le \bar d$. 
If under any choice of the parameters $\kappa$ such that $\kappa_{y\to y'}=\bar \kappa_{y\to y'}$ for each $y\to y'\in \bar \Re$, the associated Markov chain $X$ for $(\Sp,\C,\Re,\kappa)$ 
 with $X(0)=x$ is ergodic on the state space $\mathbb S_x$, then $X$ is non-exponentially ergodic.  
\end{cor}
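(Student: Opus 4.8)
The plan is to derive the statement directly from Proposition~\ref{prop:coupling}, by checking that the transition rates of $X$ and $\bar X$ are linked through the structural identity \eqref{eq:transition of coupling}. View $\bar X$ as a Markov chain on $\mathbb Z^{\bar d}_{\ge 0}$ and $X$ as a Markov chain on $\mathbb Z^{d}_{\ge 0}$, with coordinates enumerated as $\{1,\dots,\bar d,\bar d+1,\dots,d\}$. First I would clear the trivial case $d=\bar d$: then $\bar\Sp=\Sp$, so condition~1 forces every reaction of $\Re\setminus\bar\Re$ to be of the form $y\to y$, which is excluded, hence $\Re=\bar\Re$ and $X$, $\bar X$ have the same law; so one may assume $d>\bar d$, which is the regime of Proposition~\ref{prop:coupling}. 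Granting this, once \eqref{eq:transition of coupling} is verified, the matching $(x)_i=(\bar x)_i$ for $i\le\bar d$ is given by hypothesis and $\bar X$ is non-exponentially ergodic, so Proposition~\ref{prop:coupling} delivers the non-exponential ergodicity of $X$.

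To verify \eqref{eq:transition of coupling}, fix distinct $z,w\in\mathbb S_x$ and let $\bar z,\bar w$ be their projections onto the first $\bar d$ coordinates; these lie in $\bar{\mathbb S}_{\bar x}$ because every transition of $X$ projects either to a $\bar\Re$-transition or to the identity (using condition~1 and $\lambda_y(z)=\lambda_y(\bar z)$ for complexes supported on $\bar\Sp$). Split $\Re$ into $\bar\Re$ and $\Re\setminus\bar\Re$. For $y\to y'\in\bar\Re$ the complexes $y,y'$ are supported on $\bar\Sp$, so the reaction vector $y'-y$ vanishes in coordinates $\ge\bar d+1$, while $\lambda_y(z)=\lambda_y(\bar z)$ by the mass-action formula \eqref{mass} and $\kappa_{y\to y'}=\bar\kappa_{y\to y'}$ by hypothesis. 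For $y\to y'\in\Re\setminus\bar\Re$, condition~1 says $y'-y$ vanishes in coordinates $\le\bar d$. Consequently, if $w_i=z_i$ for every $i\ge\bar d+1$, then no reaction of $\Re\setminus\bar\Re$ can realize the net change $w-z$ (it would have to vanish on the first $\bar d$ coordinates, yet $w\neq z$ forces $\bar w\neq\bar z$), and summing \eqref{eq:q} over the reactions of $\bar\Re$ with $y'-y=w-z$ gives $\bar q_{\bar z,\bar w}$ after substituting $\lambda_y(z)=\lambda_y(\bar z)$ and $\kappa=\bar\kappa$; whereas if $w_i\neq z_i$ for some $i\le\bar d$ \emph{and} $w_j\neq z_j$ for some $j\ge\bar d+1$, then neither a $\bar\Re$-reaction (no change above $\bar d$) nor a reaction of $\Re\setminus\bar\Re$ (no change at or below $\bar d$) can realize $w-z$, so $q_{z,w}=0$. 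These are exactly the two clauses of \eqref{eq:transition of coupling}.

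I do not expect a genuine obstacle: the argument is a bookkeeping reduction to Proposition~\ref{prop:coupling}. The only step that requires care is the reaction-by-reaction check that \emph{no} reaction of $\Re$ changes both a coordinate in $\{1,\dots,\bar d\}$ and a coordinate in $\{\bar d+1,\dots,d\}$ — which is precisely where the catalysis hypothesis (condition~1) and the support of $\bar\C$-complexes on $\bar\Sp$ are used — together with the identity $\lambda_y(z)=\lambda_y(\bar z)$ for $\bar\Re$-reactions and the trivial reconciliation of the $d=\bar d$ case.
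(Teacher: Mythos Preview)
Your proposal is correct and follows the same route as the paper: verify that the transition rates of $X$ and $\bar X$ satisfy the structural identity \eqref{eq:transition of coupling}, then invoke Proposition~\ref{prop:coupling}. Your argument is in fact more careful than the paper's three-line proof---you explicitly split the reactions into $\bar\Re$ and $\Re\setminus\bar\Re$, check each clause of \eqref{eq:transition of coupling} separately, and dispose of the degenerate case $d=\bar d$---but the underlying idea is identical. One small point: the corollary as stated only assumes $\bar X$ is \emph{ergodic}, not non-exponentially ergodic; both you and the paper's proof tacitly read the intended hypothesis (non-exponential ergodicity of $\bar X$), without which Proposition~\ref{prop:coupling} gives nothing.
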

\begin{proof}
For each $y\to y'\in \bar \Re \subseteq \Re$, $X$ and $\bar X$ share the same reaction intensity. The reactions in $ \Re \setminus \bar \Re$ do not alter the species in $\bar \Sp$. Therefore, the transitions rates of $X$ and $\bar X$ satisfy \eqref{eq:transition of coupling}. Hence non-exponential ergodicity of $X$ holds by Proposition \ref{prop:coupling}.
\end{proof}
Corollary \ref{cor:subnetwork} combined with Corollaries \ref{cor:structure1}--\ref{cor:structure4} can provide broader classes of reaction networks admitting non-exponential ergodicity. More interestingly, using this extension, we can create many detailed balanced Markov models that admit non-exponential ergodicity. We will discuss this in Section \ref{sec:detail complex}.
\begin{example}\label{ex:detailed balanced 2}
Let $(\Sp,\C,\Re,\kappa)$ be a reaction system described with the reaction network below.
    \begin{align}
    \emptyset \xrightleftharpoons[\kappa_2]{\kappa_1} A+B, \quad B \xrightleftharpoons[\kappa_4]{\kappa_3} 2B, \quad A\xrightleftharpoons[\kappa_6]{\kappa_5} C+A.
\end{align}
Let $(\bar \Sp, \bar \C, \bar \Re, \bar \kappa)$ be a reaction system give by the subnetwork formed with the first two connected components. As shown in Example \ref{ex:the main example 1}, for any initial state, the associated Markov chain for $(\bar \Sp, \bar \C, \bar \Re)$ is non-exponential ergodic for any choice of parameters. Also species $A$ catalyzes the reactions in $\{A\to C+A, C+A\to A\}=\Re\setminus \bar \Re$ meaning that those reactions do not alter neither $A$ nor $B$.   Therefore for any $x\in \mathbb Z^3_{\ge 0}$, the associated Markov chain $X$ with $X(0)=x$ is non-exponentially ergodic by Corollary \ref{cor:subnetwork}.  \hfill $\triangle$
\end{example}


\subsection{Non-exponentially ergodic biochemical systems with detailed balance and complex balance}\label{sec:detail complex}

In the previous section, we provide several structural conditions of reaction networks that ensure non-exponential ergodicity of the associated Markov models when it is ergodic. In this section, we highlight that using those structural conditions, we can construct non-exponentially ergodic, detailed balanced Markov chains. Since there are two concepts of detailed balancing: detailed balanced Markov chains and detailed balanced reaction systems, we define them precisely. 
\begin{defn}
    A reaction system $(\Sp,\C,\Re,\kappa)$ under a choice of parameters $\kappa$ is \emph{detailed balanced} if  $(\Sp,\C,\Re)$ is reversible (i.e. if $y\to y'\in \Re$ then $y'\to y\in \Re)$ and there exists $c\in \mathbb R^d_{> 0}$ such that $\kappa_{y\to y'}\prod_{i=1}^d c_i^{y_i}=\kappa_{y'\to y}\prod_{i=1}^d c_i^{y'_i}$ for each $y\rightleftharpoons y' \in \Re$.  
\end{defn}
The quantity $\kappa_{y\to y'}\prod_{i=1}^d c_i^{y_i}$ is the intensity of a reaction $y \xrightarrow{\kappa_{y\to y'}} y'$ in the  deterministically modeled reaction system under mass-action kinetics \cite{feinberg2019foundations}.

The associated Markov chain for a detailed balanced reaction system $(\Sp,\C,\Re,\kappa)$ is detailed balanced (or \emph{reversible}) \cite[Theorem 4.5]{AndProdForm}, meaning that there exists a stationary distribution $\pi$ such that $q_{z,w}\pi(z)=q_{w,z}\pi(w)$ for each pair of states $z$ and $w$. We can easily check that the reaction systems in Examples \ref{ex:strong tier 1}, \ref{ex:extention by subnetworks}, and \ref{ex:detailed balanced 2} under an arbitrary parameter set $\kappa$ are detailed balanced. Especially, as the reaction networks in Examples \ref{ex:extention by subnetworks} and \ref{ex:detailed balanced 2} constructed,  using Corollary \ref{cor:subnetwork} we can add some reversible reactions to the reaction network \eqref{eq:main reaction network example}, $\emptyset \rightleftharpoons A+B, B\rightleftharpoons 2B$, in order to construct other non-exponentially ergodic, detailed balanced Markov chains. For example, consider the following three reaction networks.
\begin{align}
    & \emptyset \rightleftharpoons A+B, \quad B \rightleftharpoons2B, \quad A\rightleftharpoons C+A  \rightleftharpoons D+A. \label{eq:detail 1}\\
     &C \rightleftharpoons \emptyset \rightleftharpoons A+B, \quad B \rightleftharpoons 2B, \quad 
A\rightleftharpoons D+A.\\
&\emptyset \rightleftharpoons A+B, \quad C+B \rightleftharpoons  B \rightleftharpoons 2B \label{eq:detail 3}
\end{align}
Then, by Corollary \ref{cor:subnetwork} the Markov chains associated with the reaction networks in \eqref{eq:detail 1}--\eqref{eq:detail 3} 
under any choices of parameter set $\kappa$ are non-exponentially ergodic and detailed balanced. 
Importantly, all the above examples are bimolecular systems (each complex consists of two or fewer species), which may exist in real life or can be synthesized.

There is another special balance condition for reaction systems: complex balance \cite{Feinberg72, feinberg2019foundations, Horn72, HornJack72}.
\begin{defn}
    A reaction system $(\Sp,\C,\Re,\kappa)$ under a choice of parameters $\kappa$ is \emph{complex balanced} if  $(\Sp,\C,\Re)$ is weakly reversible and there exists $c\in \mathbb R^d_{> 0}$ such that \begin{align*}
        \dsum_{y\to y' \in \Re_y}\kappa_{y\to y'}\prod_{i=1}^d c_i^{y_i}=\dsum_{y'\to y'' \in \Re: y''=y} \kappa_{ y'\to y''}\prod_{i=1}^d c_i^{y'_i}\quad \text{for each $y\in \C$.}
    \end{align*} 
\end{defn}
In other words, complex balanced reaction systems has a positive steady state $c$ at which the in-coming and out-going reactions from and to $y$ for each complex $y$ are balanced. Interestingly, for any complex balanced reaction systems, the associated Markov chain is ergodic and the stationary distributions is a product form of Poisson distributions  \cite{anderson2018non, AndProdForm}. Detailed balancing is a special case of complex balancing.

Note that weakly reversible reaction networks can be easily formed with reactions forming cycles. Also, a well-known structural condition, so-called \emph{zero deficiency}, yields complex balancing for any weakly reversible reaction networks under arbitrary choices of parameters. Using these two observations, we can construct many non-exponentially ergodic and complex balanced Markov chains using strong tier-1 cycles. Before giving relevant examples, we give more precise description about the zero deficiency. Deficiency $\delta$ of a reaction network $(\Sp,\C,\Re)$ is an index solely determined by its structure as it is defined by $\delta=n-\ell-s$, where $n$ and $\ell$ are the numbers of complexes and the connected components of $(\Sp,\C,\Re)$, respectively, and $s$ is the dimension of a vector space $span\{y'-y: y\to y'\in \Re\} \subseteq \mathbb R^d$. Zero deficiency ensures that the reaction system is complex balanced if the underlying reaction network is weakly reversible \cite{Feinberg87, Feinberg95a, FeinbergLec79}.

The reaction networks in Examples \ref{ex:structural1} and \ref{ex: structure 2} are weakly reversible and have zero deficiency. Using Corollaries \ref{cor:structure1} -- \ref{cor:structure4} combined with Corollary \ref{cor:subnetwork}, we can create many more complex balanced reaction systems for which the associated Markov chains are non-exponentially ergodic. For example, consider the following reaction network. 
\begin{align}\label{eq:complex non expo}
\begin{split}
    &\emptyset \ \ \longrightarrow \ \ 2A+B \\[-0.5ex]
    &  \quad \displaystyle \nwarrow \hspace{1cm}  \swarrow  \quad \qquad A +C \rightleftharpoons A+D\\[-0.5ex]
& \hspace{0.5cm} 3A+2B \hspace{1.2cm} 
\end{split}
\end{align}  
The subnetwork given by the first component in \eqref{eq:complex non expo}
is complex balanced as it is weakly reversible and has zero deficiency as $n=5, \ell=2$, and
\begin{align*}
    s=\text{dim}\{\text{span}\{(2,1,0,0),(1,1,0,0),(-3,-2,0,0),(0,0,-1,1),(0,0,1,-1)\}\}=3.
\end{align*}
Also the subnetwork satisfies the conditions of Corollary \ref{cor:structure2}. The reactions in the second component of \eqref{eq:complex non expo} cannot change the number of copies of $A$ and $B$. Finally, the entire reaction network in \eqref{eq:complex non expo} is weakly reversible and has zero deficiency. Hence the associated Markov chain is non-exponentially ergodic by Corollary \ref{cor:subnetwork}.

Without relying on Corollary \ref{cor:structure1}--\ref{cor:structure4}, we can also construct a complex balanced reaction network that admits non-exponential ergodicity directly using Theorem \ref{thm: stong tier1 and irr is enough}.
\begin{example}
Consider the weakly reversible reaction network $(\Sp, \C, \Re)$ below.
    \begin{align}
\begin{split}
    &\emptyset \ \ \longrightarrow \ \ A+B \\[-0.5ex]
    &  \quad \displaystyle \nwarrow \hspace{0.7cm}  \swarrow  \qquad B \rightleftharpoons 2B \qquad D \rightleftharpoons 2D \\[-0.5ex]
& \hspace{0.7cm} C+D \hspace{1.2cm} 
\end{split}
\end{align} 
The deficiency of this reaction network is zero. Hence, by \cite[Theorem 4.2]{AndProdForm}, for any choice of parameters $\kappa$, $(\Sp, \C, \Re, \kappa)$ is complex balanced. 
We can also check that $R = (\emptyset \to A+B, A+B \to C+D, C+D \to \emptyset)$ is a strong tier-1 cycle along $\{(n,0,n^2,0)\}$. Furthermore, $\Z^4_{\ge 0}$ is irreducible, and hence for any choice of parameters $\kappa$, the associated Markov chain $X$ is non-exponentially ergodic by Theorem \ref{thm: stong tier1 and irr is enough}. 

 \hfill $\triangle$
\end{example}

\section{Proofs of the results in Sections \ref{sec:strong} and \ref{subsec:structural}}\label{sec:proofs}

In this section, we present the proofs of the theorems, the corollaries, and the proposition shown in Sections \ref{sec:strong} and \ref{subsec:structural}. The proofs of Corollaries \ref{cor:structure1}--\ref{cor:structure4} rely on Theorem \ref{thm: structural condition} that provide structural conditions for the existence of a strong tier-1 cycle.  This theorem is introduced In Section \ref{subsec:technical}.

\subsection{Proof of Proposition \ref{prop:strong tier 1}}\label{subsec:proof1}
To show result 1, we first note that the condition $\lambda_{y^*_1}(x_n)>0$ of a strong tier-1 cycle implies that $x_n\ge y^*_1$. Hence $x^2_n=x_n+y^*_2-y^*_1 \ge y^*_2$. Inductively, we can show that $x^m_n\ge y^*_m$ for each $m$. Hence $\lambda_{y^*_m}(x^m_n)>0$ for all $n$. Then for a fixed $m \in \{1,2,\dots,M\}$, let $y \in C_s \setminus \{y^*_m\}$. Then we observe that 
	\begin{align}\label{eq:strong-tier feature 1}
		\lim_{n \to \infty} \frac{\lambda_y(x_n^m)}{\lambda_{y^*_m}(x_n^m)} = \lim_{n \to \infty} \frac{\lambda_y(x_n^m)\lambda_{y^*_{m_0}}(x_n^{m_0})}{\lambda_{y^*_m}(x_n^m)\lambda_{y^*_{m_0}}(x_n^{m_0})}=  \lim_{n \to \infty} \frac{\lambda_y(x_n^m)\lambda_{y^*_{m_0}}(x_n^{m_0})}{\lambda_{y^*_m}(x_n^m)}\lim_{n \to \infty} \frac{1}{\lambda_{y^*_{m_0}}(x_n^{m_0})}=0,
	\end{align}
because $x_n^{m_0} \ge y^*_{m_0}$ so that $\lambda_{y^*_{m_0}}(x_n^{m_0}) > 0$ for all $n$. For the last equality, we used \eqref{eq:key for strong tier 1}. Hence, result 1 follows.

Next, if there exists $y^*_m \to y' \in \Re_{y^*_m}\setminus \{y^*_m\to y^*_{m+1}\}$,  then \eqref{eq:strong-tier feature 1} yields that
\begin{align*}
	1 = \lim_{n \to \infty} \frac{\lambda_{y^*_m}(x_n^m)}{\lambda_{y^*_m}(x_n^m)} = \lim_{n \to \infty} \frac{\lambda_{y^*_m}(x_n^m) \lambda_{y^*_{m_0}}(x_n^{m_0})}{\lambda_{y^*_m}(x_n^m)} \lim_{n \to \infty} \frac{1}{\lambda_{y^*_{m_0}}(x_n^{m_0})}=0,
\end{align*}
leading to a contradiction.

Now we turn to show result 3. Let $y \in \C_s \setminus \{y^*_{m_0}\}$. Substituting of $m$ by $m_0$ in \eqref{eq:key for strong tier 1}, we get 
\begin{align*}
	\lim_{n \to \infty} \frac{\lambda_y(x_n^{m_0})\lambda_{y^*_{m_0}}(x_n^{m_0})}{\lambda_{y^*_{m_0}}(x_n^{m_0})} = \lim_{n \to \infty}\lambda_y(x_n^{m_0}) = 0.
\end{align*} 
We choose a subsequence of $x_{n}$ each of whose coordinate is either strictly increasing or a constant (we denote the subsequence by $\{x_n\}$ for simplicity). Then the only possible option is $\lambda_{y}(x_n^{m_0})\equiv 0$ by the definition of mass-action \eqref{mass}.  

Now we show the last result. Suppose that $\{i: \displaystyle\sup_{n}(x_n)_i< \infty \} = \emptyset$, that is, for each $i$, $\dlim_{n\to \infty} (x_n)_i = \infty$ by considering a subseqence of $\{x_n\}$ if needed. Then obviously $\dlim_{n\to \infty} (x^{m_0}_n)_i = \infty$ for each $i$. Therefore for any $y\in \C_s$, we have that $\lambda_{y}(x^{m_0}_{n})> 0$ for large enough $n$ by the definition of mass-action \eqref{mass}, which is contradictory to result 3. \hfill $\square$

\subsection{Proof of Theorem \ref{thm: stong tier1 and irr is enough}}\label{subsec:proof2}
We will show that the conditions in Theorem \ref{thm:maybe main} hold.
We first note that for a continuous-time Markov associated with a reaction system $(\Sp,\C,\Re,\kappa)$, the sum of all transition rates, $q_z$, at a state $z$ is $\dsum_{y\to y'\in \Re} \lambda_{y\to y'}(z)$. Hence under mass-action kinetics \eqref{mass},
\begin{align}\label{eq:lower bound of the intensities}
    \inf_{z}q_z=\inf_{z\in \mathbb S}\dsum_{y\to y'\in \Re} \lambda_{y\to y'}(z)\ge \min_{y\to y'\in \Re} k_{y\to y'}>0.
\end{align}
provided that no absorbing state is in the state space. 
Note that  if \eqref{eq:kingman key equality} holds for some $\rho>0$, then so it does for any $\rho'>\rho$. Hence for the sake of the simplicity, we  replace the condition $\rho< \displaystyle \inf_z q_z$ in Theorem \ref{thm:maybe main} by $\rho < \displaystyle \min_{y\to y'\in \Re} k_{y\to y'}$. Let $\rho \in (0,\displaystyle \min_{y\to y'\in \Re} k_{y\to y'})$ be fixed.

     We consider a series of paths $\gamma_n = (x^1_n, x^2_n, \cdots, x^M_n, x^1_n)$ with $n=1,2,\dots$. Note that by result 1 in Proposition  \ref{prop:strong tier 1}, we have  $\lambda_{y^*_m\to y^*_{m+1}}(x^m_n)>0$ for all $n$ and $m$, which means that for fixed $n$, the transition from $x^m_n$ to $x^{m+1}_n$ is available for any $m$. This yields that $\gamma_n$ is an active path so that $x^m_n$ is reachable from $x_n$ for any $m$ and $n$. This also means that $\{x^m_n\} \subseteq \mathbb S$ for any $m$ as $\{x_n\}\subseteq \mathbb S$. Hence by Theorem \ref{thm:maybe main}  
     it suffices to show that  there exists $N_\rho$ such that $F(\gamma_{N_\rho},\rho) >1$.

Note that we have $\lambda_y(x^{m_0}_n)=0$ for any $y\in \C_s\setminus\{y^*_{m_0}\}$ by result 3 of Proposition \ref{prop:strong tier 1}. Hence $m_0\in A:=\{m: \lambda_{y}(x^m_n)\equiv 0 \text{ for any $y\in \C_s\setminus\{y^*_m\}$}\}$. Also for each $m\in A$ we have that 
\begin{align*}
    \frac{\lambda_{y^*_m\to y^*_{m+1}}(x^m_n)}{  \displaystyle \sum_{y\to y' \in \Re} \lambda_{y\to y'} (x^m_n)  - \rho}=\frac{\lambda_{y^*_m\to y^*_{m+1}}(x^m_n)}{  \lambda_{y^*_m\to y^*_{m+1}}(x^m_n)  - \rho}=1+ \frac{\rho}{\lambda_{y^*_m\to y^*_{m+1}}(x^m_n)-\rho}.
\end{align*}
  Then by \eqref{eq:generate function on paths2} we have that
    \begin{align}
       F(\gamma_n,\rho) &= \prod_{m=1}^M \frac{\lambda_{y^*_m\to y^*_{m+1}}(x^m_n)}{  \displaystyle \sum_{y\to y' \in \Re} \lambda_{y\to y'} (x^m_n)  - \rho} \notag \\
       &= \prod_{m \in A} \left ( 1+ \frac{\rho}{\lambda_{y^*_m\to y^*_{m+1}}(x^m_n)-\rho} \right)
 \prod_{m \in A^c} \left ( 1 - \frac{  \dsum_{y\to y' \in \Re \setminus \{ y^*_m \to y^*_{m+1}\}}\lambda_{y\to y'} (x^m_n)-\rho}{ \dsum_{y\to y' \in \Re} \lambda_{y\to y'} (x^m_n)  - \rho} \right) \notag \\ 
&\ge \left ( 1+ \frac{\rho}{\lambda_{y^*_{m_0}\to y^*_{m_0+1}}(x^{m_0}_n)-\rho} \right) \prod_{m \in A^c} \underbrace{\left ( 1 - \frac{  \dsum_{y\to y' \in \Re \setminus \{ y^*_m \to y^*_{m+1}\}}\lambda_{y\to y'} (x^m_n)-\rho}{ \dsum_{y\to y' \in \Re} \lambda_{y\to y'} (x^m_n)  - \rho} \right)}_{g(m,x_n)} \label{eq:F on general conditions}.
    \end{align}   

Fix $\varepsilon>0$ to be chosen.
We now show that there exists $C>0$ such that for any $m\in A^c$, $g(m,x_n)\ge 1-C\dfrac{\varepsilon}{\lambda_{y^*_{m_0}}(x^{m_0}_n)}$ for large enough $n$. 
First of all, note that for any $y\to y'\in \Re$ and for any state $z$, $\lambda_{y\to y'}(z)\ge \displaystyle\min_{y\to y'}\kappa_{y\to y'}$ if $\lambda_{y\to y'}(z)>0$. Thus for each $m\in A^c$, we have $\displaystyle\sum_{y\to y'\in \Re}\lambda_{y\to y'}(x^m_n)-\rho \ge \lambda_{y^*_m\to y^*_{m+1}}(x^m_n)$.
Furthermore, for each $m\in A^c$, that we choose $\bar y_m \in \C_s \setminus\{y^*_m\}$ so that there exists $C_m>0$ such that $\dsum_{y\to y' \in \Re \setminus \{ y^*_m \to y^*_{m+1}\}}\lambda_{y\to y'}(x^m_n) \le C_m\lambda_{\bar y_m}(x^m_n)$. This is possible due to property 3
of tier sequences in Definition \ref{def_tier-seqence}.   Let $C=\displaystyle\max_m \dfrac{C_m}{ \kappa_{y^*_m\to y^*_{m+1}}}$ that is independent of a choice of $\varepsilon$. Then 
\begin{align}
    g(m,x_n)&\ge  1 - \frac{  C_m\lambda_{\bar y_m} (x^m_n)}{  \lambda_{y^*_m \to y^*_{m+1}}(x^m_n) } \ge 1-C\frac{\lambda_{\bar y_m} (x^m_n)}{\lambda_{y^*_m}(x^m_n)}\notag \\
    & =  1 - C\frac{  \lambda_{y^*_{m_0}}(x^{m_0}_n) \lambda_{\bar y_m} (x^m_n)}{ \lambda_{y^*_m}(x^m_n) } \frac{1}{\lambda_{y^*_{m_0}}(x^{m_0}_n)} \ge  1 - C\frac{\varepsilon}{\lambda_{y^*_{m_0}}(x^{m_0}_n)} \quad \text{for large enough $n$,} \label{eq:lower bound of g}
\end{align}
where the last inequality follows by \eqref{eq:key for strong tier 1}.

Finally, note that for any $k\ge 1$,  $(1-x)^k\ge (1-2kx)$ for sufficiently small positive $x$. Thus  from \eqref{eq:F on general conditions} and \eqref{eq:lower bound of g} with a selection of sufficiently small  $\varepsilon$ we have that
\begin{align*}
F(\gamma_n,\rho) &\ge \left ( 1+ \dfrac{\rho}{\kappa_{y^*_{m_0}\to y^*_{m_0+1}}\lambda_{y^*_{m_0}}(x^{m_0}_n)} \right) \left (1-C\dfrac{\varepsilon}{\lambda_{y^*_{m_0}}(x^{m_0}_n)} \right )^{|A^c|}  \\
&\ge \left ( 1+ \dfrac{\rho}{\kappa_{y^*_{m_0}\to y^*_{m_0+1}}\lambda_{y^*_{m_0}}(x^{m_0}_n)} \right) \left (1-\dfrac{2|A^c|C\varepsilon}{\displaystyle \min_{y\to y'\in \Re} k_{y\to y'}}\right ) \\
&\ge 1+ \dfrac{\rho}{\lambda_{y^*_{m_0}}(x^{m_0}_n)} \left( \frac{1}{\kappa_{y^*_{m_0}\to y^*_{m_0+1}}} - \frac{2|A^c|C\varepsilon}{\rho}-\frac{2|A^c|C\varepsilon}{\displaystyle \min_{y\to y'\in \Re} k_{y\to y'}} \right )\\
&\ge 1+ \dfrac{\rho}{\lambda_{y^*_{m_0}}(x^{m_0}_n)} \left( \frac{1}{\kappa_{y^*_{m_0}\to y^*_{m_0+1}}} - \frac{2|A^c|C\varepsilon}{\rho}-\frac{2|A^c|C\varepsilon}{\displaystyle \min_{y\to y'\in \Re} k_{y\to y'}} \right )
\end{align*}
for large $n$, where we used \eqref{eq:lower bound of the intensities} for the second inequality. Finally, we choose $\varepsilon$ to be sufficiently small so that $ \dfrac{1}{\kappa_{y^*_{m_0}\to y^*_{m_0+1}}} - \dfrac{2|A^c|C\varepsilon}{\rho}-\dfrac{2|A^c|C\varepsilon}{\displaystyle \inf_{y\to y'\in \Re} k_{y\to y'}}>0$ and choose $N_\rho$ so that \eqref{eq:lower bound of g} holds for this $\varepsilon$. Hence we have $F(\gamma_{N_\rho}, \rho)>1$ that completes the proof. 
$\hfill$ $\square$

 \subsection{General structural conditions for exponential ergodicity}\label{subsec:technical}
 In this section, we provide general yet technical structural conditions for the existence of a strong tier-1 cycle. These conditions were used to derive Corollaries \ref{cor:structure1}--\ref{cor:structure4}. Throughout this section, we denote by $\langle u,v \rangle=\sum_{i=1}^d (u)_i(v)_i$ the inner product between two vectors $u, v \in \mathbb R^d$.

\begin{thm}\label{thm: structural condition}
    Let $(\Sp,\C,\Re)$ be a reaction network such that there exists a subset of reactions of the form $\{y^*_1\rightarrow y^*_2, y^*_2\rightarrow y^*_3, \cdots, y^*_{M-1}\to y^*_{M}, y^*_M \rightarrow y^*_1 \} \subseteq \Re$.
    Suppose that there exists $u \in \mathbb Z ^{d}_{\ge 0}$ 
     and $m_0\in \{1,2,\dots,M\}$ such that 
     \begin{enumerate}
         \item $I:=\{i:(u)_i \neq 0\}$ is a nonempty proper subset of $\{1,2,\dots,d\}$, and
         \item for each $m \in \{1,2, \dots, M\}$, we have $\langle u, y \rangle < \langle u, y^*_m - y^*_{m_0} \rangle$  whenever there exists $y\to y'\in \Re\setminus \{y^*_m\to y^*_{m+1}\}$ such that $(y)_i\le (y^*_m)_i$ for each $i\in I^c$.
     \end{enumerate} 
     Then $R=(y^*_1\rightarrow y^*_2, y^*_2\rightarrow y^*_3, \cdots, y^*_M \rightarrow y^*_1)$ is a strong tier-1 cycle along a tier sequence $\{x_n\}$, where for each $n$, $(x_n)_i=n^{(u)_i}+(y^*_1)_i$ for each $i \in I$ and $(x_n)_i=(y^*_1)_i$ for each $i \in I^c$.  
\end{thm}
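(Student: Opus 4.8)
The plan is to verify directly that the explicitly constructed sequence $\{x_n\}$ is a tier sequence (Definition~\ref{def_tier-seqence}) and that $R$ satisfies both conditions of Definition~\ref{def:strong tier1} with the very same index $m_0$; the engine of the argument is the elementary observation that, along this sequence, every mass--action intensity is for large $n$ either identically zero or an honest polynomial in $n$ whose degree is dictated by $u$. Concretely, since $x^m_n = x_n + (y^*_m - y^*_1)$, one has $(x^m_n)_i = n^{(u)_i} + (y^*_m)_i$ for $i \in I$ and $(x^m_n)_i = (y^*_m)_i$ for $i \in I^c$; in particular $(x^m_n)_i \ge (y^*_m)_i$ for every $i$, so $\lambda_{y^*_m}(x^m_n) > 0$ for all $n$, which already yields condition~1 of Definition~\ref{def:strong tier1} (take $m = 1$, recalling $x^1_n = x_n$). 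More generally, writing $\lambda_y(x^m_n) = \prod_{i=1}^d \prod_{k=0}^{y_i - 1}\big((x^m_n)_i - k\big)$, the factors with $i \in I^c$ contribute the constant $\prod_{i \in I^c}\frac{(y^*_m)_i!}{((y^*_m)_i - y_i)!}$, which is a positive number when $(y)_i \le (y^*_m)_i$ for all $i \in I^c$ and otherwise makes $\lambda_y(x^m_n)$ identically zero, while the factors with $i \in I$ contribute $\prod_{i \in I} n^{(u)_i y_i}\,(1+o(1))$. Hence, whenever $(y)_i \le (y^*_m)_i$ for all $i \in I^c$,
\[
\lambda_y(x^m_n) = c_{y,m}\, n^{\langle u, y\rangle}\,(1+o(1)), \qquad c_{y,m} > 0,
\]
where I used $\langle u, y\rangle = \sum_{i \in I}(u)_i y_i$ since $(u)_i = 0$ off $I$; as only finitely many pairs $(y,m)$ occur, all error terms are uniform.

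Next I would check that $\{x_n\}$ is a tier sequence. Condition~1 of Definition~\ref{def_tier-seqence} holds because $(x_n)_i \to \infty$ exactly for $i \in I$ (there $(u)_i \ge 1$) and equals the constant $(y^*_1)_i$ for $i \in I^c$, and $I \ne \emptyset$ by hypothesis~1; condition~2 holds since $n \mapsto n^{(u)_i} + (y^*_1)_i$ is increasing on $I$ and constant on $I^c$; and condition~3 follows from the displayed asymptotics applied with $m = 1$, since for $y, y' \in \C_s$ the ratio $\lambda_y(x_n)/\lambda_{y'}(x_n)$ is eventually a ratio of polynomials in $n$ (or of a polynomial and zero), hence has a limit in $[0,\infty]$ under the natural convention that $c/0 = \infty$ for $c > 0$ and $0/0 = 1$ in the degenerate case where both intensities vanish identically — so the intensities are mutually comparable along $\{x_n\}$.

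It then remains to verify condition~2 of Definition~\ref{def:strong tier1}, i.e.\ \eqref{eq:key for strong tier 1}. Fix $m \in \{1,\dots,M\}$ and a reaction $y \to y' \in \Re \setminus \{y^*_m \to y^*_{m+1}\}$. If $(y)_i > (y^*_m)_i$ for some $i \in I^c$, then $\lambda_y(x^m_n) = 0$ for large $n$ and the required limit is trivially $0$. Otherwise $(y)_i \le (y^*_m)_i$ for all $i \in I^c$, so hypothesis~2 applies and gives $\langle u, y\rangle < \langle u, y^*_m - y^*_{m_0}\rangle$; substituting the displayed asymptotics for $\lambda_y(x^m_n)$, $\lambda_{y^*_m}(x^m_n)$, and $\lambda_{y^*_{m_0}}(x^{m_0}_n)$ yields
\[
\frac{\lambda_{y^*_{m_0}}(x^{m_0}_n)\,\lambda_y(x^m_n)}{\lambda_{y^*_m}(x^m_n)} = C\, n^{\langle u, y\rangle - \langle u, y^*_m - y^*_{m_0}\rangle}\,(1+o(1))
\]
for a positive constant $C = c_{y^*_{m_0},m_0}\,c_{y,m}/c_{y^*_m,m}$, whose exponent is strictly negative; hence the limit is $0$, which is exactly \eqref{eq:key for strong tier 1}. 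Combining this with condition~1 established in the first paragraph, $R$ is a strong tier-1 cycle along $\{x_n\}$.

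I expect the only real friction to be the bookkeeping in the first paragraph: keeping straight which coordinates of $x^m_n$ blow up (those in $I$) and which stay bounded (those in $I^c$), and recognizing that the hypothesis's coordinatewise inequality $(y)_i \le (y^*_m)_i$ on $I^c$ is precisely the condition under which $\lambda_y(x^m_n)$ is eventually positive rather than identically zero. Once that dichotomy and the polynomial-degree formula $\deg_n \lambda_y(\cdot) = \langle u, y\rangle$ are in place, the conclusion is just a matter of reading degrees off the exponent vector $u$ and quoting hypothesis~2.
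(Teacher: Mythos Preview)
Your proposal is correct and follows essentially the same route as the paper's own proof: both compute $x^m_n = x_n + (y^*_m - y^*_1)$ explicitly, observe that $\lambda_y(x^m_n)$ is either identically zero (when $(y)_i > (y^*_m)_i$ for some $i\in I^c$) or a polynomial in $n$ of degree $\langle u,y\rangle$, and then read off both the tier-sequence property and \eqref{eq:key for strong tier 1} from the resulting degree comparison $\langle u,y\rangle < \langle u, y^*_m - y^*_{m_0}\rangle$. Your write-up is in fact somewhat more explicit than the paper's about the constant factors $c_{y,m}$ and the dichotomy on $I^c$, but the underlying argument is identical.
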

\begin{proof}
    For $u\in \mathbb Z^d_{\ge 0}$ satisfying the above conditions, we define for each $n$, $(x_n)_i=n^{(u)_i}+(y^*_1)_i$ if $i \in I$ and $(x_n)_i=(y^*_1)_i$ otherwise. Then for all $n$, we have  $\lambda_{y^*_1}(x_n)>0$ since $(x_n)_i\ge (y^*_1)_i$ for each $i$. Hence condition 1 of Definition \ref{def:strong tier1} holds. For each $m$, let $x^m_n=x_n+\sum_{j=1}^{m-1}(y^*_{j+1}-y^*_j)$ and let $x^1_n=x_n$ for each $n$ as usual. We fix $m \in \{1,\dots,M\}$ throughout this proof.

     Now we turn to show condition 2 in Definition \ref{def:strong tier1}. First of all, by the definition of $\{x^m_n\}$, we have $x^m_n \ge y^*_m$. Hence $\lambda_{y^*_m}(x^m_n)>0$ for all $n$. 
      Suppose that there exists $y \to y' \in \Re\setminus\{y^*_m\to y^*_{m+1}\}$.  Since $(x^m_n)_i=(y^*_m)_i$ for each $i\in I^c$, if $(y)_i > (y^*_m)_i$ for some $i\in I^c$,  $\lambda_y(x^m_n)\equiv0$ by definition of mass-action \eqref{mass}. Thus  \eqref{eq:key for strong tier 1} holds.  On contrary, if $(y)_i \le (y^*_m)_i$ for all $i\in I^c$, \eqref{eq:key for strong tier 1} still holds because by the second hypothesis of Theorem \ref{thm: structural condition} we have that
       \begin{align*}
 \lim _{n\rightarrow \infty} \dfrac{\lambda_{y^*_{m_0}}(x^{m_0}_n) \lambda_{y}(x^m_n)}{\lambda_{y^*_m}(x^m_n)}&\ge 
 c\lim_{n\to \infty}\frac{\prod_{i=1}^d (x^m_n)_i^{(y)_i} (x^{m_0}_n)_i^{(y^*_{m_0})_i}}{\prod_{i=1}^d(x^m_n)_i^{(y^*_m)_i}} 
 = c'\lim_{n\to \infty}\frac{n^{\langle u, y\rangle} n^{\langle u, y^*_{m_0}\rangle}}{n^{\langle u, y^*_m\rangle}}\\
 &=c'\lim_{n\to\infty} n^{\langle u,y\rangle - \langle u,y^*_m-y^*_{m_0}\rangle}=0,
    \end{align*}
  for some $c>0$ and $c'>0$. For the first inequality above, we used the fact that $\lambda_{y^*_{m_0}}(x^{m_0}_n), \lambda_{y}(x^{m}_n)$ and $\lambda_{y^*_{m}}(x^{m}_n)$ are  non-zero polynomials with respect to $n$, respectively and their degrees are determined by $\langle u,y^*_{m_0} \rangle, \langle u, y\rangle$ and $\langle u, y^*_m \rangle$, respectively.    
  
  As of now, we showed that $R$ is a strong tier-1 cycle along $\{x_n\}$. To show that $\{x_n\}$ is a tier sequence, we note that
   each coordinate of $x^m_n$ is a polynomial in $n$. Therefore the limits in Definition \ref{def_tier-seqence} are well-defined in $[0,\infty]$. By the definition of $\{x^m_n\}$, it is simple to check that all other conditions in Definition \ref{def_tier-seqence} hold with $\{x_n\}$.  
\end{proof}

We demonstrate the way of identifying $u, R$ and $m_0$ with an example.
\begin{example}\label{ex: structural4}
Let $(\Sp, \C, \Re)$ be the following reaction network.  
    \begin{align}
\begin{split}\label{eq: structural4}
    &\emptyset \ \ \longrightarrow \ \ A+2B+C \\[-0.5ex]
    &  \quad \displaystyle \nwarrow \hspace{1.5cm}  \swarrow  \qquad \qquad \quad 2C \rightleftharpoons 3C.\\[-0.5ex]
& \hspace{0.7cm} 2A+B+2C \hspace{1.2cm} 
\end{split}
\end{align} 
We enumerate the species as $S_1 = A$,  $S_2 = B$, and  $S_3 = C$.
We notice that the reactions in the first connected component form a cycle in the reaction network. So we can first try to set $R = (y^*_1\to y^*_2, y^*_2\to y^*_3, y^*_3\to y^*_1)$ with $y^*_1=\emptyset, y^*_2=A+2B+C$, and $y^*_3=2A+B+2C$.  
To determine $m_0$, we use condition 2 of Theorem \ref{thm: structural condition}. As $y^*_1 < y^*_i$ for $i=2,3$, we must choose $m_0=1$ to hold the inequality $\langle u, y \rangle < \langle u, y^*_m-y^*_{m_0} \rangle$ for some $u\in \mathbb Z^d_{\ge 0}$. 

Then, we choose a proper subset $I$ of $\{1,2,3\}$ for which we construct $u \in \mathbb Z^3_{\ge 0}$ as $(u)_i=0$ for each $i\in I^c$. In order to have condition 2 in Theorem \ref{thm: structural condition}, it is reasonable to choose $I^c$ that minimizes the number of complexes $y$ satisfying $(y)_i\le (y^*_m)_i$ for each $i\in I^c$. Hence we set either $I^c=\{1,2\}, \{2,3\}$ or $\{1,3\}$.  Let $I^c=\{2,3\}$. Then we can set $u=(1,0,0)$ without loss of generality. For $m=1$, there is no $y$ such that $(y)_i\le (y^*_1)_i$ for $i\in I^c$. We also have that $\langle u, y \rangle < \langle u, y^*_2-y^*_1 \rangle$ for $y=\emptyset$, which is sufficient to satisfy condition 2 of Theorem \ref{thm: structural condition} for $m=2$. For $m=3$, $\langle u, y \rangle < \langle u, y^*_2-y^*_1 \rangle$ holds for all $y\in \{\emptyset, 2C\}$, which is also sufficient for condition 2 of Theorem \ref{thm: structural condition}.
\end{example}

\begin{rem}
    Regarding Theorem \ref{thm: stong tier1 and irr is enough}, to finally show non-exponential ergodicity via Theorem \ref{thm: structural condition}, the constructed sequence $\{x_n\}$ needs to be in $\mathbb S$. The simplest case for this is that 
  $\mathbb S= \mathbb Z^d_{\ge 0}$. Otherwise  the constructed sequence $\{x_n\}$ may not belong to the state space $\mathbb S$ (in fact, $\{x_n\}=\{(n,0,0)\}$ constructed in Example \ref{ex: structural4} does not lie in the state space of the reaction network in \eqref{eq: structural4}). However, as explained right after Theorem \ref{thm: stong tier1 and irr is enough} we can fix this problem under certain accessibility conditions (See Appendix \ref{appdx:prove lemma} for more details).
\end{rem}

Despite identifying such $R, m_0$ and $u$ in Theorem \ref{thm: structural condition} is not straightforward, if the given reaction network has a simple structure, its existence is guaranteed, as shown in Corollary \ref{cor:structure1}--\ref{cor:structure3}.

\subsection{Proofs of Corollaries \ref{cor:structure1}--\ref{cor:structure4}}
\label{subsec:proof3}
In this section, we prove Corollaries \ref{cor:structure1}--\ref{cor:structure4} that provided structural conditions for non-exponential ergodicity of reaction networks. The proofs basically have the same outline. 

First, we show that there exists a clear choice of $R, m_0$ and $u$ satisfying the assumptions of Theorem \ref{thm: structural condition}. Then using Theorem \ref{thm: structural condition} we show that $R$ is a strong tier-1 cycle along a tier sequence $\{x_n\}$ constructed as in the proof of Theorem \ref{thm: structural condition}. 

Second, since it is possible that $\{x_n\} \not \subseteq \mathbb S_x$, we use Lemma \ref{lem:new desired tier sequence} to make a new tier sequence $\{\bar x_n\} \subseteq \mathbb S_x$ along which $R$ is still a strong tier-1 cycle. Then non-exponential ergodicity follows by Theorem \ref{thm: stong tier1 and irr is enough}.  Note that the sequences $\{x_n\}$ to be constructed in the following proofs are tier-sequences by their definitions. Hence we can use Lemma \ref{lem:new desired tier sequence}.

For more simplicity, we can assume that the initial state is $y^*_1$ for all the colloraries without loss of generality because $y^*_{m'}$ is reachable from $y^*_1$ (i.e. $y^*_{m'} \in \mathbb S_{y^*_1}$) for any $m'$ by Lemma \ref{lem:cycle makes active paths}. As we will see, therefore, since the tier sequences $\{x_n\}$ used in the proofs of Corollaries \ref{cor:structure1}--\ref{cor:structure4} satisfies $(x_n)_i\equiv (y^*_1)_i$ for each $i\in I^c_{\{x_n\}}$, the initial state $y^*_1$ satisfies the requirement of the initial state of Lemma \ref{lem:new desired tier sequence}. Thus it suffices to check condition 1 and 2 of Lemma \ref{lem:new desired tier sequence}.
 
\begin{proof}[\textbf{Proof of Corollary \ref{cor:structure1}}]

    First, we can choose $u$ as $(u)_{i_1}=1$ for a single element $i_1 \in U$ and $(u)_i=0$ for the remaining components. Let $R = (y^*_1\rightarrow y^*_2, y^*_2\rightarrow y^*_3, \cdots, y^*_{M-1}\to y^*_{M}, y^*_M \rightarrow y^*_1)$. 
    Fix $m \in \{1,2,\dots, M\}$. Let $y \to y' \in \Re \setminus \{y^*_m \to y^*_{m+1}\}$ such that $(y)_i \le (y^*_m)_i$ for each $i \neq i_1$. By assumption 1 and 2, $(y)_i < (y^*_m)_i$ for all $i$. Then we have 
    \begin{align*}
        \langle u,y \rangle < \langle u,y^*_m \rangle = \langle u,y^*_m - y^*_{m_0} \rangle.
    \end{align*} 
    By Theorem \ref{thm: structural condition}, $R$ is a strong tier-1 cycle along $\{x_n\}$ such that $(x_n)_{i_1}=n^{(u)_{i_1}} + (y^*_1)_{i_1}$  and $(x_n)_i=(y^*_1)_i$ for each $i\neq {i_1}$. 

Now, we check the conditions of Lemma \ref{lem:new desired tier sequence}.
  By the definition of $u$ and $\{x_n\}$, we have $x_n-x_0=n^{(u)_{i_1}}e_{i_1}$ and $e_{i_1}\in span\{y'-y : y\to y'\in \Re\}$. Hence condition 1 of Lemma \ref{lem:new desired tier sequence} holds. Condition 2 of Lemma \ref{lem:new desired tier sequence} immediately follows from the total ordering given in assumption 1.
\end{proof}

\begin{proof}[\textbf{Proof of Corollary \ref{cor:structure2}}]
	
 First, without loss of generality, assume $i_1=1$ and $i_2=2$. Let $u=(1,0)$ and let $R=(y^*_1\rightarrow y^*_2, y^*_2\rightarrow y^*_3, \cdots, y^*_{M-1}\to y^*_{M}, y^*_M \rightarrow y^*_1)$. Then fix $m \in \{1,2,\dots,M\}$ and let $y \to y' \in \Re \setminus \{y^*_m \to y^*_{m+1}\}$ such that $(y)_2 \le (y^*_m)_2$. By condition 1, $y<y^*_m$, which implies that $y=\bar y_{k'}$ and $y^*_m=\bar y_{k}$ with $k>k'$. Hence, by condition 2 we have $\langle u,y \rangle = (y)_1 = (\bar y_k)_1 < (\bar y_{k'})_1 - (\bar y_1)_1 = \langle u,y^*_m- \bar y_1 \rangle$. Then letting $m_0$ such that $y^*_{m_0} = \bar y_1$, Theorem \ref{thm: structural condition} yields that $R$ is a strong tier-1 cycle along $\{x_n\}$, where $x_n = (n+(y^*_1)_1,(y^*_1)_2)$. 

    Clearly, $(\Sp, \C, \Re)$ is weakly reversible, and assuption 3 implies that $\{x_n-x_0\} \in \text{span}\{y'-y : y\to y'\in \Re\}$. Condition 2 of Lemma \ref{lem:new desired tier sequence} immediately follows from condition 1.
\end{proof}

\begin{proof}[\textbf{Proof of Corollary \ref{cor:structure3}}]
    First, without loss of generality, assume that condition 2 holds for $i_1 = 1$ and $i_2 = 2$. Let $u = (1,0)$ and let $R = (y^*_1 \to y^*_2, y^*_2 \to y^*_3, \dots, y^*_{M-1} \to y^*_M, y^*_M \to y^*_1)$. For fixed $m \in \{1,2,\dots, M\}$, let $y \to y' \in \Re\setminus\{y^*_m \to y^*_{m+1}\}$ such that $(y)_2 \le (y^*_m)_2$.

    We first assume that $y = y^*_{m'}=\bar y_{k'}$ for some $m', k' \in \{1,2,\dots,M\}$. Then by conditions 1 and 4,  $y^*_m = \bar y_{k}$ with $k > k'$. Then, by condition 2, $\langle u,y \rangle = (y)_1 = (\bar y_k)_1 < (\bar y_{k'})_1 - (\bar y_1)_1 = \langle u,y^*_m- \bar y_1 \rangle$.

    Now we assume that $y\in \C\setminus\{\bar y_1,\dots,\bar y_M\}$.  Then condition 5 implies that $f((y)_1+(\bar y_1)_1)< (y)_2$, which implies that $f((y)_1+(\bar y_1)_1)< (y^*_m)_2 $. By the definition of $f$, we must have $(y)_1+(\bar y_1)_1 <  (y^*_m)_1$, which further implies that $\langle u,y \rangle = (y)_1  < (y^*_m)_1 - (\bar y_1)_1 = \langle u,y^*_m- \bar y_1 \rangle$. Thus, with $y^*_{m_0} = \bar y_1$, Theorem \ref{thm: structural condition} yields that $R$ is a strong tier-1 cycle along $\{x_n\}$, where $x_n = (n+(y^*_1)_1, (y^*_1)_2)$.

    Now, we check the conditions of Lemma \ref{lem:new desired tier sequence}. We first notice that condition 3 implies that $\{x_n-x_0\} \in \text{span}\{y'-y : y\to y'\in \Re\}$. Condition 2 of Lemma \ref{lem:new desired tier sequence} immediately follows from condition 1.
\end{proof}

In the following proof, each $y \in \bar \C$ can be regarded as either a vector in $\mathbb Z^2_{\ge 0}$ or a vector in $\mathbb Z^d_{\ge 0}$ such that $(y)_i=3$ for $i\ge 3$. We also denote the state space of the associated Markov chains for $(\Sp,\C,\Re)$ and $(\bar \Sp, \bar \C, \bar \Re)$ by $\mathbb S \subseteq \mathbb Z^d_{\ge 0}$ and $\bar{\mathbb S} \subseteq \mathbb Z^2_{\ge 0}$, respectively.
\begin{proof}[\textbf{Proof of Corollary \ref{cor:structure4}}]
First, without loss of generality, assume that condition 2 of Corollary $\ref{cor:structure3}$ holds for $i_1 = 1$ and $i_2 = 2$. Let $u = (1,0,\dots,0)$ and let $R = (y^*_1 \to y^*_2, y^*_2 \to y^*_3, \dots, y^*_{M-1} \to y^*_M, y^*_M \to y^*_1)$. For fixed $m \in \{1,2,\dots, M\}$, let $y \to y' \in \Re \setminus \{y^*_m \to y^*_{m+1}\}$ such that $(y)_i \le (y^*_m)_i$ for each $i \in \{2,3,\dots, d\}$. That is, $(y)_2 \le (y^*_m)_2$ and $(y)_i = 0$ for each $i \ge 3$. 
Hence with the same way as the proof of Corollary \ref{cor:structure3}, we can show that $\langle u, y \rangle < \langle u, y^*_m - \bar y_1 \rangle$. Hence by Theorem \ref{thm: structural condition}, $R$ is a strong tier-1 cycle along a sequence $\{x_n\}$ such that $x_n = (n+(y^*_1)_1, (y^*_1)_2,0, \dots,0)$.

Instead of directly obtaining a new sequence $\{\bar x_n\} \subseteq \mathbb S_x$ from $\{x_n\}$ using Lemma \ref{lem:new desired tier sequence}, we first find a tier-sequence $\{\bar z_n\} \subseteq \Z^2_{\ge 0}$ of $(\bar \Sp,\bar \C, \bar \Re)$ and then construct a desired tier sequence using $\{\bar z_n\}$. 
We first note that $R$ is a strong tier-1 cycle of $(\bar \Sp,\bar \C, \bar \Re)$ along $\{z_n\} \subseteq \Z^2_{\ge 0}$, where $z_n = (n+(y^*_1)_1,(y^*_1)_2)$. 
Let $z = y^*_{1} \in \Z^2_{\ge 0}$.
Then as the proof of Corollary $\ref{cor:structure3}$, Lemma \ref{lem:new desired tier sequence} implies that there exists a tier sequence $\{\bar z_n\} \subseteq \bar{\mathbb S}_{z}$ such that $(\bar z_n)_2 \equiv (y^*_1)_2$ and $\sup_k\lVert \bar z _k - z_{n_k}\rVert_{\infty}<\infty$ for some subsequence $\{z_{n_k}\}$ of $\{z_n\}$.

Now we define $\{\bar x_n\} \subseteq \Z^d_{\ge 0}$ such that $(\bar x_n)_i \equiv (\bar z_n)_i$ for $i \in \{1,2\}$ and $(\bar x_n)_i \equiv 0$ for $i\ge 3$. Note that $R$ is a strong tier-1 cycle along $\{x_{n_k}\}$. Since $\bar z_n$ satisfies all the condition of Lemma \ref{lem: perturbation} with $\{z_{n_k}\}$, so $\{\bar x_n\}$ does with $\{x_{n_k}\}$ by the definition of $\{\bar x_n\}$.
Therefore, 
there exists a subsequence of $\{\bar x_n\}$ that is a tier sequence such that $R$ is a strong tier-1 cycle along $\{\bar x_n\}$. Finally we note that $\bar z_n$ is reachable from $z$ as $\{\bar z_n\} \subseteq \bar{\mathbb S}_{z}$. Then using the same reactions in $\bar \Re$ for the transitions from $z$ to $\bar z_n$ for each $n$, $X$ can transition from $x$ to $\bar x_n$ for each $n$ as well. Hence $\{\bar x_n\} \subseteq \mathbb S_x$ 
\end{proof}

\section{Spectral gap and congestion ratios}\label{sec:spectral}
In this section, we briefly discuss some connections of our results to the spectral gap and the congestion ratio of continuous-time Markov chains. The spectral gap of a continuous-time Markov chain $X$ on a discrete state space $\mathbb S$ is the smallest non-zero eigenvalue of $-\mathcal A$ (equivalently the operator norm of the Dirichlet form defined as $\mathcal E(f,f)=-\sum_{x\in \mathbb S} (\mathcal Af)(x) f(x)$), where $\mathcal A$ is the infinitesimal generator of $X$. The spectral gap is used to  study exponential ergodicity as a strictly positive spectral gap implies that the associated ergodic Markov chain is exponentially ergodic. 
See \cite{bakry2008rate, MR1106707, YuvalLevinMixing, Liggett89, Han16} for more details about the spectral gap methods. 

Hence we remark that the sufficient conditions for non-exponential ergodicity in this paper also imply `zero spectral gap'. We highlight again that the simple 2-dimensional detailed balanced Markov chain in Example \ref{ex:the main example 1} is non-exponentially ergodic so its spectral gap must be zero.  
Furthermore, as we showed in Section \ref{sec:detail complex} we can construct many detailed balanced, non-exponentially ergodic 
 Markov chains, which have zero spectral gaps.

In the literature of  Markov chains, the concept of so-called the congestion ratio is commonly employed to show a positive spectral gap  (see for instance \cite[Chapter 3]{saloff1997lectures} or \cite[Section 3.3]{berestycki2009eight}). For an ergodic Markov chain $X$ defined on a discrete state space $\mathbb S$ with the stationary distribution $\pi$, we first define a collection of paths,
\begin{align*}
    \Gamma=\{\gamma_{z,w} \ \text{or} \ \gamma_{w,z} \ \text{but not both}:  z,w \in \mathbb S \ \text{with $z\neq w$}\},
\end{align*}  
where $\gamma_{z,w}=(z_1,\dots,z_{|\gamma_{z,w}|}) \subseteq \mathbb S$ is a path such that $z_1=z$ and $z_{|\gamma_{z,w}|}=w$ and $\gamma_{w,z}$ is defined similarly.
Then the congestion ratio is defined as for a given $\Gamma$
\begin{align}\label{eq: congestion ratio}
        C_{cr} = \sup_{\{(s,u)\in \mathbb S \times \mathbb S : q_{s,u}>0\}} \left\{ \frac{1}{q_{s,u}\pi(s)} \sum_{z,w:(s,u)\in \gamma \in \Gamma}|\gamma|\pi(z)\pi(w) \right\},
    \end{align}
    where $\{z,w:(s,u)\in \gamma \in \Gamma \}$ means the collection of pairs of two states $z$ and $w$ for which there exists either a path $\gamma_{z,w}=(z(1),\dots,z(|\gamma_{xy}|)) \in \Gamma$ such that  $(z(i),z(i+1))=(s,u)$ or a path $\gamma_{w,z} =(w(1),\dots,w(|\gamma_{w,z}|))\in \Gamma$ such that $(w(i),w(i+1))=(s,u)$ for some $i$.  If $C_{cr}<\infty$ for some $\Gamma$, then $X$ is exponentially ergodic with $\rho=C_{cr}^{-1}$ in \eqref{eq:expo ergo} \cite[Section 3.3]{berestycki2009eight}. Hence our sufficient conditions for non-exponential ergodicity also imply that $C_{cr}=\infty$ for any choice of $\Gamma$. In \cite[Section 2.4]{anderson2023new}, it was shown that the congestion ratio of the associated Markov chain defined in Example \ref{ex:the main example 1} is infinity for any $\Gamma$. Notably, it was shown that for any $\Gamma$ either
    \begin{align*}
        &\limsup_{n\to \infty} \left (\frac{1}{q_{(n,0),(n+1,1)}\pi(n,0)} \sum_{z,w:(s,u)\in \gamma\in \Gamma}|\gamma|\pi(z)\pi(w) \right )=\infty \quad \text{or }\\ 
        &\limsup_{n\to \infty} \left (\frac{1}{q_{(n+1,1),(n,0)}\pi(n+1,1)} \sum_{z,w:(s,u)\in \gamma\in \Gamma}|\gamma|\pi(z)\pi(w) \right )=\infty,
    \end{align*}
    This means that the infinite congestion takes place on the active paths $\gamma_n=((n,0),(n+1,1),(n,0))$ such that $F(\gamma_n,\rho)>1$ as we showed in Example \ref{ex:strong tier 1}. This example, thus, shows that the conditions of Theorem \ref{thm:maybe main} are reasonable in the sense that they cause infinite congestion on the state space.

\section{Discussion}\label{sec:discussion}
In this paper, we provided a path method for showing non-exponential ergodicity of continuous-time Markov chains on a countable state space. We showed that on the series of paths satisfying the conditions of Theorem \ref{thm:maybe main}, the Markov chain spends a long time on the paths, and in turn non-exponential ergodicity follows. By using this path method, we identified several classes of stochastic reaction networks that admit non-exponential ergodicity. Remarkably, the classes are characterized with only structural features independently from the reaction rate parameters. The structural conditions in Corollaries \ref{cor:structure1}--\ref{cor:structure4} combined with the coupling method shown in Corollary \ref{cor:subnetwork} were used to create detailed balanced Markov chains that are non-exponentially ergodic. Similarly, non-exponentially ergodic, complex balanced stochastic reaction systems can be made.

One possible generalizations of the main results of this paper is to consider reaction networks under general kinetics beyond mass-action kinetics.
By modifying the exponents of ${x_n}$ in the proof of Theorem \ref{thm: structural condition}, the same results of Theorem \ref{thm: structural condition} and   Corollaries \ref{cor:structure1}--\ref{cor:structure4} still hold for some class of general kinetics including generalized mass action kinetics \cite{muller2012generalized}. Another possible generalization to consider reactions that do not form a cycle in the reaction network but can generate paths $\gamma_n$ for which $F(\gamma_n,\rho)>1$ for small enough $\rho$. For example, in the reaction network
\begin{align*}
    2A+2B\to \emptyset\to A+B, \quad 2B\rightleftharpoons 3B,
\end{align*}
 the ordered list of reactions $(\emptyset\to A+B, \emptyset\to A+B,2A+2B\to \emptyset)$ generates paths $\gamma_n=((n,0),(n+1,1),(n+2,2),(n,0))$ that hold $F(\gamma_n,\rho)>1$ for small enough $\rho$. 
  In general, however, an arbitrary ordered list of reactions does not necessarily generate a path on the state space (e.g. the ordered list $(2A+2B\to \emptyset, 2A+2B\to \emptyset)$ cannot generate a path if the path is supposed to start at $(n,2)$), while an ordered list of reactions forming a cycle in the reaction network can always form a path on the state space as we discussed in Remark \ref{rem:why cycle}.
Therefore, this generalization is not straightforward.

Our work can serve as a foundation for various future projects. First, in this paper we showed only the cases where non-exponential ergodicity is determined by structural conditions. On the contrary, for a given reaction network, we can study the cases where non-exponential ergodicity can be induced by a choice of parameters. This leads to a phase transition of biochemical systems in qualitative convergence rates (exponential convergence to non-exponential convergence and vice versa) by  certain parameters such as temperature, pressure, or other external disturbances. Also, as ergodicity is an apparent condition for non-exponential ergodicity, we can study the conditions for ergodicity of the classes of reaction networks in Corollary \ref{cor:structure1}--\ref{cor:structure4}.

\appendix
   \section{Proof of Theorem \ref{thm: kingman}}\label{app:prove kingman}
In this section, we prove Theorem \ref{thm: kingman}. The proof is obtained by extracting the key parts of the original proof given in \cite{kingman1964}. 

Throughout this section, let $X$ be an ergodic continuous-time Markov chain on a countable state space $\mathbb S$ and $X(0)=x \in \mathbb S$ almost surely. Let $\pi$ be the unique stationary distribution of $X$. Also, for the fixed initial state $x$, we denote
$p(t)=P_x(X(t)=x)$, $q=q_x$ for simplicity. Our primary tool for the proofs is the function $r(\theta):= \int^{\infty}_0 p(t)e^{-\theta t}dt$. This function is the Laplace transform of $p(t)$ or a type of \emph{resolvent} of $X$. $r(\theta)$ provides insights into the expected hitting times and long-term behaviors of the Markov chain \cite{MT-LyaFosterII, MT-LyaFosterIII}.
For each $m \ge 1$, define $T_{2m-1} := \inf \{s >T_{2m-2}: X(s) \neq x\}$ and $T_{2m} := \inf\{s >T_{2m-1}: X(s)=x\}$, where $T_0:=0$.
    Also, define $U_n := T_n - T_{n-1}$. This represents the time for $X$ to leave state $x$ for each odd $n$ and the returning time to $x$ for each even $n$.
\begin{lem}\label{lem: laplace} 
    For $\theta \in \mathbb C$ such that \normalfont{Re}$(\theta) >0$,
    \begin{align}
       \mathbb E(e^{-\theta U_2})= \int^{\infty}_0 e^{-\theta t}dF(t) = 1 + \frac{\theta}{q} + 
        \frac{1}{q  r(\theta)}
    \end{align}
    Here,  $F(t)$ is the cumulative distribution function of $U_2$.  

\begin{proof}
    $T_n$'s are finite stopping times as $X$ is recurrent. Also we have that $T_{n+1}>T_{n}$ almost surely for each $n$. By strong Markov property, $U_n$'s are independent and for each $m \ge 1$, $U_{2m-1}=U_1$ in distribution, which is exponentially distributed with the rate $q$, and $U_{2m}=U_2$ in distribution.  
Noting that $p(t)=\sum^{\infty}_{m=0} \mathbb P(T_{2m}\le t <T_{2m+1})$, we have for $\theta \in \mathbb C$ such that Re$(\theta)>0$, 
    \begin{align}\label{eq: 1}
        r(\theta) &= \int^{\infty}_0 p(t)e^{-\theta t}dt = \sum^{\infty}_{m=0}
        \int^{\infty}_0 \mathbb P(T_{2m}\le t <T_{2m+1}) e^{-\theta t} dt.
    \end{align}
    By Fubini's theorem, we have 
\begin{align}\label{eq: 2}
        \int^{\infty}_0 \mathbb P(T_{2m}\le t <T_{2m+1}) e^{-\theta t} dt =  \mathbb E \left( \int ^{T_{2m+1}}_{T_{2m}} e^{-\theta t}dt\right) = \theta^{-1} \mathbb E
        (e^{-\theta T_{2m}} - e^{-\theta T_{2m+1}})
    \end{align}
   Moreover, since $T_n=\sum_{k=1}^n U_k$ and $U_k$'s are independent, we have 
    \begin{align}
    \mathbb E (e^{-\theta T_{2m}})-\mathbb E (e^{-\theta T_{2m+1}}) &= \mathbb E (e^{-\theta \sum_{k=1}^{2m}X_k}) - \mathbb E (e^{-\theta \sum_{k=1}^{2m+1}U_k}) \notag \\&= \prod_{k=1}^{2m} \mathbb E (e^{-\theta U_k}) - \prod_{k=1}^{2m+1} \mathbb E (e^{-\theta U_k})\notag \\
    &= \left (\prod_{k=1}^{m}\mathbb E (e^{-\theta U_{2k-1}})\mathbb E (e^{-\theta U_{2k}}) \right ) \mathbb E \left ( 1- \mathbb E (e^{-\theta U_{2m+1}}) \right)\notag \\
    &= \left( \frac{q}{q+\theta} \int^{\infty}_0 e^{-\theta t} dF(t)\right)^m \left(1-\frac{q}{q+\theta}\right), \label{eq: 3}
    \end{align}
    where the last equality holds since $\mathbb E (e^{-\theta U_1}) = \frac{q}{q+\theta}$ and $\mathbb E (e^{-\theta U_2}) = \int^{\infty}_0 e^{-\theta t} dF(t)$. 
    
    By putting \eqref{eq: 3} into \eqref{eq: 2} and further putting \eqref{eq: 2} into \eqref{eq: 1}, we get
    \begin{align*}
        r(\theta) = \frac{1}{q+\theta} \sum^{\infty}_{m=0} \left( \frac{q}{q+\theta} \int^{\infty}_0 e^{-\theta t} dF(t)\right)^m
        &= \frac{1}{q+\theta}\left(1-\frac{q}{q+\theta} \int ^{\infty}_0 e^{-\theta t }dF(t)\right)^{-1}\\ 
        &= \left(q+\theta - q \int ^{\infty}_0 e^{-\theta t }dF(t)\right)^{-1},
    \end{align*}
    where the third equality holds because the absolute value of $\frac{q}{q+\theta} \int^{\infty}_0 e^{-\theta t} dF(t)$ is smaller than $1$.
    Then \eqref{eq: 1} follows for Re$(\theta)>0$.
\end{proof}

\end{lem}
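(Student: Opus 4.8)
The plan is to exploit the regenerative structure of $X$ at the state $x$ and then pass to Laplace transforms. Since $X$ is ergodic it is recurrent, so every $T_n$ is an almost surely finite stopping time with $T_{n+1}>T_n$ a.s., and the strong Markov property makes the increments $U_n=T_n-T_{n-1}$ independent, with each $U_{2m-1}$ exponential of rate $q$ and each $U_{2m}$ distributed as $U_2$ with c.d.f. $F$. The starting observation is the decomposition $p(t)=P_x(X(t)=x)=\sum_{m\ge 0}\mathbb P(T_{2m}\le t<T_{2m+1})$, valid because $X(t)=x$ precisely when $t$ falls in one of the holding intervals $[T_{2m},T_{2m+1})$.

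Next I would compute $r(\theta)=\int_0^\infty p(t)e^{-\theta t}\,dt$ term by term. For $\mathrm{Re}(\theta)>0$, Fubini's theorem gives $\int_0^\infty \mathbb P(T_{2m}\le t<T_{2m+1})e^{-\theta t}\,dt=\mathbb E\big(\int_{T_{2m}}^{T_{2m+1}}e^{-\theta t}\,dt\big)=\theta^{-1}\mathbb E(e^{-\theta T_{2m}}-e^{-\theta T_{2m+1}})$. Writing $T_n=\sum_{k=1}^n U_k$ and using independence, each of these expectations factors into a product of the $\mathbb E(e^{-\theta U_k})$; separating odd from even indices and substituting $\mathbb E(e^{-\theta U_{2k-1}})=q/(q+\theta)$ and $\widehat F(\theta):=\mathbb E(e^{-\theta U_{2k}})=\int_0^\infty e^{-\theta t}\,dF(t)$ turns the $m$-th term into
\[
\mathbb E(e^{-\theta T_{2m}})-\mathbb E(e^{-\theta T_{2m+1}})=\left(\frac{q}{q+\theta}\widehat F(\theta)\right)^{m}\left(1-\frac{q}{q+\theta}\right).
\]

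Summing the resulting geometric series in $m$ yields $r(\theta)=\big(q+\theta-q\widehat F(\theta)\big)^{-1}$, and solving this identity for $\widehat F(\theta)$ gives $\widehat F(\theta)=1+\theta/q+1/(q\,r(\theta))$, which is exactly the assertion. The step I expect to be the main obstacle — and essentially the only one needing real care — is justifying convergence of that geometric series, i.e. that $\big|\frac{q}{q+\theta}\widehat F(\theta)\big|<1$ when $\mathrm{Re}(\theta)>0$: here I would bound $|\widehat F(\theta)|\le\int_0^\infty e^{-\mathrm{Re}(\theta)t}\,dF(t)\le 1$, strictly, since an excursion away from $x$ takes strictly positive time so $F$ is not degenerate at $0$, and note $|q/(q+\theta)|<1$ for $\mathrm{Re}(\theta)>0$. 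This strict bound also retroactively licenses the interchange of summation and integration used in the first step. The remaining pieces — almost sure finiteness of the $T_n$, the product formula for Laplace transforms of sums of independent increments, and the application of Fubini — are routine consequences of recurrence and the strong Markov property.
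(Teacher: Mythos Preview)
Your proposal is correct and follows essentially the same route as the paper: the regenerative decomposition $p(t)=\sum_{m\ge 0}\mathbb P(T_{2m}\le t<T_{2m+1})$, the Fubini step, the factorization of $\mathbb E(e^{-\theta T_n})$ via independence, and the geometric-series summation to obtain $r(\theta)=(q+\theta-q\widehat F(\theta))^{-1}$ are all identical, and your justification of the strict bound $|\tfrac{q}{q+\theta}\widehat F(\theta)|<1$ is in fact slightly more explicit than the paper's. (One minor point, shared with the paper's stated formula: solving $r(\theta)=(q+\theta-q\widehat F(\theta))^{-1}$ actually yields $\widehat F(\theta)=1+\theta/q-1/(q\,r(\theta))$, so the sign on the last term should be negative.)
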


\begin{lem}\label{lem: laplace2}
    Let $V$ be a positive random variable.  
    Then there exists $\infty \le \sigma_c \le 0$ such that $\mathbb E (e^{-\theta V})$ converges for \normalfont{Re}$(\theta) > \sigma_c$ and diverges for \normalfont{Re}$(\theta) < \sigma_c$. Furthermore, $\mathbb E (e^{-\theta V})$ is analytic on Re$(\theta) > \sigma_c$ and singular at $\theta = \sigma_c$, that is, $\mathbb E (e^{-\theta V})$ does not have the analytic continuation whose domain contains the point $\sigma_c$. 
\end{lem}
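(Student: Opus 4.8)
The plan is to identify $\sigma_c$ as the abscissa of convergence of the Laplace transform $L(\theta):=\mathbb E(e^{-\theta V})$, to establish analyticity of $L$ on the open half-plane to the right of $\sigma_c$ by differentiating under the expectation, and then to rule out analytic continuation across $\sigma_c$ by a Pringsheim--Landau argument exploiting the sign pattern of the Taylor coefficients of $L$.

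First I would work on the real line. Since $V>0$ almost surely, the map $\theta\mapsto e^{-\theta V}$ is pathwise non-increasing, so $g(\theta):=\mathbb E(e^{-\theta V})\in(0,\infty]$ is a non-increasing function of real $\theta$ with $g(0)=1$. Setting $\sigma_c:=\inf\{\theta\in\mathbb R:g(\theta)<\infty\}$, monotonicity gives $g(\theta)<\infty$ for every real $\theta>\sigma_c$ and $g(\theta)=\infty$ for every real $\theta<\sigma_c$, while $g(0)<\infty$ forces $\sigma_c\le 0$. For complex $\theta$ one has $|e^{-\theta V}|=e^{-\mathrm{Re}(\theta)\,V}$, hence $\mathbb E|e^{-\theta V}|=g(\mathrm{Re}(\theta))$; this immediately yields that $\mathbb E(e^{-\theta V})$ converges (absolutely) when $\mathrm{Re}(\theta)>\sigma_c$ and diverges when $\mathrm{Re}(\theta)<\sigma_c$.

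Next I would prove that $L$ is holomorphic on $\{\theta:\mathrm{Re}(\theta)>\sigma_c\}$ with $L^{(k)}(\theta)=(-1)^k\mathbb E(V^ke^{-\theta V})$ for every $k\ge 0$. Fixing $\theta$ with $\mathrm{Re}(\theta)>\sigma_c$ and choosing $\sigma'$ with $\sigma_c<\sigma'<\mathrm{Re}(\theta)$, the elementary bound $V^ke^{-\mathrm{Re}(\theta)V}\le C_k\,e^{-\sigma' V}$ (polynomial growth dominated by exponential decay) together with $g(\sigma')<\infty$ justifies differentiation under the expectation by dominated convergence; alternatively one can invoke Morera's theorem and Fubini. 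In particular $L$ is analytic at each real point $\theta_0>\sigma_c$, with Taylor coefficients $c_k:=L^{(k)}(\theta_0)/k!=(-1)^k\mathbb E(V^ke^{-\theta_0V})/k!$, so $|c_k|=\mathbb E(V^ke^{-\theta_0V})/k!$.

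The crux is to rule out analytic continuation past $\sigma_c$ when $\sigma_c>-\infty$ (if $\sigma_c=-\infty$ there is nothing to prove). Suppose, toward a contradiction, that $L$ extends analytically to a disk $D(\sigma_c,\varepsilon)$ of radius $\varepsilon>0$; then $L$ is holomorphic on the open set $\Omega:=\{\mathrm{Re}(\theta)>\sigma_c\}\cup D(\sigma_c,\varepsilon)$. For a fixed real $\theta_0>\sigma_c$, a short computation shows that every point of $\mathbb C\setminus\Omega$ is at distance at least $\sqrt{(\theta_0-\sigma_c)^2+\varepsilon^2}$ from $\theta_0$, so the Taylor series of $L$ at $\theta_0$ has radius of convergence $R>\theta_0-\sigma_c$. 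Hence one may fix a real $\theta_1<\sigma_c$ with $\theta_0-\theta_1<R$, and then
\[
\sum_{k=0}^{\infty}\frac{(\theta_0-\theta_1)^k}{k!}\,\mathbb E\!\left(V^ke^{-\theta_0V}\right)=\sum_{k=0}^{\infty}|c_k|\,(\theta_0-\theta_1)^k<\infty ,
\]
the series converging because $\theta_0-\theta_1<R$. Since all terms are nonnegative, Tonelli's theorem allows interchanging the sum and the expectation, giving
\[
\sum_{k=0}^{\infty}\frac{(\theta_0-\theta_1)^k}{k!}\,\mathbb E\!\left(V^ke^{-\theta_0V}\right)=\mathbb E\!\left(e^{-\theta_0V}\sum_{k=0}^{\infty}\frac{\big((\theta_0-\theta_1)V\big)^k}{k!}\right)=\mathbb E\!\left(e^{-\theta_1V}\right).
\]
Thus $\mathbb E(e^{-\theta_1V})<\infty$ with $\theta_1<\sigma_c$, contradicting $g(\theta_1)=\infty$; hence $L$ admits no analytic continuation to a neighborhood of $\sigma_c$, i.e., $L$ is singular at $\sigma_c$. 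I expect this last step to be the main obstacle: one must pin down that the Taylor expansion about a real base point $\theta_0>\sigma_c$ genuinely reaches to the left of $\sigma_c$ (the radius-of-convergence estimate) and then convert the resulting nonnegative series back into $\mathbb E(e^{-\theta_1V})$ via Tonelli; by comparison the convergence/divergence dichotomy and the analyticity to the right of $\sigma_c$ are routine.
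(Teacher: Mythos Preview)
Your argument is correct. The paper does not actually prove this lemma at all: it simply cites Widder's textbook on the Laplace transform (Section~II.5) and moves on. What you have written is precisely the classical proof one finds there, namely the monotone identification of the abscissa of convergence, analyticity on the right half-plane via differentiation under the integral, and the Pringsheim--Landau step showing that the real boundary point $\sigma_c$ is a genuine singularity by exploiting the fixed sign of the Taylor coefficients at any real $\theta_0>\sigma_c$. Your geometric lower bound $\sqrt{(\theta_0-\sigma_c)^2+\varepsilon^2}$ for the radius of convergence is correct (the nearest point of $\mathbb C\setminus\Omega$ to $\theta_0$ is $\sigma_c\pm i\varepsilon$), and the Tonelli interchange that recovers $\mathbb E(e^{-\theta_1 V})$ from the nonnegative series is exactly the right tool. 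So there is no discrepancy in approach to report---you have supplied the proof that the paper outsources to a reference.
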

\begin{proof}
    See  \cite[Section $\uppercase\expandafter{\romannumeral2\relax}.5$]{widder2015laplace}.
\end{proof}
 $\sigma_c$ in Lemma \ref{lem: laplace2} is called the \emph{axis of convergence}. Now we turn to provide the proof of Theorem \ref{thm: kingman}.
\begin{proof}[\textbf{Proof of Theorem \ref{thm: kingman}}]
We will prove the result using a contradiction. Suppose that there exist constants $B(x)>0$ and $\eta>0$ such that 
\begin{align}\label{eq:assume ex ergo}
    |p(t)-\pi(x)|\le B(x) e^{-\eta t}, \text{ for each $t\ge 0$}. 
\end{align}
 Since $\mathbb E _x (e^{\rho \tau _ x}) = \infty$ for all $\rho > 0$, $\mathbb E _x (e^{-\theta \tau_x})$ has the axis of convergence $\sigma_c = 0$ and is singular at $\theta = 0$ by Lemma \ref{lem: laplace2}. 
Note that $\tau_x = U_1 + U_2$, and $U_1$ and $U_2$ are independent. Thus we have that
    $$\mathbb E(e^{-\theta \tau_x}) =
    \mathbb E(e^{-\theta U_1}) \mathbb E(e^{-\theta U_2}) = \frac{q}{\theta + q} \mathbb E(e^{-\theta U_2}) = \frac{q}{\theta + q} \int^{\infty}_0 e^{-\theta t}dF(t).$$
 By this, it is clear that $\int^{\infty}_0 e^{-\theta t}dF(t)$ has the same axis of convergence $\sigma_c = 0$ and is singular at $\theta = 0$. 
      To derive a contradiction, we will show that $\int^{\infty}_0 e^{-\theta t}dF(t)$ has an analytic continuation with a domain containing $\sigma_c=0$.

    By assumption \eqref{eq:assume ex ergo}, $p(t) = \pi(x) + g(t)$ such that $e^{\alpha t}g(t)\to 0$, as $t\to \infty$ for some $\alpha > 0$. Hence for Re$(\theta) >0$, we have
    \begin{align*}
    r(\theta) = \int^\infty_0 p(t) e^{-\theta t}dt = \int^\infty_0 \pi(x) e^{-\theta t} dt + \int^\infty_0 g(t)e^{-\theta t} dt = \frac{\pi(x)}{\theta} + \int^\infty_0 g(t) e^{-\theta t} dt.   
    \end{align*}
    Furthermore, $\int^\infty_0 g(t)e^{-\theta t} dt$ exists and analytic on $\text{Re}(\theta) > -\alpha$. (See \cite[Proposition 12]{kingman1964}).
    Thus $r(\theta)$ has the analytic continuation on the domain $\{\theta \in \mathbb C : \text{Re}(\theta) > -\alpha, \theta \neq 0\}$ with a simple pole at $\theta = 0$. 
    Define $\tilde r(\theta) := \theta r(\theta)$. Then we have  
    \begin{align}
        \int^{\infty}_0 e^{-\theta t}dF(t) = 1 + \frac{\theta}{q} -\frac{\theta}{q \cdot \tilde r(\theta)} \quad \text{for Re$(\theta)>0$}. 
    \end{align} 
     Since $\tilde r(\theta)$ is nonzero at $\theta = 0$ and analytic on Re$(\theta) > -\alpha$, we have that $\tilde r(\theta) \neq 0$ on $B_{\epsilon}(0):=\{\theta \in \mathbb C: |\theta |\le \epsilon\}$ for some $\epsilon>0$. Thus $1 + \frac{\theta}{q} - \frac{\theta}{q \cdot \tilde r(\theta)}$ is well-defined and analytic on the domain $B_{\epsilon}(0) \cup \{\theta \in \mathbb C : \text{Re}(\theta) > 0\}$, which implies that $\int^{\infty}_0 e^{-\theta t}dF(t)$ has an analytic continuation with a domain containing $\sigma_c=0$.  
\end{proof}

\section{Construction of a desired tier sequence under weak reversibility}\label{appdx:prove lemma}

Recall that in the proof of Theorem \ref{thm:maybe main} it was important that the states $x(i)$'s in the path $\gamma_\rho$ are in the irreducible state space $\mathbb S$. Based on this observation, we assumed $\{x_n\}\subseteq \mathbb S$ in Theorem \ref{thm: stong tier1 and irr is enough}.  If this condition is removed, non-exponential ergodicity does not necessarily holds. We demonstrate this with an example. 
\begin{example}\label{ex: need for new seq}
 Let $(\Sp, \C, \Re, \kappa)$ be the following reaction system.
    \begin{align*}
    \emptyset \xrightleftharpoons[\kappa_2]{\kappa_1} A+B 
    \end{align*}
We can check that $(\emptyset \to A+B, A+B \to \emptyset)$ is a strong tier-1 cycle along $\{(n,0)\}$. Let $x$ be an arbitrary state in $\Z^2_{\ge 0}$. For the associated Markov chain $X$ with $X(0) = x$, $\mathbb S_x = \{(a,b) \in \Z^2_{\ge 0} : a-b = (x)_1 - (x)_2\}$, which does not contain $\{(n,0)\}$. Hence Theorem \ref{thm: stong tier1 and irr is enough} cannot be applied to this example. 
Indeed, $X$ is exponentially ergodic as the state space $\mathbb S_x$ is finite for each $x$. There are many ways of verifying this. One can use that the congestion rate \eqref{eq: congestion ratio} is finite for any choice of paths $\Gamma$ on a finite state space.
    \hfill $\triangle$  
\end{example}

As we have shown in Examples \ref{ex: need for new seq} and \ref{ex: structural4}, once a strong tier-1 cycle is identified with a sequence $\{x_n\}$, it is possible that $\{x_n\}$ is not lying in the state space. In this case, if the state space has nice `reachability', we can find another sequence $\{\bar x_n\}$ by modifying $\{x_n\}$ slightly so that $R$ is still a strong tier-1 cycle. We demonstrate this idea with the same reaction network in \eqref{eq: structural4}. 

\begin{example}
    Let $(\Sp, \C, \Re)$ be the following reaction network. 
    \begin{align*}
\begin{split}
    &\emptyset \ \ \longrightarrow \ \ A+2B+C \\[-0.5ex]
    &  \quad \displaystyle \nwarrow \hspace{1.5cm}  \swarrow  \qquad \qquad \quad 2C \rightleftharpoons 3C.\\[-0.5ex]
& \hspace{0.7cm} 2A+B+2C \hspace{1.2cm} 
\end{split}
\end{align*} 
As we have shown in Example \ref{ex: structural4}, $R = (\emptyset \to A+2B+C, A+2B+C \to 2A+B+2C, 2A+B+2C \to \emptyset)$ is a strong tier-1 cycle along $\{x_n\}$, where $x_n = (n,0,0)$.  
Let $x$ be an arbitrary state such that $x>\vec{0}$.  It is uncertain that ${x_n}$ is in $\mathbb  S_x$. (indeed $\{x_n\} \not \subset \mathbb S_x$ for any $x$). But we can modify $\{x_n\}$ to get a new tier sequence so that it is contained in $\mathbb S_x$ and $R$ is also a strong tier-1 cycle along it.

We demonstrate the main idea in this example.
For simplicity let $x=\vec{0}$ be the initial state. 
First rewriting $x_n= \frac{n}{3}  (3,0,0)$ we define $\bar x_n=\lfloor \frac{n}{3} \rfloor (3,0,0)$ for each $n$, where $\lfloor a  \rfloor$ denotes the largest integer that is smaller than $ a$ for any $a\in \mathbb R$. Due to this slight modification, $x_n$ and $\bar x_n$ are close. Then one can check that $R$ is a strong tier-1 cycle along $\{\bar x_n\}$.

Now to show $\{\bar x_n\}\subseteq \mathbb S_x$, it suffices to show that $(3,0,0)$ is reachable from $x$. If so, using the same reactions for the transition from $x$ to $(3,0,0)$, we can show that $k(3,0,0)$ is reachable from $x$ for any integer $k$. In fact using the reactions in the ordered list $(\emptyset \to A + 2B + C, A+2B+C \to 2A+B+2C, A+2B+C \to 2A+B+2C, 3C \to 2C, 3C \to 2C, 3C \to 2C)$, the associated Markov chain $X$ can reach $(3,0,0)$ from $x$.

To finally obtain a tier-sequence, we select the subseqence $(3n,0,0)$ from $\{\bar x_n\}$.
\hfill $\triangle$ 
\end{example}

Now we state the lemma for constructing a desired tier-sequence from a given tier-sequence $\{x_n\}$.
\begin{lem}\label{lem:new desired tier sequence}
    Let $(\Sp,\C,\Re)$ be a weakly reversible reaction network with $\Sp = \{S_1,S_2,\dots,S_d\}$. Suppose that there exists a strong tier-1 cycle $R=(y^*_1\rightarrow y^*_2, y^*_2\rightarrow y^*_3, \cdots, y^*_M \rightarrow y^*_{M+1} )$ along a tier sequence $\{x_n\}$ that satisfies
    \begin{enumerate}
        \item  $\{x_n-x_0\} \subset \text{span}\{y'-y : y\to y'\in \Re\}$, and
        \item $y^*_{m_1} < y^*_{m_2}$ for some $m_1,m_2 \in \{1,2,\dots, M\}$.
    \end{enumerate}
        Let $x$ be an arbitrary state such that $x \ge y^*_{1}$ and $(x)_i = c_i$ for $i \in I^c_{\{x_n\}}$, where $c_i$'s are such that $(x_n)_i \equiv c_i$. Then for the state space $\mathbb S_x$ of the associated Markov chain, there exists a new tier sequence $\{\bar x_n\} \subseteq \mathbb S_x$ along which $R$ is a strong tier-1 cycle. Furthermore, $(\bar x_n)_i \equiv c_i$ for $i \in I^c_{\{x_n\}}$, and $\sup_k\lVert\bar x_k - x_{n_k}\rVert_{\infty}<\infty$ for some subsequence $\{x_{n_k}\}$ of $\{x_n\}$. 
\end{lem}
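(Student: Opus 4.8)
The plan is to construct $\bar x_n$ as a lattice point close to $x_n$ that is genuinely reachable from $x$, and then to transfer the strong tier-1 cycle property from $\{x_n\}$ to $\{\bar x_n\}$ by a bounded-perturbation argument. Let $\Lambda:=\sum_{y\to y'\in\Re}\mathbb Z(y'-y)$, a lattice whose real span is the stoichiometric subspace $\mathrm{span}\{y'-y:y\to y'\in\Re\}$, and let $\mathcal F:=\{v\in\mathbb R^d:(v)_i=0\text{ for all }i\in I^c_{\{x_n\}}\}$; write $c_i$ for the constant value of $(x_n)_i$ on $I^c_{\{x_n\}}$ (which is nonempty by Proposition \ref{prop:strong tier 1}). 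Hypothesis 1, together with $(x_n)_i\equiv c_i$ on $I^c_{\{x_n\}}$, gives $x_n-x_0\in\mathrm{span}\{y'-y\}\cap\mathcal F$. Since $\mathrm{span}\{y'-y\}$ and $\mathcal F$ are both rational subspaces, $\Lambda\cap\mathcal F$ is a full-rank, hence cocompact, lattice inside $\mathrm{span}\{y'-y\}\cap\mathcal F$; and each $x_n$ lies within a fixed $\ell^\infty$-distance of the affine plane $x+(\mathrm{span}\{y'-y\}\cap\mathcal F)$, the only obstruction being the fixed vector $x_0-x$ supported on $I_{\{x_n\}}$. Therefore there is a constant $D$ and, for every $n$, a point $\bar x_n\in x+(\Lambda\cap\mathcal F)$ with $\|\bar x_n-x_n\|_\infty\le D$. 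By construction $(\bar x_n)_i=(x)_i=c_i$ for $i\in I^c_{\{x_n\}}$ and $(\bar x_n)_i\to\infty$ for $i\in I_{\{x_n\}}$, so after discarding finitely many indices we may assume $\bar x_n\in\mathbb Z^d_{\ge0}$ and $\bar x_n\ge y^*_1$ (which also forces $\bar x^m_n\ge y^*_m$ for all $m$, as in the proof of Proposition \ref{prop:strong tier 1}).

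Next I would show $\bar x_n\in\mathbb S_x$. Put $\lambda_n:=\bar x_n-x\in\Lambda\cap\mathcal F$; for large $n$ this is nonnegative, supported on $I_{\{x_n\}}$, with entries tending to $\infty$. Using hypothesis 2, set $w^*:=y^*_{m_2}-y^*_{m_1}$, which is strictly positive in every coordinate, and $z^{m_1}:=x+\sum_{j<m_1}(y^*_{j+1}-y^*_j)=x+y^*_{m_1}-y^*_1$, which satisfies $z^{m_1}\ge y^*_{m_1}$ because $x\ge y^*_1$. The active path from $x$ to $\bar x_n$ is: (a) travel along $R$ from $x$ to $z^{m_1}$ (active by Remark \ref{rem:why cycle}); (b) loop the directed sub-path $y^*_{m_1}\to\cdots\to y^*_{m_2}$ of $R$ some $k_n$ times, reaching $z^{m_1}+k_nw^*$ (valid for every $k_n\ge0$ since $w^*\ge0$); (c) from this state, whose coordinates are all $\ge k_n$, realize the displacement $\lambda_n$ — expanding $\lambda_n$ in a fixed $\mathbb Z$-basis of $\Lambda$, writing each basis vector as a fixed $\mathbb Z$-combination of reaction vectors, and realizing every ``backward'' reaction through the complementary directed path of a cycle containing it (weak reversibility) — which costs $O(\|\lambda_n\|_\infty)$ firings, so choosing $k_n=\Theta(\|\lambda_n\|_\infty)$ keeps every intermediate state above all complexes; (d) undo the inflation by $k_n$ iterations of the reverse sub-path $y^*_{m_2}\to\cdots\to y^*_{m_1}$ (net change $-w^*$), which are available throughout because the running state always dominates $z^{m_1}+w^*=z^{m_1}+y^*_{m_2}-y^*_{m_1}\ge y^*_{m_2}$; (e) run along $R$ from $y^*_{m_1}$ back to $y^*_1$, landing at $x+\lambda_n=\bar x_n$. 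Hence $\bar x_n\in\mathbb S_x$.

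For the remaining claims, since $\|\bar x_n-x_n\|_\infty\le D$ and the two sequences agree exactly on the bounded coordinates $I^c_{\{x_n\}}$, for every complex $y$ and every $m$ the ratio $\lambda_y(\bar x^m_n)/\lambda_y(x^m_n)$ tends to $1$ (or both vanish identically for large $n$), where $\bar x^m_n$ and $x^m_n$ are the cycle-shifts. Consequently the quantities in \eqref{eq:key for strong tier 1} for $\{\bar x_n\}$ are asymptotically equal to those for $\{x_n\}$, so they tend to $0$ with the same $m_0$, and mutual comparability of intensities is inherited likewise; this is exactly the content of the bounded-perturbation Lemma \ref{lem: perturbation}. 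Then I would pass to a subsequence $\{x_{n_k}\}$ along which $(\bar x_{n_k})_i$ is monotone in $k$ for each $i\in I_{\{x_n\}}$ (possible because each such coordinate tends to $\infty$). The resulting $\{\bar x_{n_k}\}$ meets all requirements of Definition \ref{def_tier-seqence}, carries $R$ as a strong tier-1 cycle (Definition \ref{def:strong tier1}), has $(\bar x_{n_k})_i\equiv c_i$ on $I^c_{\{x_n\}}$, and satisfies $\sup_k\|\bar x_{n_k}-x_{n_k}\|_\infty\le D<\infty$, whence non-exponential ergodicity follows via Theorem \ref{thm: stong tier1 and irr is enough}.

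The main obstacle is steps (d)--(e) of the reachability construction. Because $y^*_{m_1}<y^*_{m_2}$ forces $w^*$ to be strictly positive, inflating by $k_nw^*$ also blows up the coordinates in $I^c_{\{x_n\}}$, which must then be deflated back to $c_i$; doing this with arbitrary reactions costs $\Theta(k_n)$ firings that can decrease coordinates by $\Theta(k_n)$, exactly cancelling the available headroom. Performing the inflation and deflation from the cycle vertex $z^{m_1}$ rather than from $x$ — so that the reverse sub-path of $R$ always retains enough reactants ($\ge y^*_{m_2}$) and shrinks coordinates in the controlled, cycle-wise manner — is the device that resolves this, and it is precisely here that hypothesis 2 and the assumption $x\ge y^*_1$ are used. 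A secondary point of care is the lattice-theoretic input that $\Lambda\cap\mathcal F$ is cocompact in $\mathrm{span}\{y'-y\}\cap\mathcal F$, which relies on rationality of both subspaces and is the reason one can only guarantee $\bar x_n$ close to a subsequence of $\{x_n\}$, since the lattice $x+(\Lambda\cap\mathcal F)$ need not meet the sequence $\{x_n\}$ itself boundedly often.
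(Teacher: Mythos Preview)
Your proposal is correct and follows essentially the same route as the paper. The paper constructs $\bar x_n$ by choosing an integer-valued basis $\{w_\ell\}$ of $\mathrm{span}\{y'-y\}\cap\mathcal F$ (via Gaussian elimination over $\mathbb Q$) and setting $\bar x_n=x+\sum_\ell\lfloor b_n^\ell\rfloor w_\ell$, which is exactly your cocompact-lattice approximation phrased concretely; your reachability steps (a)--(e) are an inline unpacking of the paper's Lemma~\ref{lem:accessibility} (inflate by $k$ copies of $w^*=y^*_{m_2}-y^*_{m_1}$, realize the displacement freely at high altitude, then deflate via the complementary arc of the cycle using weak reversibility), and both arguments finish by invoking the bounded-perturbation Lemma~\ref{lem: perturbation}. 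One small clarification: in your (d) the ``reverse sub-path $y^*_{m_2}\to\cdots\to y^*_{m_1}$'' must mean the complementary forward arc of the cycle $R$ (net change $-w^*$), not the literal reversal of arrows, which need not lie in $\Re$; and the subsequence in the conclusion is needed to enforce the tier-sequence monotonicity and ratio-limit conditions, not because of any failure of cocompactness.
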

\begin{rem}
    An initial state $x$ such that $x \ge y^*_{1}$ and $(x)_i = c_i$ for $i \in I^c_{\{x_n\}}$ always exists because $x_n \ge y^*_1$ by result 1 of Proposition \ref{prop:strong tier 1}.
\end{rem}
We will set the new sequence $\bar x_n$ as
\begin{align}\label{eq:desired seq}
    \bar x_n=x+\sum_{y\to y'}d^{y\to y'}_n (y'-y) \quad \text{for some $\{d^{y\to y'}_n\}\subset \Z_{\ge 0}$.}
\end{align}
Then the first part of the proof of Lemma \ref{lem:new desired tier sequence} is to choose $\{d^{y\to y'}_n\}$ so that  $\bar x_n$ is close to $x_n$. Then Lemma \ref{lem: perturbation} below will imply that the $R$ is also a strong tier-1 cycle along $\{\bar x_n\}$. The second part of the proof is to show  reachability from $x$ to $\bar x_n$. We use Lemma \ref{lem:accessibility} below to more efficiently organize this part of the proof. 
To handle such accessibility issues, reactions forming a cycle play a critical role
as highlighted in Remark \ref{rem:why cycle}. We more precisely verify this observation in Lemma \ref{lem:cycle makes active paths}. To enhance the readability, three useful lemmas including Lemma \ref{lem:cycle makes active paths} will be introduced at the end of this section although they are used in the proof of Lemma \ref{lem:new desired tier sequence}. 

We now provide  Lemma \ref{lem: perturbation} and Lemma \ref{lem:accessibility}.
\begin{lem}\label{lem: perturbation}
    Let $(\Sp, \C, \Re)$ be a reaction network. Suppose that there exists a strong tier-1 cycle $R$ along a tier-sequence $\{x_n\}$.  Let $\{\bar x_n\}$  be a sequence such that $\sup_n \lVert x_n-\bar x_n \rVert_{\infty} < \infty$ and $(x_n)_i \equiv (\bar x_n)_i$ for each $i \in I^c_{\{x_n\}}$. Then there exists a subsequence $\{\bar x_{n_k}\}$ of $\{\bar x_n\}$ such that $\{\bar x_{n_k}\}$ is a tier sequence of $(\Sp, \C, \Re)$ and $R$ is a strong tier-1 cycle along $\{\bar x_{n_k}\}$.  
\end{lem}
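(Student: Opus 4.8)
The plan is to transport, one clause at a time, the defining conditions of ``strong tier-1 cycle'' (Definition~\ref{def:strong tier1}) and of ``tier sequence'' (Definition~\ref{def_tier-seqence}) from $\{x_n\}$ to $\{\bar x_n\}$, using that the two sequences differ by a bounded amount that vanishes on the coordinates in $I^c_{\{x_n\}}$. Write $I:=I_{\{x_n\}}$ and $\delta^n:=\bar x_n-x_n$, so $\sup_n\lVert\delta^n\rVert_\infty<\infty$ and $(\delta^n)_i=0$ for $i\in I^c$. First I would record that $I_{\{\bar x_n\}}=I$: for $i\in I^c$ the entry $(\bar x_n)_i=(x_n)_i$ is the constant $c_i$, while for $i\in I$ we have $(\bar x_n)_i=(x_n)_i+(\delta^n)_i\to\infty$ because $(x_n)_i\to\infty$ and $(\delta^n)_i$ is bounded. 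Hence $\lim_n(\bar x_n)_i$ exists in $[0,\infty]$ for every $i$, $I_{\{\bar x_n\}}\neq\emptyset$, and $(\bar x_n)_i\equiv c_i$ for $i\in I^c$; this is condition~1 of Definition~\ref{def_tier-seqence}.

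The technical core is an elementary estimate that I would isolate first: if $z$ is a fixed complex and $\{w_n\},\{w'_n\}\subseteq\mathbb Z^d_{\ge0}$ satisfy $\sup_n\lVert w_n-w'_n\rVert_\infty<\infty$, $(w_n)_i=(w'_n)_i\equiv\gamma_i$ for $i\in I^c$, and $(w_n)_i\to\infty$ for $i\in I$, then (i) $\lambda_z(w_n)=0$ for all large $n$ iff $\gamma_i<z_i$ for some $i\in I^c$ iff $\lambda_z(w'_n)=0$ for all large $n$, and (ii) if $\lambda_z(w_n)>0$ for all large $n$, then $\lambda_z(w'_n)/\lambda_z(w_n)\to1$. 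For (i) one notes that the $I$-coordinates of $w_n$ eventually exceed $z_i$, so vanishing of $\lambda_z(w_n)$ for large $n$ can only be caused by a coordinate in $I^c$, which is a constant. For (ii) the $I^c$-factors of $\lambda_z(w_n)$ and $\lambda_z(w'_n)$ coincide and cancel, while for each of the finitely many $i\in I$ the factor contributed by coordinate $i$ to $\lambda_z(w'_n)/\lambda_z(w_n)$ is a quotient of two products of $z_i$ consecutive integers whose largest terms, $(w'_n)_i$ and $(w_n)_i$, differ by the bounded integer $(\delta^n)_i$; hence that quotient tends to $1$ as $(w_n)_i\to\infty$, uniformly over the finitely many values $(\delta^n)_i$ can take.

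Now I would verify the remaining clauses. For each $m$ put $\bar x^m_n:=\bar x_n+\sum_{j=1}^{m-1}(y^*_{j+1}-y^*_j)=x^m_n+\delta^n$, so the estimate applies to the pair $(x^m_n,\bar x^m_n)$. By result~1 of Proposition~\ref{prop:strong tier 1}, $\lambda_{y^*_m}(x^m_n)>0$ for all $n$ and all $m$; hence, after discarding finitely many indices (uniformly in the finitely many $m$), $\lambda_{y^*_m}(\bar x^m_n)>0$ for all $n$ and $\lambda_{y^*_m}(\bar x^m_n)/\lambda_{y^*_m}(x^m_n)\to1$ by (ii); in particular $\lambda_{y^*_1}(\bar x_n)>0$ for all $n$, which is condition~1 of Definition~\ref{def:strong tier1}. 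For condition~2, let $m_0$ witness the strong tier-1 cycle property of $\{x_n\}$ and let $y\to y'\in\Re\setminus\{y^*_m\to y^*_{m+1}\}$. If $\lambda_y(x^m_n)=0$ for all large $n$, then $\lambda_y(\bar x^m_n)=0$ for all large $n$ by (i), so the expression in \eqref{eq:key for strong tier 1} for $\{\bar x_n\}$ is eventually $0$; otherwise $\lambda_y(\bar x^m_n)/\lambda_y(x^m_n)\to1$ and
\[
\frac{\lambda_{y^*_{m_0}}(\bar x^{m_0}_n)\,\lambda_{y}(\bar x^{m}_n)}{\lambda_{y^*_m}(\bar x^{m}_n)}
=\frac{\lambda_{y^*_{m_0}}(x^{m_0}_n)\,\lambda_{y}(x^{m}_n)}{\lambda_{y^*_m}(x^{m}_n)}\cdot
\frac{\lambda_{y^*_{m_0}}(\bar x^{m_0}_n)}{\lambda_{y^*_{m_0}}(x^{m_0}_n)}\cdot
\frac{\lambda_{y}(\bar x^{m}_n)}{\lambda_{y}(x^{m}_n)}\cdot
\frac{\lambda_{y^*_m}(x^{m}_n)}{\lambda_{y^*_m}(\bar x^{m}_n)}\longrightarrow 0,
\]
the first factor tending to $0$ by \eqref{eq:key for strong tier 1} for $\{x_n\}$ and the other three to $1$. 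Thus $R$ is a strong tier-1 cycle along $\{\bar x_n\}$. The same splitting of $\lambda_y(\bar x_n)/\lambda_{y'}(\bar x_n)$ as $\big(\lambda_y(x_n)/\lambda_{y'}(x_n)\big)\cdot\big(\lambda_y(\bar x_n)/\lambda_y(x_n)\big)\big/\big(\lambda_{y'}(\bar x_n)/\lambda_{y'}(x_n)\big)$, for arbitrary $y,y'\in\C_s$, shows that $\lim_n\lambda_y(\bar x_n)/\lambda_{y'}(\bar x_n)$ exists in $[0,\infty]$ and equals the corresponding limit for the tier sequence $\{x_n\}$ (the cases where one of these intensities vanishes eventually being handled by (i) exactly as for $\{x_n\}$), which is condition~3 of Definition~\ref{def_tier-seqence}.

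It only remains to arrange condition~2 of Definition~\ref{def_tier-seqence}, monotonicity of the $I$-coordinates: since $I$ is finite and $(\bar x_n)_i\to\infty$ for each $i\in I$, I would extract successively, one coordinate at a time, a subsequence $\{\bar x_{n_k}\}$ along which every $(\bar x_{n_k})_i$ with $i\in I$ is non-decreasing, while on $I^c$ the entries stay equal to $c_i$. Passing to this subsequence preserves every limit statement proved above, so $\{\bar x_{n_k}\}$ is a tier sequence and $R$ is a strong tier-1 cycle along it, as claimed. The only genuinely delicate step is the ratio estimate (ii) together with the bookkeeping in (i) of which coordinates can force an intensity to vanish; once those are in place, the transfer of every remaining property is routine.
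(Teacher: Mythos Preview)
Your argument is correct, but the paper takes a shorter route. Instead of proving a ratio estimate valid for an arbitrary bounded integer perturbation $\delta^n$, the paper first exploits pigeonhole: since $x_n-\bar x_n$ is a bounded sequence in $\mathbb Z^d$, it takes only finitely many values, so some value $v$ (with $(v)_i=0$ for $i\in I^c$) occurs infinitely often, and along that subsequence one has $\bar x_{n_k}=x_{n_k}+v$ exactly. With a \emph{constant} shift the verification of both Definition~\ref{def_tier-seqence} and Definition~\ref{def:strong tier1} becomes almost immediate: monotonicity and the constant-on-$I^c$ property transfer directly from $\{x_{n_k}\}$, and for each source complex $y$ one has either $\lambda_y(x_{n_k}+v)\equiv 0$ or $\lambda_y(x_{n_k}+v)/\lambda_y(x_{n_k})\to 1$, so all the limits in \eqref{eq:key for strong tier 1} and in condition~3 of Definition~\ref{def_tier-seqence} coincide with those for $\{x_{n_k}\}$.

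Your approach trades this one-line reduction for a direct asymptotic estimate (your item~(ii)) that holds along the full sequence, with the subsequence extracted only at the very end to force monotonicity. This is perfectly valid and arguably more robust---it would work verbatim for real-valued bounded perturbations, not just integer ones---but it is longer, and your remark that the ratio in (ii) tends to $1$ ``uniformly over the finitely many values $(\delta^n)_i$ can take'' is essentially the pigeonhole observation in disguise. The paper simply promotes that observation to the first step.
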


\begin{proof}
    Note that $\sup_n\Vert x_n - \bar x_n \Vert_{\infty}  <\infty$ and $\{x_n - \bar x_n\} \subseteq \Z^d$. Thus we can choose a subsequence $\{x_{n_k} - \bar x_{n_k}\}$ of $\{x_n-\bar x_n\}$ so that $(x_{n_k} - \bar x_{n_k})_i$ is a constant for each $i$, especially zero for $i \in I^c_{\{x_n\}}$. Then $\bar x_{n_k}\equiv x_{n_k}+v$ for some $v\in \mathbb Z^d$ such that $(v)_i=0$ for each $i\in I^c_{\{x_n\}}$. Since $\{x_n\}$ is a tier-sequence, so $\{\bar x_{n_k}\}$ is.

    Now, by the definition of $\{\bar x_{n_k}\}$, for any $y\in \C_s$, either 
    \begin{align*}
        \lambda_y(\bar x_{n_k})\equiv \lambda_y(x_{n_k}+v) \equiv 0 \quad \text{or} \quad \lim_{n\to \infty}\frac{\lambda_y(x_{n_k})}{\lambda_y(\bar x_{n_k})}=1.
    \end{align*}
    Hence the limits in \eqref{eq:key for strong tier 1} are the same for $\{x_{n_k}\}$ and $\{\bar x_{n_k}\}$. Consequently, $R$ is a strong tier-1 cycle along  $\{\bar x_{n_k}\}$.
\end{proof}

\begin{lem}\label{lem:accessibility}
    Let $(\Sp, \C, \Re)$ be a reaction network. Let $z\in \Z^d_{\ge 0}$. Suppose that $z+w$ is reachable from $z$ for some  $w>\vec{0}$ in $\Z^d_{\ge 0}$. Then for an arbitary $\{d^{y\to y'}\in \Z_{\ge 0} : y\to y' \in \Re\}$ satisfying $\sum_{y\to y'\in \Re} d^{y\to y'}(y'-y) \ge \vec{0}$,  the state $\bar z:=z+\sum_{y\to y'\in \Re} d^{y\to y'}(y'-y)$ is reachable from $z$.
    \end{lem}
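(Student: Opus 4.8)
The statement says: if $z + w$ is reachable from $z$ for some strictly positive $w$, then for any nonnegative integer vector $(d^{y\to y'})_{y\to y'\in\Re}$ with $\sum_{y\to y'} d^{y\to y'}(y'-y)\ge \vec 0$, the state $\bar z := z + \sum_{y\to y'} d^{y\to y'}(y'-y)$ is reachable from $z$. The plan is to first promote the hypothesis to a statement about arbitrarily large multiples of $w$, and then to "absorb" the total displacement $v:=\sum_{y\to y'} d^{y\to y'}(y'-y)$ into a large reservoir so that all intermediate partial sums stay in $\mathbb Z^d_{\ge 0}$.

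First I would argue that for every positive integer $k$, the state $z + kw$ is reachable from $z$. This follows by induction: if $\gamma$ is an active path from $z$ to $z+w$, realized by some ordered list of reactions $R_\gamma$ with $z \ge$ (the source of the first reaction), then applying the \emph{same} ordered list of reactions starting from $z+w$ is still active, because componentwise $z + w \ge z \ge$ every source complex encountered along $\gamma$ after the corresponding partial displacement (all partial displacements of $\gamma$ are $\ge$ the partial displacements needed to keep $\gamma$ active from $z$, shifted up by the nonnegative vector $w$). Hence $z+2w$ is reachable from $z+w$, and by transitivity from $z$; iterating gives $z+kw$ for all $k$. This also tells me, more importantly, that I can reach $z + kw$ and that along the way I never need more than what is available.

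Next, fix the vector $v = \sum_{y\to y'} d^{y\to y'}(y'-y) \ge \vec 0$. I want to build an active path from $z$ to $\bar z = z + v$ by concatenating the reactions $y\to y'$ each used exactly $d^{y\to y'}$ times, but in an order that never drops any coordinate below $0$. The obstruction is that a single reaction $y\to y'$ may have $y'_i - y_i < 0$ in some coordinate $i$, so performing it prematurely could violate nonnegativity even though the net effect $v$ is nonnegative. The fix is to first travel from $z$ to $z + kw$ for $k$ large enough that $z + kw$ dominates every partial sum $z + \sum_{(\text{prefix})} d^{y\to y'}(y'-y)$ componentwise in the "deficit" directions — concretely, choose $k$ so that $kw \ge \sum_{y\to y'} d^{y\to y'}\,(y-y')^{+}$ componentwise, where $(\cdot)^+$ denotes the positive part. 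Then, starting from $z + kw$, apply the $d^{y\to y'}$ copies of each reaction in any order: at every intermediate step the state is $z + kw + (\text{partial sum of the } (y'-y)\text{'s})$, and since $kw$ already exceeds the total possible negative excursion, every intermediate state is $\ge \vec 0$ and in fact $\ge$ every source complex whose reaction is applied next (the source complexes are bounded by a constant, which can be folded into the choice of $k$). This lands us at $z + kw + v$. Finally, I need to remove the extra $kw$: but $z + kw + v = (\bar z) + kw$, and $\bar z \ge \vec 0$, so $\bar z + kw$ is reachable from $\bar z$ by the first step applied at $\bar z$ — hence, running that path backwards is \emph{not} automatically allowed, so instead I should reorganize: reach $\bar z + kw$ from $z$ as above, then observe we actually want to go the other way. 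The clean route is: since $z+kw$ is reachable from $z$, and then $z + kw + v$ is reachable from $z+kw$, we have $\bar z + kw$ reachable from $z$; separately $\bar z + kw$ being reachable from $\bar z$ does not give what we want.

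So let me instead restructure the concatenation to end exactly at $\bar z$: reach $z + kw$ from $z$; then apply the $d^{y\to y'}$ copies of the reactions (staying nonnegative by the choice of $k$) to reach $z + kw + v = \bar z + kw$; then I still must dispose of $kw$. To dispose of $kw$, note that reachability of $z+w$ from $z$ does not immediately give reachability of $z$ from $z+w$, so I cannot simply "undo" it. The correct argument is therefore to choose the bookkeeping differently from the start: I will show $\bar z + kw$ reachable from $z + kw$ implies (by the shift-invariance already used, applied with the roles reversed and the nonnegative cushion) that $\bar z$ is reachable from $z$ provided $\bar z \ge \vec 0$ — this is exactly the lemma's conclusion with $z$ replaced by nothing, so in fact the right statement to prove by induction is: \emph{if $z' + kw$ is reachable from $z + kw$ by a path all of whose intermediate states dominate $kw$ plus a fixed constant, then $z'$ is reachable from $z$ directly}, because the cushion $kw$ is never actually needed. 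I would phrase the whole argument this way: take $k$ large (depending on $v$ and on the source complexes of the reactions involved), reach $z+kw$, perform the reaction multiset to reach $\bar z + kw$, and observe that the \emph{same} reaction multiset in the \emph{same} order, performed directly from $z$, is active and ends at $\bar z$, since at each step the state differs from the corresponding state in the cushioned run by exactly $kw \ge \vec 0$ and the cushioned run's intermediate states were $\ge kw + (\text{sources})$; hence the direct run's intermediate states are $\ge (\text{sources}) \ge \vec 0$. This sidesteps any need to "undo" $kw$.

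The main obstacle, then, is purely the ordering/nonnegativity bookkeeping: one must commit up front to using the hypothesis only to guarantee enough slack (via $z+kw$), and then realize the cushion is cosmetic — the reaction multiset applied directly from $z$ is already active and reaches $\bar z$. I expect the write-up to consist of (i) the shift lemma that $z+kw$ is reachable from $z$ for all $k$, proved by induction using that active paths shift upward, (ii) the choice of $k$ large enough to dominate $\sum_{y\to y'} d^{y\to y'}(y-y')^+$ and the finitely many source complexes, and (iii) the observation that the reaction multiset $\{y\to y' \text{ used } d^{y\to y'} \text{ times}\}$, in any fixed order, is active from $z$ directly because every partial displacement added to $z$ stays $\ge \vec 0$ — indeed $\ge$ the next source complex — thanks to the slack analysis, landing at $\bar z$.
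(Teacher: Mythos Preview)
Your argument contains a genuine gap in step (iii), and in fact the lemma as stated (without weak reversibility) is false, so no argument that avoids that extra hypothesis can succeed.

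The specific error is the claim that ``the cushioned run's intermediate states were $\ge kw + (\text{sources})$,'' from which you want to deduce that the uncushioned intermediate states are $\ge$ sources. But choosing $k$ large only guarantees that the cushioned states $z+kw+P$ (with $P$ a partial sum of the $(y'-y)$'s) dominate the next source; it does \emph{not} guarantee they dominate $kw$ plus the next source, which would require $z+P\ge y$ --- exactly the statement you are trying to establish, and it is independent of $k$. Concretely, take $\Sp=\{A,B\}$, $\Re=\{\emptyset\to A+B,\ 10A\to 9A\}$, $z=(0,0)$, $w=(1,1)$, and $d^{\emptyset\to A+B}=d^{10A\to 9A}=1$. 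Then $v=(0,1)\ge\vec 0$ and $\bar z=(0,1)$, but no ordering of these two reactions is active from $(0,0)$: firing $\emptyset\to A+B$ first lands at $(1,1)$, where $10A\to 9A$ cannot fire; and $10A\to 9A$ cannot fire first. Worse, $(0,1)$ is not reachable from $(0,0)$ in this network at all (the $B$-count is nondecreasing along every transition, so reaching $B=1$ forces exactly one firing of $\emptyset\to A+B$, after which $10A\to 9A$ is unavailable). Hence the lemma itself fails for this network, and no cushion-removal trick can work.

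The paper's proof succeeds because it tacitly uses weak reversibility of $(\Sp,\C,\Re)$ (the lemma is only invoked inside Lemma~\ref{lem:new desired tier sequence}, whose hypotheses include it). The route is: (a) reach $z+kw$ from $z$ for all $k$, as you do; (b) from $z+kw$ with $k$ large, fire each reaction $d^{y\to y'}$ times to reach $\bar z+kw$; (c) observe that the original active path from $z$ to $z+w$, shifted by the nonnegative vector $v=\bar z-z\ge\vec 0$, yields an active path from $\bar z$ to $\bar z+w$, so $\bar z+kw$ is reachable from $\bar z$; (d) invoke weak reversibility (Lemma~\ref{lem:weak rev}) to reverse (c), obtaining an active path from $\bar z+kw$ back down to $\bar z$. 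Step (d) is precisely what your argument lacks a substitute for.
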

\begin{proof}
    We first show that $z+kw$ is reachable from $z$ for any $k\in \mathbb Z_{\ge 0}$. Since $z+w$ is reachable from $z$, there exists an active path $\gamma_1=(z=z(1), z(2),\dots, z(M) = z+w )$. In other words, there exists a reaction $y_m\to y'_m$ such that $z(m+1)=z(m)+y'_m-y_m$ and $\lambda_{y_m\to y'_m}(z(m))>0$ for each $m$. Then $\gamma_2:=(z(1)+w, z(2)+w,\dots, z(M)+w)$ is active since $z(m+1)+w=(z(m)+w)-(y'_m-y_m)$ and $\lambda_{y_m\to y'_m}(z(m)+w)>0$. Since the composition $\gamma_1\circ \gamma_2$ is active, $z(M)+w=z+2w$ is reachable from $z$. Inductively, we can show that $z+kw$ is reachable from $z$  for each $k\in \mathbb Z_{\ge 0}$.

    Now we show that for some $k\in \mathbb Z_{\ge 0}$ i) $\bar z +kw$ is reachable from $z$ and ii) $\bar z$ is reachable from $\bar z +kw$. Once these are shown, the result follows. 
    First, let $z+kw$ be a state with large enough $k$ so that each reaction $y\to y'$ can fire $d^{y\to y'}$ times in any order from $z+kw$. Then $\bar z+kw$ is reachable from $z+kw$. Furthermore, as aforementioned, $z+kw$ is reachable from $z$. Therefore, $\bar z+kw$ is reachable from $z$. Next, to show that  $\bar z$ is reachable from $\bar z+kw$, we first construct an active path from $\bar z$ to $\bar z+kw$ with the reactions $y_m\to y'_m$ used for $\gamma_1$. For each reaction $y_m \to y'_m$, we have 
    \begin{align*}
        \lambda_{y_m\to y'_m}\left (z(m)+\sum_{y\to y'}d^{y\to y'}(y'-y) \right )>0 
    \end{align*}
    since $\lambda_{y_m\to y'_m}(z(m))>0$ and $\sum_{y\to y'}d^{y\to y'}(y'-y)\ge \vec{0}$.
This implies that there exists an active path from $\bar z$ to $\bar z+kw$ through $z(m)+\sum_{y\to y'}d^{y\to y'}(y'-y)$'s. Finally by Lemma \ref{lem:weak rev}, weak reversibility of $(\Sp,\C,\Re)$ implies that there exists an active path from $\bar z+kw$ to $\bar z$.
\end{proof}

Now we turn to the proof of Lemma \ref{lem:new desired tier sequence}.

\begin{proof}[\textbf{Proof of Lemma \ref{lem:new desired tier sequence}}]
Let $\{\bar x_n\}$ be a sequence of the form \eqref{eq:desired seq}. Then for any $\{d^{y\to y'} : y\to y' \in \Re\}$ such that $\sum_{y\to y'\in \Re}d^{y\to y'}(y'-y)>\vec{0}$, $\bar x_n$ is reachable from $x$ by the following facts.
\begin{enumerate}
    \item[i)] $x+y^*_{m_1}-y^*_{m_0}$ is reachable from $x$ due to Lemma \ref{lem:cycle makes active paths}.
    \item[ii)] $x+y^*_{m_2}-y^*_{m_0}$ is reachable from $x+y^*_{m_1}-y^*_{m_0}$ due to Lemma \ref{lem:cycle makes active paths}.
    \item[iii)]  $\bar x_n+y^*_{m_1}-y^*_{m_0}$ is reachable form $x+y^*_{m_1}-y^*_{m_0}$ due to Lemma \ref{lem:accessibility} with $z=x+y^*_{m_1}-y^*_{m_0}$ and $w=x+y^*_{m_2}-y^*_{m_0}$.   
    \item[iv)] $\bar x_n$ is reachable from $\bar x_n+y^*_{m_1}-y^*_{m_0}$ by Lemma \ref{lem:weak rev} since $\bar x_n+y^*_{m_1}-y^*_{m_0}$ is reachable from $\bar x_n$.
\end{enumerate}
Therefore $\{\bar x_n\} \subseteq \mathbb S_x$.

Next, we will show how to choose $\{d^{y\to y'}: y\to y'\in \Re\}$ so that $\sup_k\Vert x_n-\bar x_n \Vert_\infty < \infty$ and $(x_n)_i\equiv (\bar x_n)_i$ for each $i\in I^c_{\{x_n\}}$.
The first step is to find a basis $\{w_\ell\}$ of a vector space 
\begin{align*}
    V:=\text{span}\{y'-y : y\to y'\in \Re\} \cap \{v\in \mathbb R ^d : (v)_{i} = 0, i \in I^c_{\{x_n\}} \},
\end{align*}
such that $w_\ell = \sum_{y \to y' \in \Re} (z_\ell)_{y \to y'} (y'-y)$ for each $\ell \in \{1,2,\dots,L\}$ with some integers $(z_\ell)_{y \to y'}$'s.
     Note that $\nu \in V$ if and only if 
     \begin{align}\label{eq:u in V}
         \nu = \sum_{y \to y' \in \Re} (u)_{y\to y'}(y'-y) = \sum_{i \in I_{\{x_n\}}}(v)_i e_i
     \end{align}
     for some $u \in \mathbb R^{|\Re|}$ and $v\in \mathbb R^{d}$. 
    By Lemma \ref{lem: null space}, $u$ is a linear combination of integer valued vectors $\{z_k\} \subseteq \mathbb Z^{|\Re|}$ because the vector $((u)_1,\dots,(u)_{|\Re|},(v)_1,\dots,(v)_d)^\top$ is a solution to the linear equation in \eqref{eq:u in V} (i.e. $((u)_1,\dots,(u)_{|\Re|},(v)_1,\dots,(v)_d)^\top$ is in the kernel of some integer-valued matrix). Consequently, each $\nu\in V$ can be written as
    \begin{align*}
        \nu=\sum_{y \to y' \in \Re} (u)_{y\to y'}(y'-y) &=\sum_{y \to y' \in \Re} \left (\sum_{k=1}^K a_k z_k \right )_{y\to y'} (y'-y) \\
        &= \sum_{k=1}^K a_k  \underbrace{\left ( \sum_{y\to y'\in \Re} (z_k)_{y\to y'} (y'-y)\right ) }_{w_k}
    \end{align*}
    Therefore we can find a basis $\{w_\ell\}_{\ell=1}^L$ by selecting 
     linearly independent vectors among $w_k$'s.

      
     Now for each $n$, since $x_n-x_0\in V$, we can write $x_n - x_0 = \sum_{\ell=1}^L b_n^\ell w_\ell$, for some $\{b_n^\ell\} \subseteq  \R$. Then we let  $\bar x_n= x + \sum _{\ell=1} ^L \lfloor b_n^\ell  \rfloor w_\ell$. 
     Since $w_\ell$'s are in $\{v\in \mathbb R^d: (v)_i=0, i\in I^c_{\{x_n\}}\}$ and $(x)_i=c_i$ for each $i\in I^c_{\{x_n\}}$, we have $(x_n)_i \equiv (\bar x_n)_i$ for each $i\in I^c_{\{x_n\}}$. Also for all $n$, $\Vert x_n-\bar x_n \Vert_\infty \le \Vert x_0-x \Vert_\infty+\sum_{\ell=1}^L \Vert w_\ell \Vert_\infty$. Lastly, note that $d^{y\to y'}$'s in \eqref{eq:desired seq} are supposed to be non-negative integers. Hence we first write   $\bar x_n=x+\sum_{y\to y'} c^{y\to y'}_n (y'-y)$ for some $\{c^{y \to y'}_\ell\} \subseteq \mathbb Z$ where $\{c^{y \to y'}_\ell\} \subseteq \mathbb Z$ comes from that
     \begin{align*}
         \bar x_n &= x + \sum _{\ell=1} ^L \lfloor b_n^\ell  \rfloor w_\ell = x + \sum _{\ell=1} ^L \lfloor b_n^\ell  \rfloor \left ( \sum_{y \to y' \in \Re} (z_\ell)_{y \to y'} (y'-y) \right) \notag \\
         &= x+ \sum_{y \to y' \in \Re}\left(\sum _{\ell=1} ^L \lfloor b_n^\ell  \rfloor (z_\ell)_{y \to y'}\right)(y'-y). 
     \end{align*}     
      Then we can show that $c^{y\to y'}_n$'s can be replaced with non-negative integers. Let $n$ and $y\to y'$ be a fixed index and a fixed reaction, respectively. Suppose $c^{y\to y'}_n$ is a negative integer. By Definition \ref{def:WR},  weak reversibility implies that there exists a subset $\Re_{y\to y'}$ such that $y'-y=-\left ( (y_3- y_2)+\dots +(y_M-y_{M-1})+(y_1-y_M) \right )$ for some complexes $y_i$'s. Then we can rewrite $\bar x_n$ as $\bar x_n=x+\sum_{y\to y'} d^{y\to y'}_n (y'-y)$ for some $\{d^{y\to y'}_n\} \subseteq \Z_{\ge 0}$.
\end{proof}

\begin{lem}\label{lem:cycle makes active paths}
    Let $(\Sp,\C,\Re)$ be a reaction network. For an ordered list of reactions $R=(y_1\to y_2,y_2\to y_3,\dots,y_{M-1}\to y_{M}, y_M\to y_1) \subseteq \Re$, if $x\ge y_1$, then $x+y_m-y_1$ is reachable from $x+y_k-y_1$ for any distinct $m$ and $k$. 
\end{lem}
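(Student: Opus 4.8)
The plan is to turn the cyclic structure of $R$ directly into an active path and then invoke transitivity of reachability. First I would introduce the notation $x(j) := x + y_j - y_1$ for $j \in \{1,2,\dots,M\}$, so that $x(1) = x$. The single observation driving everything is that $x \ge y_1$ implies $x - y_1 \ge \vec{0}$, hence $x(j) = (x - y_1) + y_j \ge y_j \ge \vec{0}$ for every $j$. In particular each $x(j)$ belongs to $\Z^d_{\ge 0}$ and $\lambda_{y_j}(x(j)) > 0$ by the definition of mass-action kinetics in \eqref{mass}.

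Second, I would check that consecutive states in the list $x(1), x(2), \dots, x(M), x(1)$ are joined by the reactions of $R$. For $j \in \{1,\dots,M-1\}$ we have $x(j+1) - x(j) = y_{j+1} - y_j$, which is the reaction vector of $y_j \to y_{j+1} \in \Re$, and since $\lambda_{y_j}(x(j)) > 0$ the transition rate satisfies $q_{x(j),x(j+1)} \ge \lambda_{y_j \to y_{j+1}}(x(j)) = \kappa_{y_j \to y_{j+1}}\lambda_{y_j}(x(j)) > 0$ by \eqref{eq:q}. Likewise $x(1) - x(M) = y_1 - y_M$ is the reaction vector of $y_M \to y_1$, and $\lambda_{y_M}(x(M)) > 0$ gives $q_{x(M),x(1)} > 0$. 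Hence $(x(1),x(2),\dots,x(M),x(1))$ is an active closed walk; this is exactly the mechanism already recorded in Remark \ref{rem:why cycle}.

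Finally, for distinct $m$ and $k$ I would read off a path from $x(k)$ to $x(m)$ by following this cycle: compose the active sub-walks $(x(k),x(k+1)),(x(k+1),x(k+2)),\dots$, passing from index $M$ back to index $1$ once if $k>m$, and stop at $x(m)$. Since a composition of active paths is active, this composite is an active path from $x + y_k - y_1$ to $x + y_m - y_1$, which is precisely the definition of reachability. There is no genuinely hard step here: the content is just that the hypothesis $x \ge y_1$ propagates along the induced walk and keeps every reaction of the cycle available at the relevant state; the only care needed is the sign bookkeeping $x(j) \ge y_j$ and the wrap-around case $k > m$, handled by traversing the cycle once.
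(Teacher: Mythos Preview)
Your proof is correct and follows essentially the same approach as the paper: both observe that $x\ge y_1$ forces $x+y_j-y_1\ge y_j$, so each reaction $y_j\to y_{j+1}$ of the cycle is available at the corresponding state, and then iterate along the cycle (wrapping around if $k>m$) to obtain the required active path. Your write-up is simply a more explicit version of the paper's one-line argument.
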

\begin{proof}
   Note that $\lambda_{y_k\to y_{k+1}}(x+y_k-y_1)>0$ since $x+y_k-y_1>y_k$. Hence $x+y_{k+1}-y_1$ is reachable from $x+y_k-y_1$. By repeating this process, we can show that $x+y_m-y_1$ is reachable from $x+y_k-y_1$.  
\end{proof}
 We can notice that Lemma \ref{lem:cycle makes active paths} is valid in cases where either $m>k$ or $m<k$, as a result of the cyclic form of the reactions in $R$. Now, by using Lemma \ref{lem:cycle makes active paths}, we can show a property of weak reversibility in finding an active path.
 \begin{lem}\label{lem:weak rev}
     Let $(\Sp,\C,\Re)$ be a weakly reversible reaction network. For two state $z$ and $w$, if $w$ is reachable from $z$, then $z$ is also reachable from $w$. 
 \end{lem}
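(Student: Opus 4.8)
The plan is to reduce the statement to reversing a single transition and then to chain the reversed transitions along the given path. Suppose $w$ is reachable from $z$, so that there is an active path $\gamma = (z = z(1), z(2), \dots, z(N) = w)$; for each $i$ there is a reaction $y_i \to y'_i \in \Re$ with $z(i+1) = z(i) + y'_i - y_i$ and $\lambda_{y_i \to y'_i}(z(i)) > 0$, equivalently $z(i) \ge y_i$. Since the composition of active paths is active, and hence reachability is transitive, it suffices to show that $z(i)$ is reachable from $z(i+1)$ for each $i \in \{1, \dots, N-1\}$: composing these reversed steps produces an active path from $w = z(N)$ back to $z = z(1)$. The degenerate case $N = 1$ (i.e. $z = w$) is trivial.

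Next I would reverse a single step. Fix $i$ and write $a := z(i)$ and $b := z(i+1) = a + y'_i - y_i$, with $a \ge y_i$. By weak reversibility (Definition \ref{def:WR}), the reaction $y_i \to y'_i$ lies on a cycle in $\Re$: there are complexes $\tilde y_1, \dots, \tilde y_M$ with $\tilde y_1 = y_i$, $\tilde y_2 = y'_i$, and $(\tilde y_1 \to \tilde y_2, \tilde y_2 \to \tilde y_3, \dots, \tilde y_{M-1} \to \tilde y_M, \tilde y_M \to \tilde y_1) \subseteq \Re$. Applying Lemma \ref{lem:cycle makes active paths} with $x := a$ (note $a \ge y_i = \tilde y_1$, which is exactly the positivity $\lambda_{y_i \to y'_i}(z(i)) > 0$ guaranteed by activeness of $\gamma$) and this cycle, we obtain that $a + \tilde y_k - \tilde y_1$ is reachable from $a + \tilde y_{k'} - \tilde y_1$ for any two distinct indices $k, k'$. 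Taking the indices to be $1$ and $2$, this says that $a = a + \tilde y_1 - \tilde y_1$ is reachable from $b = a + \tilde y_2 - \tilde y_1$, which is the desired single-step reversal.

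Finally I would assemble the pieces: the reversed single steps give active paths from $z(i+1)$ to $z(i)$ for $i = N-1, N-2, \dots, 1$, and composing them in that order yields an active path from $w$ to $z$, so $z$ is reachable from $w$. I do not anticipate any serious obstacle; the only points requiring a little care are keeping the direction of composition straight and matching the hypothesis $a \ge \tilde y_1$ of Lemma \ref{lem:cycle makes active paths} with the activeness of the original path $\gamma$.
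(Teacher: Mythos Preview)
Your proposal is correct and follows essentially the same approach as the paper: reduce to reversing a single transition using the cycle guaranteed by weak reversibility together with Lemma~\ref{lem:cycle makes active paths}, then chain the reversed steps along the original path. Your write-up is in fact slightly more explicit than the paper's in identifying the indices $m=1$, $k=2$ and the base point $x=z(i)$ when invoking Lemma~\ref{lem:cycle makes active paths}.
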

\begin{proof}
    As $w$ is reachable from $z$, there exists an active path $\gamma=(z,\dots,w'',w',w)$. Let $y\to y' \in \Re$ be a reaction such that $w=w'+(y'-y)$ and $\lambda_{y\to y'}(w')>0$ (i.e. $w'>y$). By Definition \ref{def:WR}, there exist complexes $ y_i$ such that $( y_1\to  y_2, y_2\to  y_3,\dots,  y_{M-1}\to  y_{M},  y_{M}\to  y_1) \subseteq \Re$, where $ y_1=y$ and $ y_2=y'$. Then by Lemma \ref{lem:cycle makes active paths}, $w'$ is reachable from $w$ regarding $m=1, k=2, w'=x=x+y_m-y_1$ and $w=x+y_k-y_1$. By using the same argument, we can show that $w''$ is reachable from $w'$ and ultimately $w''$ is reachable from $w$. Repeating this, we can show that $z$ is reachable from $w$.
\end{proof}

\begin{lem}\label{lem: null space}
    Let $A$ be a $n \times m$ an integer-valued matrix. Then the null space of $A$ has integer valued basis. 
\end{lem}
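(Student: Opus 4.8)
The plan is to reduce $A$ to Smith normal form over $\mathbb Z$ and read off the basis from the transforming matrices. Recall that there exist unimodular integer matrices $U \in \mathrm{GL}_n(\mathbb Z)$ and $V \in \mathrm{GL}_m(\mathbb Z)$ (matrices with determinant $\pm 1$, so that $U^{-1}$ and $V^{-1}$ are again integer matrices) such that $UAV = D$, where $D$ is the $n \times m$ matrix whose only nonzero entries are $d_1, \dots, d_r$ on the main diagonal, with $r = \mathrm{rank}(A)$. The first step is to establish (or simply cite from the theory of modules over a principal ideal domain) this normal form; this is the only nontrivial ingredient, and if one wishes to keep the argument self-contained it can be derived by repeated integer row and column operations in Euclidean-algorithm fashion, each of which is left or right multiplication by an elementary unimodular matrix.

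Next I would exhibit the basis explicitly. Let $e_1, \dots, e_m$ be the standard basis of $\mathbb Q^m$ and set $w_j := V e_{r+j}$ for $j = 1, \dots, m-r$. Each $w_j$ lies in $\mathbb Z^m$ since $V$ has integer entries, and $w_1, \dots, w_{m-r}$ are linearly independent because $V$ is invertible. Moreover $A w_j = A V e_{r+j} = U^{-1} D e_{r+j} = U^{-1} \cdot 0 = 0$, since the $(r+j)$-th column of $D$ vanishes, so each $w_j$ lies in the null space of $A$. To see that these vectors span the null space, suppose $x \in \mathbb Q^m$ satisfies $Ax = 0$. Then $D(V^{-1}x) = U A x = 0$, so $V^{-1}x$ is a rational combination of $e_{r+1}, \dots, e_m$; applying $V$ shows $x$ is the same rational combination of $w_1, \dots, w_{m-r}$. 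Since $\dim_{\mathbb Q} \ker A = m - r$, the $m-r$ linearly independent integer vectors $w_1, \dots, w_{m-r}$ form a basis of $\ker A$, which is exactly the claim.

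The main (indeed only) obstacle is justifying the Smith normal form; once it is in hand, the rest is bookkeeping. An alternative route that sidesteps the normal form is the lattice argument: $L := \ker A \cap \mathbb Z^m$ is a subgroup of the free abelian group $\mathbb Z^m$, hence itself free of finite rank; clearing denominators from any $\mathbb Q$-basis of $\ker A$ produces integer vectors in $L$, so $L$ spans $\ker A$ over $\mathbb Q$ and therefore has $\mathbb Z$-rank equal to $\dim_{\mathbb Q} \ker A$, whence any $\mathbb Z$-basis of $L$ is the desired integer-valued basis of $\ker A$. I would present the Smith normal form version for concreteness, noting that in the application (the proof of Lemma \ref{lem:new desired tier sequence}) only the existence of an integer spanning set of the kernel is actually used.
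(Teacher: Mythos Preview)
Your argument is correct, but it is considerably heavier than what the paper does. The paper's entire proof is a one-line observation: Gaussian elimination on $A$ uses only addition, subtraction, multiplication, and division, and since the entries of $A$ are integers these operations stay within $\mathbb Q$; hence the reduced system yields a basis of $\ker A$ with rational entries, and (implicitly) scaling each basis vector to clear denominators produces an integer-valued basis. No Smith normal form, no structure theorem for subgroups of $\mathbb Z^m$---just the closure of $\mathbb Q$ under field operations.

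Your Smith-normal-form route proves more: the vectors $w_1,\dots,w_{m-r}$ you construct are not merely an integer-valued $\mathbb Q$-basis of $\ker A$ but in fact a $\mathbb Z$-basis of the lattice $\ker A\cap\mathbb Z^m$. That extra strength is not needed in the application (Lemma~\ref{lem:new desired tier sequence} only requires integer vectors spanning the rational null space, so that the coefficients $(z_\ell)_{y\to y'}$ are integers), and the cost is having to invoke or rederive the normal form. The paper's approach buys brevity and avoids any nontrivial prerequisite; your approach buys a sharper conclusion and a cleaner, explicit description of the basis. Either is fine here, but if you present the Smith-normal-form version you may want to remark that the much simpler ``Gaussian elimination over $\mathbb Q$ then clear denominators'' already suffices for what is actually used.
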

\begin{proof}
    We only use addition, subtraction, multiplication and division to get the null space of $A$, which are closed operations in $\mathbb Q$.  
\end{proof}

\end{document}